\documentclass[12pt]{article}
\usepackage{amsthm,amsmath,amssymb}
\usepackage{fullpage}
\usepackage{booktabs,array}

\numberwithin{equation}{section}

\newtheorem{thm}{Theorem}[section]
\newtheorem{lem}[thm]{Lemma}
\newtheorem{prop}[thm]{Proposition}
\newtheorem{cor}[thm]{Corollary}
\newtheorem{conj}[thm]{Conjecture}
\newtheorem{cons}[thm]{Construction}
\newtheorem{obs}[thm]{Observation}
\newtheorem{defn}[thm]{Definition}
\theoremstyle{remark}
\newtheorem{rem}[thm]{Remark}
\theoremstyle{plain}

\newcommand{\Ex}{\operatorname{EX}}
\newcommand{\floor}[1]{\left\lfloor #1\right\rfloor}
\newcommand{\join}{\vee}

\begin{document}

	\title{A reduction principle for non-$r$-partite spectral extremal problems, with a complete multipartite classification}
	\author{Suil O\thanks{Department of Applied Mathematics and Statistics, The State University of New York, Korea, Incheon, 21985, suil.o@sunykorea.ac.kr. Research supported by the National Research Foundation of Korea (NRF) grant funded by the Korea government(MSIT) No. RS-2025-23523950.}\and
		Jiadong Wu\thanks{Department of Mathematics, Shanghai University, Shanghai 200444, P.R. China, 1753381890@qq.com Research supported by the China Scholarship Council.}
	}
	
	\maketitle
	
	\begin{abstract}
		\noindent
		A graph is non-$r$-partite if its chromatic number exceeds $r$. For an edge-color-critical graph $F$ with $\chi(F)=r+1$, let $\mathrm{ex}_{r+1,\rho}(n,F)$ be the maximum adjacency spectral radius among non-$r$-partite $F$-free graphs of order $n$, and let $\mathrm{EX}_{r+1,\rho}(n,F)$ and $\mathrm{EX}_{r+1}(n,F)$ be the families of such graphs attaining, respectively, this maximum spectral radius and the maximum number of edges $\mathrm{ex}_{r+1}(n,F)$. Fang and Lin conjectured that $\mathrm{EX}_{r+1,\rho}(n,F)\subseteq\mathrm{EX}_{r+1}(n,F)$ for every such $F$ and all large $n$.
		
		We prove a reduction principle: if $F$ is \emph{$s$-embeddable} and $\mathrm{ex}_{r+1}(n,F)=|E(T_{n,r})|-\lfloor n/r\rfloor+2(s-1)$, where $T_{n,r}$ is the Tur\'an graph, then the inclusion holds and, moreover, the spectral extremal graph is unique. The reduction replaces the spectral problem by an edge-counting one, and its proof rests on a direct comparison of secular functions together with a second-order residual refinement of the Rayleigh principle.
		
		We then determine the spectral extremal graphs for all edge-color-critical complete multipartite forbidden graphs. For $F=K_{1,1,t_3,\ldots,t_{r+1}}$ with $t_3,\ldots,t_{r+1}\ge 2$ we show
		\[
		\mathrm{ex}_{r+1}(n,F)=|E(T_{n,r})|-\Bigl\lfloor\frac nr\Bigr\rfloor+2(t_{\min}-1),
		\qquad t_{\min}:=\min\{t_3,\ldots,t_{r+1}\},
		\]
		for all sufficiently large $n$, and we identify the unique spectral extremal graph; in particular $\mathrm{EX}_{r+1,\rho}(n,F)\subseteq\mathrm{EX}_{r+1}(n,F)$. The endpoint $t_3=1$ lies outside the embeddability framework and is treated by a separate argument: for $F=K_{1,1,1,t_4,\ldots,t_{r+1}}$ with $r\ge3$, the unique non-$r$-partite spectral extremal graph is $Y_r(n)$, obtained through a saturation reduction followed by the spectral refinement of Tur\'an's theorem. The complete graph $K_{r+1}$ and the complete split graph $B_{r,q}$ arise as special cases of this endpoint.
		
		\medskip\noindent
		\textbf{Keywords:} spectral radius; non-$r$-partite graph; edge-color-critical graph; Tur\'an number; complete multipartite graph
		
		\medskip\noindent
		\textbf{AMS subject classification 2020:} 05C50, 05C35
	\end{abstract}
	
	\section{Introduction}
	
	All graphs considered here are simple. For a graph $G$ we write $\rho(G)$ for the spectral radius of its adjacency matrix and $\chi(G)$ for its chromatic number, and we denote by $T_{n,r}$ the Tur\'an graph, the balanced complete $r$-partite graph of order $n$, with $|E(T_{n,r})|$ edges. A graph $F$ is \emph{edge-color-critical} if it has an edge whose removal lowers its chromatic number; throughout, $F$ is edge-color-critical with $\chi(F)=r+1$ for some integer $r\ge 2$.
	
	The Tur\'an number $\mathrm{ex}(n,F)$ is the maximum number of edges in an $F$-free graph of order $n$. For edge-color-critical $F$ with $\chi(F)=r+1$, Simonovits proved that $\mathrm{ex}(n,F)=|E(T_{n,r})|$ for all large $n$, with $T_{n,r}$ the unique extremal graph (Theorem~\ref{TS}). The spectral counterpart, maximizing $\rho(G)$ over $F$-free graphs of order $n$, was settled by Wang, Kang and Xue~\cite{WKX}: whenever $\mathrm{ex}(n,F)=|E(T_{n,r})|+O(1)$, every spectral extremal graph is also edge extremal.
	
	Since $T_{n,r}$ is itself $r$-partite, both problems change character when the host graph is required to be \emph{non-$r$-partite}, that is, $\chi(G)>r$. Let $\mathrm{ex}_{r+1}(n,F)$ and $\mathrm{ex}_{r+1,\rho}(n,F)$ be the maximum number of edges and the maximum spectral radius over non-$r$-partite $F$-free graphs of order $n$, and let $\mathrm{EX}_{r+1}(n,F)$ and $\mathrm{EX}_{r+1,\rho}(n,F)$ be the corresponding families of extremal graphs.
	
	The non-$r$-partite spectral problem first emerged in the triangle-free setting. Lin, Ning and Wu~\cite{LNW} determined the sharp order-dependent bound and the unique extremal graph for non-bipartite triangle-free graphs, while Li and Peng~\cite{LPO} developed related refinements for non-bipartite graphs forbidding short odd cycles. Li and Peng~\cite{LP} subsequently extended the spectral Tur\'an refinement to non-$r$-partite $K_{r+1}$-free graphs, identifying $Y_r(n)$ as the unique maximizer. More recent work has treated several color-critical families: ordinary books in the non-bipartite case were settled by Liu and Miao~\cite{LM}; Fang and Lin~\cite{FZ} established the spectral-to-edge extremal inclusion for theta graphs; and complete split graphs, or generalized books, were handled by Wang, Chen and Zhang~\cite{WCZ} and by Yu and Li~\cite{YL}. Beyond the color-critical setting, Zou, Feng and Li~\cite{ZFL} determined spectral extremal non-bipartite graphs under families of forbidden short odd cycles. Together these results indicate that the non-$r$-partite constraint typically produces a stable near-Tur\'an structure with a bounded local obstruction, while the precise extremal graph remains sensitive to the forbidden family.
	
	Fang and Lin~\cite{FZ} proposed the following.
	
	\begin{conj}[Fang and Lin~\cite{FZ}]\label{conj:FZ}
		Let $F$ be an edge-color-critical graph with $\chi(F)=r+1\ge 3$. For sufficiently large $n$,
		\[
		\mathrm{EX}_{r+1,\rho}(n,F)\subseteq \mathrm{EX}_{r+1}(n,F).
		\]
	\end{conj}
	
	Our first result confirms this conjecture under a mild hypothesis on the edge extremal number, together with a structural condition on $F$ formulated in Section~\ref{sec:def}, namely that $F$ be \emph{$s$-embeddable} for some $s\ge2$: a vertex meeting a near-Tur\'an host in at least $s$ places of one part, one place of a second, and fully off the rest already forces a copy of $F$, while the balanced graph throttled to $s-1$ neighbors in two parts is $F$-free. This holds, with $s=t_{\min}$, for every complete multipartite graph $K_{1,1,t_3,\ldots,t_{r+1}}$ with $t_3,\ldots,t_{r+1}\ge2$.
	
	\begin{thm}\label{main}
		Let $F$ be an edge-color-critical graph with $\chi(F)=r+1\ge 3$ that is $s$-embeddable for some $s=s(F)\ge2$ (Definition~\ref{def:embeddable}). If
		\[
		\mathrm{ex}_{r+1}(n,F)=|E(T_{n,r})|-\Bigl\lfloor \tfrac{n}{r}\Bigr\rfloor+2(s-1)
		\]
		for all sufficiently large $n$, then $\mathrm{EX}_{r+1,\rho}(n,F)\subseteq \mathrm{EX}_{r+1}(n,F)$ for all sufficiently large $n$.
	\end{thm}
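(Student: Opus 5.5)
We would follow the standard stability-then-refinement scheme for spectral Turán problems, organized so that the spectral question reduces to the edge-counting hypothesis. Let $G\in\mathrm{EX}_{r+1,\rho}(n,F)$ with Perron vector $x$ normalized so that $\max_v x_v=1$.

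\textbf{Lower bound.} First we bound $\rho(G)$ from below by an explicit non-$r$-partite $F$-free candidate. The natural choice is the ``throttled'' graph guaranteed $F$-free by Definition~\ref{def:embeddable}. Start from $T_{n,r}$ and take a vertex $v$ of a smallest part. Let $v$ keep only $s-1$ neighbours in each of two other parts and none elsewhere; equivalently, $v$ is joined to a bounded set while some structure forces $\chi>r$. Its edge count is exactly $|E(T_{n,r})|-\lfloor n/r\rfloor+2(s-1)$. Call this graph $H_n$. Then $\rho(G)\ge\rho(H_n)\ge \rho(T_{n,r})-O(1/n)$, as one sees by a Rayleigh quotient with the Turán eigenvector.

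\textbf{Stability.} Combining this lower bound with the spectral stability theorem (Nikiforov's version of Erdős–Simonovits, assumed earlier), $G$ is $o(n^2)$-close to $T_{n,r}$. The usual cleaning then applies. Partition $V(G)=V_1\cup\cdots\cup V_r$ maximizing crossing edges, and let $W$ be the vertices with $\Omega(n)$ neighbours in their own part. The eigen-equation $\rho x_v=\sum_{u\sim v}x_u$ together with $F$-freeness gives $|W|=O(1)$, and $x_v\ge 1-o(1)$ for all but $O(1)$ vertices. Since $G$ is non-$r$-partite, some obstruction remains. The $s$-embeddability hypothesis is used exactly here. Any vertex that has $\ge s$ neighbours in one part, $\ge1$ in a second, and is nearly complete to the remaining parts creates $F$. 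So every ``bad'' vertex, that is, every vertex whose deletion or recolouring would help, must lose roughly $n/r$ neighbours. We then argue that only one such vertex $v_0$ exists, because two would cost $2n/r$ edges and hence $\Omega(1)$ in $\rho$. Finally we show $G-v_0$ is complete $r$-partite and balanced up to one vertex.

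\textbf{Secular-function comparison (main obstacle).} At this point $G$ lies in a finite, explicitly parametrized family. The family consists of a near-balanced complete $r$-partite graph $G-v_0$ plus a vertex $v_0$ with $d_i$ neighbours in part $i$, and possibly a few missing edges inside the blow-up. We must show that $\rho$ is maximized exactly when $e(G)$ is, i.e.\ at $e(G)=\mathrm{ex}_{r+1}(n,F)$. Using the block structure (equitable partition), $\rho(G)$ is the largest root of a secular equation $1=\sum_i d_i\,\phi_i(\lambda)$ plus correction terms. Here $\phi_i$ comes from the quotient matrix of the multipartite part. Expanding to second order in $1/n$:
\begin{itemize}
\item the first-order term is governed by the total number of edges;
\item the residual term is $\sum_{u\sim v_0}(x_u-\bar x)$ together with the effect of part sizes.
\end{itemize}
The residual should be $O(1/n^2)$ and sign-controlled. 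We expect this refinement to be the hardest step. Whenever $e(G)<\mathrm{ex}_{r+1}(n,F)$, the deficit costs $\gtrsim 1/n^{2}$ in $\rho$ at leading order, and we must beat the possible gain from unbalancing part sizes or placing $v_0$'s neighbours at high-weight vertices.

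We would first try a direct Rayleigh comparison. Take the Perron vector of $G$ and perform a local edge switch: add the missing edges at $v_0$ up to the embeddability threshold and rebalance. Then bound $\rho(G')-\rho(G)\ge x^{T}(A'-A)x/x^{T}x - O(\|x'-x\|^2)$ using the second-order residual estimate. This shows $G$ is not extremal unless $e(G)=\mathrm{ex}_{r+1}(n,F)$, which gives the inclusion $\mathrm{EX}_{r+1,\rho}(n,F)\subseteq\mathrm{EX}_{r+1}(n,F)$.
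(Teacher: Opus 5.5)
Your outline has the right overall shape (stability, cleaning, one exceptional vertex, an explicit family, a secular-function comparison). However, the step that takes you from the cleaned structure into that explicit family is missing, and the reason you give for it is not valid.

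Condition~$(E)$ produces $F$ only when the $s$ neighbours of the exceptional vertex $u$ in $Z_1$, its neighbour in $Z_2$, and the other parts all sit inside a \emph{complete} $r$-partite subgraph. So $(E)$ does not rule out two kinds of attachment:
\begin{itemize}
\item $u$ having $h_1,h_2\ge s$ heavy neighbours in two parts while some cross edges between these neighbourhoods are missing (the paper's Embedding Lemma only yields that each $v\in P_1$ has at most $s-1$ neighbours in $P_2$);
\item $u$ attaching within a part only to low-degree vertices.
\end{itemize}
These configurations must be excluded spectrally, which is the content of the paper's Attachment Lemma. It combines a kernel comparison with the throttled graph $Y(T_1)$, the weight bound $x_u\le\mu(\frac{r-2}{r-1}+O(\eta))$, and the Temple-type bound $\lambda_1\ge q+\lVert\boldsymbol r\rVert^2/(\lambda_1-\lambda_N)$ in the cases where the first-order kernel can be nonpositive. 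The weight bound is indispensable at $h_1=h_2=s=2$, where the competitor ties the edge count exactly.

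Your proposed second-order estimate, $\rho(G')-\rho(G)\ge \boldsymbol x^{\mathsf T}(A'-A)\boldsymbol x/\boldsymbol x^{\mathsf T}\boldsymbol x-O(\lVert \boldsymbol x'-\boldsymbol x\rVert^2)$, is weaker than the plain Rayleigh bound. It can therefore never produce a gain where the first-order term vanishes, which is exactly what is needed here.

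You also never use condition~$(T)$ to remove missing cross edges. In the paper, once $u$ has at most $s-1$ neighbours (all heavy) in each of two parts, $G$ is a spanning subgraph of the completed graph $Y_G^{\circ}$. That graph is $F$-free by $(T)$ and non-$r$-partite, so strict Perron--Frobenius monotonicity forces $G=Y_G^{\circ}$. Without this step your family still contains ``a few missing edges'', which destroys the equitable partition the secular equation relies on.

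Several other steps also need repair.
\begin{itemize}
\item Reducing to a single exceptional vertex because ``two would cost $2n/r$ edges and hence $\Omega(1)$ in $\rho$'' is an edge-count heuristic, not a spectral comparison. The paper instead takes $u$ of minimum Perron weight and re-attaches it to all clean vertices outside one part. This gives an $F$-free graph of larger spectral radius, which would be admissible unless $\chi(G-u)=r$.
\item In the balancing step, a direct Rayleigh comparison with the current Perron vector fails for exactly two moves: a full part exceeding a throttled part by at least two, and a throttled part exceeding a full part by one. There the first-order change is negative even though $\rho$ increases. The paper settles these by showing $\Phi(\rho(Y);\mathbf n')<0$ for the explicit secular function. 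The relevant scale there is $\Theta(1/n)$, not $1/n^2$.
\item Your lower-bound graph is misdescribed. The extra vertex must remain complete to $r-2$ parts; as written it has bounded degree, the graph is $r$-partite, and the edge count is wrong. Its spectral radius also lies $\Theta(1)$, not $O(1/n)$, below $\rho(T_{n,r})$. This is harmless, since stability only needs $\rho(G)\ge 2\,\mathrm{ex}_{r+1}(n,F)/n\ge(1-\frac1r)n-O(1)$.
\end{itemize}
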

	
	Theorem~\ref{main} reduces the spectral problem to a purely edge-counting one: once an $s$-embeddable graph has the prescribed correction $2(s-1)$ to $|E(T_{n,r})|-\lfloor n/r\rfloor$, every spectral extremal graph is forced to be edge extremal. Our second result carries out that count for the complete multipartite graphs with exactly two singleton parts. For $F=K_{1,1,t_3,\ldots,t_{r+1}}$ with $2\le t_3\le\cdots\le t_{r+1}$ we determine $\mathrm{ex}_{r+1}(n,F)$ exactly. The answer is governed by a single local parameter, the smallest non-singleton part size $t_{\min}=t_3$: a non-$r$-partite graph must meet every part of its near-Tur\'an structure, and avoiding $F$ then costs exactly two throttled parts rather than one, so that
	\[
	\mathrm{ex}_{r+1}(n,F)=|E(T_{n,r})|-\Bigl\lfloor\tfrac nr\Bigr\rfloor+2(t_{\min}-1)
	\]
	for all large $n$ (Theorem~\ref{thm:multi}). Combined with Theorem~\ref{main}, this determines $\mathrm{EX}_{r+1,\rho}(n,F)$ throughout the range $t_3\ge2$. The edge count itself was obtained independently and concurrently by Wang and Zhao~\cite{WCG}, who also characterize the edge extremal graphs; we say more about this in Section~\ref{sec:remark}, and give a self-contained derivation of the edge count in Section~\ref{sec:multipartite}.
	
	The endpoint $t_3=1$ has a different structure. The formal throttled construction for $s=t_3$ has no neighbors in two parts and is therefore $r$-partite. For $r\ge3$ and
		\[
		F=K_{1,1,1,t_4,\ldots,t_{r+1}},
		\]
		we prove instead that the unique spectral extremal graph is $Y_r(n)$, the graph introduced by Li and Peng~\cite{LP}. In a convenient equivalent description, one starts with $T_{n-1,r}$, chooses one vertex in each of its two smallest parts, deletes the edge between them, and adds a new vertex adjacent to those two vertices and to every vertex in the remaining $r-2$ parts. The proof, given in Section~\ref{sec:t3-one}, reduces an arbitrary extremal graph to a non-$r$-partite $K_{r+1}$-free graph and then applies the Li--Peng comparison. This result is a classification of the spectral extremal graph; no edge-extremal assertion for this endpoint is used.
	
	The spectral reduction turns on two ingredients that may be of use elsewhere: a direct comparison of secular functions, which locates the extremal configuration at the $\Theta(1/n)$ scale on which a single-vertex Rayleigh estimate is inconclusive; and a second-order residual refinement of the Rayleigh principle, of Temple type, which extracts a strictly positive spectral gain in the boundary configurations where the first-order gain vanishes.
	
	For undefined terms in graph theory, see West~\cite{W}; for the basics of spectral graph theory, see Brouwer and Haemers~\cite{BH} or Godsil and Royle~\cite{GR}.
	
	\section{Definitions and Tools}\label{sec:def}
	
	Let $F$ be a graph with chromatic number $\chi(F)=r+1$.  Denote 
	$\operatorname{ex}_{r+1}(n,F)$ as the maximum number of edges among all
	non-$r$-partite $F$-free graphs of order $n$. Let
	$\operatorname{EX}_{ r+1 }(n,F)$ be the set of all such non-$r$-partite
	$F$-free graphs of order $n$ that have 
	$\operatorname{ex}_{r+1}(n,F)$ edges. We denote by
	$\operatorname{ex}_{r+1,\rho}(n,F)$ the maximum spectral radius among all
	non-$r$-partite $F$-free graphs of order $n$, and by
	$\operatorname{EX}_{ r+1,\rho }(n,F)$ the set of all such non-$r$-partite
	$F$-free graphs of order $n$ that have a spectral radius equal to
	$\operatorname{ex}_{r+1,\rho}(n,F)$.

	\subsection{Tools from the literature}
	
	Three results from the literature enter the proof. The first is Simonovits's extension of Tur\'an's theorem to edge-color-critical graphs; we invoke it in the cleaning of Section~\ref{sec:def} to bound the set of vertices that carry internal edges or many missing crossing edges.
	
	\begin{thm}[Simonovits~\cite{S}]\label{TS}
		Let $F$ be an edge-color-critical graph with $\chi(F)=r+1$. For sufficiently large $n$,
		\[
		\operatorname{ex}(n,F)=|E(T_{n,r})|,
		\]
		and $T_{n,r}$ is the unique extremal graph.
	\end{thm}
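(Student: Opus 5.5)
This is Simonovits's theorem, quoted from the literature, so the plan below reconstructs the classical argument. It has three stages: the Erdős--Stone--Simonovits theorem, the stability theorem, and a cleaning argument exploiting the critical edge.

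Let $G$ be an $F$-free graph on $n$ vertices with $e(G)\ge |E(T_{n,r})|$, and set $f=|V(F)|$. By Erdős--Stone, $e(G)\le (1-1/r+o(1))n^2/2$, so $G$ is nearly extremal. The Erdős--Simonovits stability theorem then gives a partition $V_1\cup\dots\cup V_r$ of $V(G)$ with $o(n^2)$ edges inside parts. We choose this partition to minimize the number of internal edges, so every vertex has at least as many neighbors outside its part as inside. Next we delete iteratively any vertex of degree below $(1-1/r-\varepsilon)n$. A standard counting argument shows only $o(n)$ vertices are removed, since otherwise the edge count would exceed $|E(T_{n',r})|$ by more than $\varepsilon n'^2$ on the remaining $n'$ vertices, contradicting Erdős--Stone. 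In the cleaned graph, every part has size $n/r+o(n)$ and every vertex has degree at least $(1-1/r-\varepsilon)n$.

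The key step uses the critical edge. Since removing an edge $xy$ from $F$ leaves an $r$-colorable graph, $F\subseteq K_{r}(f,\dots,f)$ plus one edge inside a part. Suppose some part $V_i$ of the cleaned graph contains an edge $uv$. By minimality of the partition and the degree condition, $u$ and $v$ each have at least $cn$ neighbors in every other part $V_j$. Moreover, $u$ and $v$ have almost full degree into the other parts, so they have $\Omega(n)$ common neighbors in each $V_j$, $j\ne i$. Within these common neighborhoods, the cleaned graph is nearly complete $(r-1)$-partite. By a greedy or Kővári--Sós--Turán-type embedding, we find a $K_{r-1}(f,\dots,f)$ inside them. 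Add $f-2$ further vertices of $V_i$ adjacent to all of it, which exist by degree counting. Together with $u,v$ this yields $F$, a contradiction. Hence the cleaned graph is $r$-partite, and so has at most $|E(T_{n',r})|$ edges.

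Finally, we reinsert the deleted vertices. Each deleted vertex contributed fewer than $(1-1/r-\varepsilon)n$ edges, while Turán growth requires about $(1-1/r)n$ per vertex. So if anything was deleted, $e(G)<|E(T_{n,r})|$, a contradiction. Thus nothing is deleted, $G$ is $r$-partite, and equality forces $G=T_{n,r}$ by uniqueness in Turán's theorem.

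The main obstacle is the middle step: guaranteeing that both endpoints of an internal edge have linear common neighborhoods in every other part, robustly enough to embed the blown-up structure. I would handle this first with the degree-cleaning, which bounds the missing cross edges at each vertex by $\varepsilon n$, before doing the embedding.
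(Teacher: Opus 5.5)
There is a genuine gap in your middle step. The paper does not prove this theorem; it quotes it from Simonovits. The nearest analogue is its own cleaning in Lemmas~\ref{L3}--\ref{cleanindep}, which runs the same stability scheme for spectral extremal graphs.

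\textbf{What works.} Your framework is the classical one: stability, a minimal-internal-edge partition, removal of low-degree vertices, and embedding along the critical edge. Its outer parts are sound. The deletion count is fine. The reinsertion step is fine: at order $n-j$, a deleted vertex has at least $\varepsilon(n-j)-1$ fewer edges than the Tur\'an increment $|E(T_{n-j,r})|-|E(T_{n-j-1,r})|\ge(1-\tfrac1r)(n-j)-1$. Uniqueness via Tur\'an's theorem is fine.

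\textbf{The gap.} Your proposed remedy is false: minimum degree $(1-\tfrac1r-\varepsilon)n$ does \emph{not} bound the missing cross edges at a vertex by $\varepsilon n$. For $u\in V_i$ that number is $(n-|V_i|)-d(u)+d_{V_i}(u)\le\varepsilon n+o(n)+d_{V_i}(u)$, and nothing you invoke keeps $d_{V_i}(u)$ small. Concretely, take $u\in V_i$ adjacent to about half of $V_i$, to one half of $V_j$, and to all other parts. Then $u$ has degree about $(1-\tfrac1r)n$ and satisfies $d_{V_i}(u)\le d_{V_l}(u)$ for every $l\ne i$. A second such vertex $v$, adjacent to $u$ and to the other half of $V_j$, gives an internal edge $uv$ with $N(u)\cap N(v)\cap V_j=\varnothing$.

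The degree condition and the minimality of the partition do not exclude this configuration. So your claim that $u,v$ have almost full degree into the other parts is unjustified, and with it the embedding of $K_{r-1}(f,\dots,f)$ into their common neighbourhood. The same issue undermines ``nearly complete $(r-1)$-partite'' unless such vertices are kept out of the pools.

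Also, ``at least as many neighbours outside as inside'' is too weak. You need $d_{V_i}(u)\le d_{V_l}(u)$ for \emph{each} $l\ne i$, i.e.\ $d_{V_i}(u)\le d(u)/r$, which the minimal partition does give. From $d_{V_i}(u)\le d(u)/2$ alone, the lower bound on $d_{V_j}(u)$ is vacuous for $r\ge3$.

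\textbf{The fix.} What is missing is a separate treatment of vertices with large internal degree.
\begin{enumerate}
\item Let $W$ be the set of vertices with at least $\eta n$ neighbours in their own part, where $\varepsilon\ll\eta$. Since there are $o(n^2)$ internal edges, $|W|=o(n)$. Every remaining vertex outside $W$ misses at most $(\varepsilon+\eta)n+o(n)$ cross vertices.
\item If an internal edge has both ends outside $W$, your embedding works as written, provided every further vertex of $F$ is drawn from outside $W$.
\item If $W\ne\varnothing$, take $u\in W$ and pair it with some $v\in N_{V_i}(u)\setminus W$, rather than an arbitrary internal neighbour. Such $v$ exists because $d_{V_i}(u)\ge\eta n>|W|$.
\item Then $d_{V_i}(u)\le d(u)/r$ gives $d_{V_j}(u)\ge(\tfrac1{r^2}-O(\varepsilon))n$ for each $j\ne i$. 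Since $v$ is almost complete to the other parts, $u$ and $v$ have at least $(\tfrac1{r^2}-O(\eta))n$ common neighbours outside $W$ in every $V_j$.
\item Each further embedded vertex excludes only $O(\eta n)$ candidates, so the greedy embedding succeeds once $\eta\ll 1/(r^2f)$.
\end{enumerate}
This is precisely the two-case argument of Lemma~\ref{cleanindep}, built from the sets $W_i$, Lemma~\ref{Wbound}, inequality~\eqref{eq:e4} and estimate~\eqref{eq:e7}. Because you delete vertices after fixing the partition, $d_{V_i}(u)\le d_{V_l}(u)$ holds in the cleaned graph only up to the $o(n)$ deleted vertices, which is harmless.
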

	
	The second is the spectral stability theorem of Desai et al.~\cite{DKL}, the spectral counterpart of the Erd\H{o}s--Simonovits stability theorem~\cite{E1,E2,S}. Combined with the spectral lower bound of Lemma~\ref{L1}, it places a spectral extremal $G$ within $\varepsilon n^2$ edges of $T_{n,r}$; this is the starting point for the structural analysis below.
	
	\begin{thm}[Desai et al.~\cite{DKL}]\label{dkl}
		Let $F$ be a graph with chromatic number $\chi(F)=r+1$. For every $\varepsilon>0$, there exist $\delta>0$ and $n_0$ such that if $G$ is an $F$-free graph on $n\ge n_0$ vertices with $\rho(G)\ge (1-\frac{1}{r}-\delta)n$, then $G$ can be obtained from $T_{n,r}$ by adding and deleting at most $\varepsilon n^2$ edges.
	\end{thm}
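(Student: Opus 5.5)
The plan is to reduce the spectral hypothesis to an edge-density hypothesis and then apply the classical Erd\H{o}s--Simonovits stability theorem~\cite{E1,E2,S}. Three standard ingredients do the work: the graph removal lemma, subadditivity of the spectral radius, and Nikiforov's bound $\rho(H)^2\le 2\bigl(1-\tfrac1r\bigr)e(H)$ for $K_{r+1}$-free $H$.

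\textbf{Step 1 (removal).} Let $h=|V(F)|$. Since $\chi(F)=r+1$, $F$ is a subgraph of the blow-up $K_{r+1}(h)$. The standard supersaturation argument, iterated Kővári--Sós--Turán inside cliques, shows the following: if a graph on $n$ vertices has at least $\eta n^{r+1}$ copies of $K_{r+1}$ and $n$ is large, it contains $K_{r+1}(h)\supseteq F$. Hence an $F$-free $G$ has $o(n^{r+1})$ copies of $K_{r+1}$. By the removal lemma, for any $\gamma>0$ and large $n$ there is a $K_{r+1}$-free spanning subgraph $G'\subseteq G$ with $e(G)-e(G')\le\gamma n^2$.

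\textbf{Step 2 (spectral transfer).} Write $D=G-E(G')$. Subadditivity of $\rho$ and $\rho(D)\le\sqrt{2e(D)}$ give
\[
\rho(G')\ \ge\ \rho(G)-\sqrt{2\gamma}\,n\ \ge\ \Bigl(1-\tfrac1r-\delta-\sqrt{2\gamma}\Bigr)n .
\]
Applying Nikiforov's inequality to the $K_{r+1}$-free graph $G'$ yields
\[
e(G')\ \ge\ \frac{r}{2(r-1)}\rho(G')^2\ \ge\ \Bigl(1-\tfrac1r\Bigr)\frac{n^2}{2}-c_r\bigl(\delta+\sqrt{\gamma}\bigr)n^2
\]
for a constant $c_r$ depending only on $r$.

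\textbf{Step 3 (edge stability).} Erd\H{o}s--Simonovits stability for $K_{r+1}$ states: for every $\varepsilon'>0$ there is $\beta>0$ such that every $K_{r+1}$-free graph with at least $(1-\tfrac1r-\beta)\tfrac{n^2}2$ edges becomes $T_{n,r}$ after editing at most $\varepsilon' n^2$ edges. Take $\varepsilon'=\varepsilon/2$, and choose $\gamma$ and $\delta$ small enough that $c_r(\delta+\sqrt\gamma)\le\beta/2$ and $\gamma\le\varepsilon/2$. Then $G'$ is within $\varepsilon n^2/2$ edits of $T_{n,r}$. Since $G$ differs from $G'$ by at most $\gamma n^2\le\varepsilon n^2/2$ edges, $G$ is within $\varepsilon n^2$ edits of $T_{n,r}$.

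The main obstacle is Step 1, the passage from $F$-free to nearly $K_{r+1}$-free. It relies on the (heavy but standard) removal lemma, together with the supersaturation fact that many $K_{r+1}$'s force a blow-up. Everything after that is linear-algebraic bookkeeping, and the only care needed is in choosing $\gamma,\delta$ after $\varepsilon$.
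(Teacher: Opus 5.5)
The paper gives no proof of this statement. It is quoted from Desai et al.\ and used as a black box, in Lemma~\ref{L3} (fed by the lower bound of Lemma~\ref{L1}) and again in Section~\ref{sec:t3-one}. So there is no in-paper argument to compare against. Judged on its own, your argument is correct, and each link holds as you use it:
\begin{itemize}
\item $F\subseteq K_{r+1}(h)$ via an $(r+1)$-coloring of $F$.
\item Erd\H{o}s's hypergraph/supersaturation theorem: a positive density of copies of $K_{r+1}$ forces $K_{r+1}(h)$. Hence an $F$-free graph has $o(n^{r+1})$ copies of $K_{r+1}$.
\item The $K_{r+1}$ removal lemma.
\item Weyl subadditivity $\rho(G)\le\rho(G')+\rho(D)$, together with $\rho(D)^2\le\operatorname{tr}A(D)^2=2e(D)$.
\item Nikiforov's inequality $\rho^2\le 2(1-1/\omega)e$.
\item Erd\H{o}s--Simonovits stability for $K_{r+1}$.
\end{itemize}

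You fix $\varepsilon$, then $\beta$, then $\gamma$ and $\delta$, then the removal-lemma density threshold, and finally $n_0$; that is the right order of quantifiers. The constant in Step~2 can be taken to be $c_r=\sqrt2$. Indeed, with $\alpha=1-\frac1r$ and $x=\delta+\sqrt{2\gamma}<\alpha$, one has $\frac{r}{2(r-1)}\bigl[(\alpha-x)^2-\alpha^2\bigr]\ge-x$, and $x\le\sqrt2(\delta+\sqrt\gamma)$.

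This is the standard way spectral stability is derived from edge stability. Its only drawback is that the removal lemma makes $\delta$ and $n_0$ depend on $\varepsilon$ in a tower-type way. That is harmless here, because the paper only needs existence for a single fixed small $\epsilon$.
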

	
	The third result is the spectral refinement of Tur\'an's theorem for non-$r$-partite graphs. We first fix the extremal graph without ambiguity. Let $T_1,\ldots,T_r$ be the parts of $T_{n,r}$, labeled so that $|T_1|\le\cdots\le|T_r|$. Choose distinct vertices $u,w\in T_r$ and a vertex $v\in T_1$. Starting with $T_{n,r}$, add the edge $uw$, delete all edges between $\{u,w\}$ and $T_1$, and then add the edges $uv$ and $wz$ for every $z\in T_1\setminus\{v\}$. Denote the resulting graph by $Y_r(n)$. Equivalently, remove $u$ from this description. The inherited $r$-partition has the part sizes of $T_{n-1,r}$; all its crossing edges are present except $vw$, the vertices $v$ and $w$ lie in its two smallest parts, and $u$ is adjacent to $v,w$ and to all vertices in the other $r-2$ parts.
		
		\begin{thm}[Li and Peng~\cite{LP}]\label{thm:Li-Peng}
			Let $r\ge2$ and let $X$ be an $n$-vertex non-$r$-partite $K_{r+1}$-free graph. Then
			\[
			\rho(X)\le\rho(Y_r(n)).
			\]
			Moreover, equality holds if and only if $X\cong Y_r(n)$.
		\end{thm}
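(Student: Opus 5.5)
This result is quoted from Li and Peng~\cite{LP}, but here is how I would prove it directly. Let $X$ be a non-$r$-partite $K_{r+1}$-free graph of order $n$ with $\rho(X)$ maximum, and let $\mathbf{x}$ be its positive Perron vector. The aim is to show $X\cong Y_r(n)$.

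\textbf{Structure of $X$.} Since $Y_r(n)$ differs from $T_{n,r}$ in $O(n)$ edges, we have $\rho(X)\ge\rho(Y_r(n))\ge(1-\frac1r)n-O(1)$. Theorem~\ref{dkl} then places $X$ within $\varepsilon n^2$ edges of $T_{n,r}$. Take a max-$r$-cut $V_1,\dots,V_r$. The usual eigenvector cleaning then shows:
\begin{itemize}
\item the exceptional set $W$ (vertices incident to internal edges or missing many crossing edges) has bounded size;
\item every vertex outside $W$ has $x_v=(1+o(1))\max x$.
\end{itemize}

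\textbf{Reduction to one special vertex.} Non-$r$-partiteness forces at least one internal edge. Maximality and $K_{r+1}$-freeness should then let us shift all of the obstruction onto a single vertex $u$. Take $u\in V_r$ with an internal edge $uw$. For $u$ not to close a $K_{r+1}$, its neighbourhood must miss some part in a controlled way. A Kelmans-type switching argument, comparing $x$-weights, should show the optimum is: $u$ is complete to $V_2\cup\dots\cup V_{r-1}$, and in $V_1\cup\{w\}$ it has only neighbours $v,w$ with $vw\notin E$. Every other vertex outside $\{u\}$ is complete to the other parts. Any extra deviation loses $\Theta(n)$ in $\mathbf{x}^T A\mathbf{x}$ while gaining only $O(1)$.

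\textbf{Balancing the parts.} Once the structure is fixed, $X$ is determined by the part sizes of $X-u$. I would write the secular (characteristic) equation of the resulting equitable quotient matrix and compare part sizes. The comparison should show that balancing to $T_{n-1,r}$, with $v,w$ in the two smallest parts, strictly increases $\rho$. This gives uniqueness.

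\textbf{Main obstacle.} The hard step is the reduction to one special vertex. Configurations with two or more "bad" vertices, or with $u$ adjacent to a few vertices of $V_1$, differ from $Y_r(n)$ only at the $O(1/n)$ scale in $\rho$. First-order Rayleigh estimates cannot separate these cases. I would handle them with a second-order, Temple-type estimate comparing secular functions, as the paper indicates.
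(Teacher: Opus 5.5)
The paper gives no proof of this theorem. It is quoted from Li and Peng and used as a black box, most importantly in the last step of the proof of Theorem~\ref{thm:t3-one}, where it bounds $\rho(Y_{pq}(n_1,\ldots,n_r))$ by $\rho(Y_r(n))$. Your sketch therefore has to stand on its own, and it does not.

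\textbf{Scope.} The statement as quoted has no largeness assumption on $n$. Your route runs through Theorem~\ref{dkl} and $\varepsilon n^2$-cleaning, so even a completed version would only give the result for $n\ge n_0$.

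\textbf{Unsupported or incorrect steps.} The decisive steps are only asserted (``should show'', ``should then let us shift''), and several assertions are inaccurate.
\begin{itemize}
\item Standard cleaning gives an exceptional set of size $O(\eta n)$, not bounded size.
\item Taking $u$ to be an endpoint of an internal edge does not make $X-u$ $r$-partite when there are several bad vertices.
\item With $\max_v x_v=1$, it is false that every deviation costs $\Theta(n)$ in $\mathbf x^{\mathsf T}A\mathbf x$. For example, give $u$ a second neighbour $v'\in V_1$ and delete $v'w$. The result is still $K_{r+1}$-free and non-$r$-partite, has exactly as many edges as $Y_r(n)$, and is beaten only by the switching $uv'\mapsto v'w$, whose gain $2x_{v'}(x_w-x_u)$ is $\Theta(1)$.
\end{itemize}

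\textbf{The difficulty is in the wrong place.} The reduction to one special vertex is first-order.
\begin{itemize}
\item Take $u$ of minimum Perron entry. If $X-u$ were non-$r$-partite, re-attaching $u$ to the clean vertices of all parts but one gains $\Theta(n)$ in weight (Lemma~\ref{chiGu}, Remark~\ref{rem:perron-transfer}).
\item The saturation switching of Section~\ref{sec:t3-one} never decreases the Rayleigh quotient, because $x_w\ge x_u$ for every $w$.
\item The constant-order gap of Lemma~\ref{lem:t3-one-sparse-gap} forces $r-2$ classes of $N(u)$ to be large and of complete type.
\item $K_{r+1}$-freeness then leaves two defective singletons $u_p,u_q$ with $u_pu_q\notin E$, so $X\subseteq Y_{pq}(n_1,\ldots,n_r)$.
\end{itemize}
For $r\ge3$ and large $n$ this is exactly the argument of Section~\ref{sec:t3-one} with $t_4=\cdots=t_{r+1}=1$.

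What genuinely lives on the $\Theta(1/n)$ scale is the step you dispose of with ``the comparison should show''. You must prove that among all $Y_{pq}(n_1,\ldots,n_r)$ with near-balanced sizes, $\rho$ is uniquely maximized by balanced sizes with $p,q$ the two smallest parts. Here first-order Rayleigh moves are inconclusive. Moving one vertex from a full part into a throttled part two smaller gives a gain whose leading-order terms cancel. This is the same obstruction the paper meets in Lemma~\ref{lem:balancing} for the related family $Y(n_1,\ldots,n_r;m_1,m_2)$.

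So you must compare the secular equations of the equitable quotients to second order, track the deleted edge $u_pu_q$, and settle the equality case. That computation is precisely what the paper imports from Li and Peng, and it is what your proposal lacks. The Temple-type Lemma~\ref{lem:residual} and the expansion in Lemma~\ref{lem:balancing} are tailored to the throttled family and do not transfer without being redone.
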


	We also use an elementary second-order refinement of the Rayleigh principle, which converts the residual of a test vector into a quantitative gain in the largest eigenvalue. It is a variant of Temple's inequality; we include the short proof.
	
	\begin{lem}\label{lem:residual}
		Let $M$ be a real symmetric $N\times N$ matrix with eigenvalues $\lambda_1\ge\lambda_2\ge\cdots\ge\lambda_N$, let $\boldsymbol x$ be a unit vector, and put $q:=\boldsymbol x^{\mathsf T}M\boldsymbol x$ and $\boldsymbol r:=M\boldsymbol x-q\boldsymbol x$. If $q\ge\lambda_2$, then
		\[
		\lambda_1\ \ge\ q+\frac{\lVert\boldsymbol r\rVert^2}{\lambda_1-\lambda_N}.
		\]
	\end{lem}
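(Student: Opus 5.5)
The plan is to expand $\boldsymbol x$ in an orthonormal eigenbasis $\boldsymbol v_1,\ldots,\boldsymbol v_N$ of $M$ (with $M\boldsymbol v_i=\lambda_i\boldsymbol v_i$), writing $\boldsymbol x=\sum_i c_i\boldsymbol v_i$ with $\sum_i c_i^2=1$. Then
\[
q=\sum_i \lambda_i c_i^2,\qquad \lVert\boldsymbol r\rVert^2=\lVert M\boldsymbol x\rVert^2-q^2=\sum_i(\lambda_i-q)^2c_i^2 .
\]
The key identity to aim for is the Temple-type product
\[
\sum_i(\lambda_1-\lambda_i)(q-\lambda_i)\,c_i^2 .
\]
Expanding the product and using the two formulas above, this sum equals $\lVert\boldsymbol r\rVert^2-(\lambda_1-q)\sum_i (q-\lambda_i)c_i^2$, and the last factor is $q-q=0$. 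So the sum is simply $\lVert\boldsymbol r\rVert^2$. I will double-check this expansion carefully, since it is the only computation.

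Next, I would bound the sum from above. Split off $i=1$, whose term vanishes because $\lambda_1-\lambda_1=0$. For $i\ge2$ we have $\lambda_i\le\lambda_2\le q$, so $q-\lambda_i\ge0$. Also $0\le\lambda_1-\lambda_i\le\lambda_1-\lambda_N$. Hence
\[
\lVert\boldsymbol r\rVert^2\le(\lambda_1-\lambda_N)\sum_{i\ge2}(q-\lambda_i)c_i^2 .
\]
Since $\sum_i(q-\lambda_i)c_i^2=0$, the tail sum equals $-(q-\lambda_1)c_1^2=(\lambda_1-q)c_1^2\le\lambda_1-q$, using $\lambda_1\ge q$ from the Rayleigh principle. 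This gives $\lVert\boldsymbol r\rVert^2\le(\lambda_1-\lambda_N)(\lambda_1-q)$.

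Finally, dividing by $\lambda_1-\lambda_N$ gives the claim whenever $\lambda_1>\lambda_N$. The degenerate case $\lambda_1=\lambda_N$ means $M$ is a scalar matrix, so $\boldsymbol r=0$. The statement then needs the convention that the fraction is read as $0$, or one assumes $\lambda_1>\lambda_N$; I would note this explicitly. The main (mild) obstacle is just getting the sign bookkeeping in the identity right. The hypothesis $q\ge\lambda_2$ is used exactly once, to make every $i\ge2$ term nonnegative so that $\lambda_1-\lambda_i$ can be replaced by its maximum.
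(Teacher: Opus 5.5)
Your proof is correct and is essentially the paper's argument. Both use $\sum_i c_i^2(q-\lambda_i)=0$ and the same termwise slack $(\lambda_i-\lambda_N)(q-\lambda_i)\ge0$ for $i\ge2$ to reach $\lVert\boldsymbol r\rVert^2\le c_1^2(\lambda_1-q)(\lambda_1-\lambda_N)$: you package this through the Temple product $\sum_i(\lambda_1-\lambda_i)(q-\lambda_i)c_i^2=\lVert\boldsymbol r\rVert^2$, while the paper splits off the term $c_1^2(\lambda_1-q)^2$. One minor slip: in your expansion the correction term is $+(\lambda_1-q)\sum_i(q-\lambda_i)c_i^2$, not $-$, which is harmless because that sum vanishes.
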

	
	\begin{proof}
		Write $\boldsymbol x=\sum_ic_i\boldsymbol u_i$ in an orthonormal eigenbasis, $M\boldsymbol u_i=\lambda_i\boldsymbol u_i$. Comparing the expansions of $q$ and $\lVert\boldsymbol x\rVert^2=1$ gives $\sum_ic_i^2(\lambda_i-q)=0$, that is,
		\[
		c_1^2(\lambda_1-q)=\sum_{i\ge2}c_i^2(q-\lambda_i),
		\]
		where every summand on the right is nonnegative because $q\ge\lambda_2$. Hence
		\[
		\begin{aligned}
			\lVert\boldsymbol r\rVert^2
			&=\sum_ic_i^2(\lambda_i-q)^2\\
			&\le c_1^2(\lambda_1-q)^2
			+(q-\lambda_N)\sum_{i\ge2}c_i^2(q-\lambda_i)\\
			&=c_1^2(\lambda_1-q)(\lambda_1-\lambda_N)\\
			&\le(\lambda_1-q)(\lambda_1-\lambda_N).
		\end{aligned}
		\]
		and dividing by $\lambda_1-\lambda_N$ gives the claim. (If $\lambda_1=\lambda_N$ then $M$ is scalar and $\boldsymbol r=\boldsymbol 0$.)
	\end{proof}
	
	For an adjacency matrix, $|\lambda_N|\le\lambda_1\le N-1$, so the denominator above is less than $2N$. The graphs to which we apply Lemma~\ref{lem:residual} are complete multipartite graphs perturbed at one vertex, and for these the hypothesis $q\ge\lambda_2$ costs nothing:
	
	\begin{obs}\label{obs:lambda2}
		Let $K$ be a complete multipartite graph on a vertex set $V$, let $u\in V$, and let $Y=K-B$, where $B$ is a set of edges of $K$ all incident with $u$. Then $\lambda_2(Y)\le\sqrt{|B|}\le\sqrt{|V|}$.
	\end{obs}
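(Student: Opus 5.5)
The plan is to treat $Y$ as a low-rank perturbation of $K$ and apply Weyl's eigenvalue inequalities twice: once to show that $\lambda_2(K)\le 0$, and once to control the effect of deleting $B$.

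\textbf{Step 1: $\lambda_2(K)\le 0$.} Let $V_1,\ldots,V_k$ be the parts of $K$. Then
\[
A(K)=J-D,\qquad D=\bigoplus_{i=1}^{k} J_{|V_i|},
\]
where $J$ is the all-ones matrix on $V$ and $D$ is block diagonal with all-ones blocks. Each block $J_{|V_i|}$ is positive semidefinite, so $\lambda_1(-D)\le 0$. Also $J$ has rank one, so $\lambda_2(J)=0$. Weyl's inequality $\lambda_{i+j-1}(P+Q)\le\lambda_i(P)+\lambda_j(Q)$, with $i=2$, $j=1$, gives
\[
\lambda_2(K)\le\lambda_2(J)+\lambda_1(-D)\le 0.
\]
This is the classical fact that a complete multipartite graph has exactly one positive eigenvalue.

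\textbf{Step 2: the deletion.} Since every edge of $B$ is incident with $u$, the graph $(V,B)$ is a star $K_{1,|B|}$ centred at $u$, together with isolated vertices. Its adjacency matrix $S$ therefore has eigenvalues $\pm\sqrt{|B|}$ and $0$, so $\lambda_1(-S)=\sqrt{|B|}$. We have $A(Y)=A(K)-S$, and Weyl's inequality again yields
\[
\lambda_2(Y)\le\lambda_2(K)+\lambda_1(-S)\le\sqrt{|B|}.
\]

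\textbf{Step 3: the final bound.} Finally, $|B|\le\deg_K(u)\le|V|-1<|V|$, which gives $\sqrt{|B|}\le\sqrt{|V|}$.

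I do not expect any real obstacle. The only point needing care is the decomposition in Step 1: writing $A(K)$ as a rank-one matrix minus a positive semidefinite one is exactly what makes Weyl's inequality yield $\lambda_2(K)\le 0$.
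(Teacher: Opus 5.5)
Your proof is correct and follows the paper's argument. Your $D=\bigoplus_i J_{|V_i|}$ is exactly the paper's $I+A(P)$, where $P$ is the complement of $K$, so the decomposition of $A(K)$ is the same, and the star step via Weyl's inequality is identical. For $\lambda_2(K)\le 0$ you apply Weyl with the rank-one $J$, whereas the paper restricts the quadratic form to $\boldsymbol 1^{\perp}$ and uses Courant--Fischer; this difference is only cosmetic.
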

	
	\begin{proof}
		First, $\lambda_2(K)\le0$. Indeed, the complement of $K$ is a disjoint union $P$ of cliques, so $A(K)=J-I-A(P)$, and a disjoint union of cliques has smallest eigenvalue at least $-1$; hence for every $\boldsymbol y\perp\boldsymbol 1$,
		\[
		\boldsymbol y^{\mathsf T}A(K)\boldsymbol y=-\lVert\boldsymbol y\rVert^2-\boldsymbol y^{\mathsf T}A(P)\boldsymbol y\le\bigl(-1-\lambda_{\min}(A(P))\bigr)\lVert\boldsymbol y\rVert^2\le0,
		\]
		and the Courant--Fischer theorem, applied with the hyperplane $\boldsymbol 1^{\perp}$, gives $\lambda_2(K)\le0$. Next, $B$ is a star at $u$ with $|B|$ edges, so $A(B)$ has spectrum symmetric about $0$ with largest eigenvalue $\sqrt{|B|}$, whence $\lambda_1(-A(B))=-\lambda_{\min}(A(B))=\sqrt{|B|}$. Since $A(Y)=A(K)+\bigl(-A(B)\bigr)$, Weyl's inequality $\lambda_2(X+Z)\le\lambda_2(X)+\lambda_1(Z)$ gives
		\[
		\lambda_2(Y)\le\lambda_2(K)+\sqrt{|B|}\le\sqrt{|B|}\le\sqrt{|V|}.
		\]
	\end{proof}
	
	A further ingredient is a structural condition on $F$ that controls how a single vertex can attach to a near-Tur\'an host without creating a copy of $F$. It isolates two features: an embedding criterion, guaranteeing that a vertex meeting a near-Tur\'an host in enough places forces a copy of $F$, and a complementary certificate that the balanced throttled graph produced by the spectral estimates is itself $F$-free.
	
	\begin{defn}\label{def:embeddable}
		Let $F$ be an edge-color-critical graph with $\chi(F)=r+1\ge3$, and let $s\ge2$ be an integer. We say that $F$ is \emph{$s$-embeddable} if there is a constant $k=k(F)$ for which the following two conditions hold.
		\begin{itemize}
			\item[$(E)$] Whenever a graph $H$ contains a complete $r$-partite subgraph with parts $Z_1,\ldots,Z_r$, each of size at least $k$, together with a vertex $u_0\notin\bigcup_iZ_i$ that is adjacent to at least $s$ vertices of $Z_1$, to at least one vertex of $Z_2$, and to all of $\bigcup_{i\ge3}Z_i$, then $F\subseteq H$.
			\item[$(T)$] Any graph obtained from a complete $r$-partite graph with every part of size at least $k$ by adding one vertex joined to all of some $r-2$ of the parts and to at most $s-1$ vertices in each of the remaining two parts is $F$-free.
		\end{itemize}
	\end{defn}
	
	Here $s$ is a \emph{throttling threshold}: it appears in $(E)$ as the number of neighbors in $Z_1$ that force $F$, and in $(T)$ as the largest admissible number of neighbors in a throttled part. The auxiliary constant $k$ bounds the part sizes needed for the embedding and is never binding, as the relevant parts below have size $\Theta(n)$. Condition~$(E)$ drives the throttling and placement estimates of Section~\ref{sec:proof}; condition~$(T)$ certifies that the graph they produce is $F$-free. Both hold, with $s=t_{\min}$, for the complete multipartite graphs with $t_{\min}\ge2$ treated in Sections~\ref{sec:applications} and~\ref{sec:multipartite}.
	
	\subsection{Structure of spectral extremal graphs}
	
	Throughout this section and Section~\ref{sec:proof}, fix $G\in \mathrm{EX}_{r+1,\rho}(n,F)$; our goal is to deduce $G\in \mathrm{EX}_{r+1}(n,F)$. For an integer $n$, write
	$t_r(n):=|E(T_{n,r})|$. Assume that there is a constant $a\in\mathbb{Z}$, such that for all sufficiently large $n$,
	\begin{equation}
		\mathrm{ex}_{r+1}(n,F)
		=t_r(n)-\left\lfloor\frac nr\right\rfloor+a.
		\label{eq:e1}
	\end{equation}
	
	\begin{lem}\label{L1}
		For sufficiently large $n$,
		\begin{equation}
			\rho(G)
			\ge
			\left(1-\frac{1}{r}\right) n-\frac2r+\frac{2a-r/4}{n}.
			\label{eq:e2}
		\end{equation}
	\end{lem}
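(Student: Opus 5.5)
Since $G$ maximizes the spectral radius over non-$r$-partite $F$-free graphs of order $n$, it suffices to exhibit one such graph $H$ and bound $\rho(H)$ from below. The natural choice is $H\in\mathrm{EX}_{r+1}(n,F)$: by \eqref{eq:e1} it is a non-$r$-partite $F$-free graph with $e(H)=t_r(n)-\lfloor n/r\rfloor+a$. Rayleigh's principle with the all-ones vector gives
\[
\rho(G)\ge\rho(H)\ge\frac{2e(H)}{n}=\frac{2t_r(n)}{n}-\frac{2}{n}\Bigl\lfloor\frac nr\Bigr\rfloor+\frac{2a}{n}.
\]

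The remaining step is elementary arithmetic. Write $n=qr+b$ with $0\le b<r$. Then
\[
2t_r(n)=n^2-\sum_i|T_i|^2=\Bigl(1-\frac1r\Bigr)n^2-\frac{b(r-b)}{r},
\]
and $b(r-b)\le r^2/4$, so $2t_r(n)/n\ge(1-\frac1r)n-\frac{r}{4n}$. Also $\lfloor n/r\rfloor\le n/r$ gives $-\frac2n\lfloor n/r\rfloor\ge-\frac2r$. Adding these bounds yields
\[
\rho(G)\ge\Bigl(1-\frac1r\Bigr)n-\frac2r+\frac{2a-r/4}{n},
\]
which is \eqref{eq:e2}.

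There is no real obstacle here. The only points to check are that $\mathrm{EX}_{r+1}(n,F)$ is nonempty for large $n$, which is implicit in \eqref{eq:e1}, and the exact form of the Turán imbalance term $b(r-b)/r$.
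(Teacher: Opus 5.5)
Your proposal is correct and follows the paper's own proof: compare $G$ with an edge-extremal $H\in\mathrm{EX}_{r+1}(n,F)$ and apply the all-ones Rayleigh quotient, $\rho(G)\ge\rho(H)\ge 2|E(H)|/n$. Your explicit identity $2t_r(n)=(1-\frac1r)n^2-\frac{b(r-b)}{r}$, together with $b(r-b)\le r^2/4$ and $\lfloor n/r\rfloor\le n/r$, supplies the arithmetic that the paper leaves implicit in its final inequality.
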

	
	\begin{proof}
		Choose an edge-extremal graph $H\in \operatorname{EX}_{r+1}(n,F)$. By the spectral extremality of $G$ and the Rayleigh quotient,
		\[
		\rho(G)\ge \rho(H)\ge \frac{2|E(H)|}n.
		\]
		By \eqref{eq:e1}, we obtain
		\begin{align*}
			\rho(G)
			&\ge
			\frac2n
			\left(
			t_r(n)-\left\lfloor\frac nr\right\rfloor+a
			\right)\ge \left(1-\frac{1}{r}\right) n-\frac2r+\frac{2a-r/4}{n}.
		\end{align*}
		
	\end{proof}

	\begin{lem}\label{lem:connected}
		$G$ is connected. 
	\end{lem}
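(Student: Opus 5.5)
We argue by contradiction, exploiting the fact that spectral radius is attained on a single component and that joining components by one edge strictly increases it. Suppose $G$ is disconnected, with components $G_1,\dots,G_m$, $m\ge2$. Then $\rho(G)=\max_i\rho(G_i)$; choose a component $G_1$ with $\rho(G_1)=\rho(G)$.

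\emph{Case 1: $\chi(G_1)\ge r+1$.} Pick a vertex $x\in V(G_1)$ and a vertex $y$ in another component, and let $G'=G+xy$. Adding a bridge between two components cannot create a copy of $F$ whenever $F$ is $2$-connected, but $F$ need not be $2$-connected, so we will not rely on this. Instead we replace $G$ by a better candidate directly. Consider $G''$ obtained from $G_1$ by adding $n-|V(G_1)|$ new vertices, each joined to a single vertex $x$ of $G_1$ (pendant vertices). The resulting graph is still non-$r$-partite, since it contains $G_1$. It is $F$-free provided $F$ has minimum degree at least $2$. Every edge-color-critical $F$ with $\chi(F)=r+1\ge3$ may be assumed to have no vertex of degree $\le1$ participating essentially, but this is again not automatic. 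So the cleaner route is the following. By Lemma~\ref{L1}, $\rho(G_1)=\rho(G)\ge(1-\frac1r)n-O(1)$. Since $\rho(G_1)\le|V(G_1)|-1$, this forces $|V(G_1)|\ge(1-\frac1r)n$, so the other components together have at most $n/r+O(1)$ vertices. Now apply Theorem~\ref{dkl} to $G$: $G$ is $\varepsilon n^2$-close to $T_{n,r}$, so all but $O(\sqrt{\varepsilon}n)$ vertices have degree about $(1-\frac1r)n$. These high-degree vertices cannot lie outside $G_1$, because the other components are too small to support such degrees. Hence, if there were $\Omega(n)$ vertices outside $G_1$, we would violate $\varepsilon n^2$-closeness. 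So only $o(n)$ vertices lie outside $G_1$.

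Next we compare $G$ with a rebuilt graph. Let $W=V(G)\setminus V(G_1)$, with $|W|=w\ge1$. Let $G^*$ be obtained from $G_1$ by adding the vertices of $W$ and joining each of them to a set $N$ of vertices of $G_1$ chosen as follows. By the closeness to $T_{n,r}$ and the Simonovits cleaning of Theorem~\ref{TS}, $G_1$ contains a large complete $r$-partite subgraph $Z_1,\dots,Z_r$ with parts of size $\Theta(n)$. We join each $w$-vertex to exactly one vertex of $Z_1$, or more safely to no more than the throttled pattern of $(T)$: all of $Z_3,\dots,Z_r$ and at most $s-1$ vertices of each of $Z_1,Z_2$. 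The main obstacle is certifying that $G^*$ is $F$-free, because $(T)$ only controls a single added vertex attached to a clean complete $r$-partite graph, not to $G_1$ with its exceptional vertices. To avoid this, the minimal modification suffices: join one vertex $y\in W$ by a single edge to one vertex $x$ of $G_1$ whose neighbourhood lies in the clean structure.

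Since $G^*\supseteq G$ is connected on the relevant piece and $G_1$ is a proper subgraph of a connected component of $G^*$, Perron--Frobenius gives $\rho(G^*)>\rho(G_1)=\rho(G)$. The graph $G^*$ is non-$r$-partite because it contains $G$. For $F$-freeness, any copy of $F$ must use the new edge $xy$. Because $y$ has at most $w=o(n)$ neighbours, we would instead re-embed: $y$'s role in the copy can be played by a vertex of $Z_i$ adjacent to $x$, using that $Z_i$ is large and almost completely joined. Justifying this re-embedding carefully is where I expect the main difficulty.

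\emph{Case 2: $\chi(G_1)\le r$.} Then some other component $G_2$ carries the non-$r$-partiteness. We add an edge between $G_1$ and $G_2$, or more robustly replace $G_2$ by a small odd/critical gadget attached via one edge to the clean part of $G_1$. In either version, a Perron--Frobenius increase again contradicts the maximality of $\rho(G)$.

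Either case yields a contradiction, so $G$ is connected.
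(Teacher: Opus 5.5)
There is a genuine gap in both cases, and each sits where you yourself flag the difficulty. In Case~1 your final construction is $G+xy$, which keeps the rest of $y$'s component. A copy of $F$ in $G+xy$ uses $xy$ as a bridge and can extend arbitrarily far into $y$'s component, so re-embedding only the vertex $y$ does not work. For instance, $F=K_{r+1}$ with a pendant edge is edge-color-critical with $\chi(F)=r+1$. If $y$ lies in a component isomorphic to $K_{r+1}$ (which is $F$-free), then $G+xy\supseteq F$ for every $x$, and no contradiction arises.

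The route you abandoned is the one that works, once you discard the other components. Let $\widehat G$ consist of $G_1$, a \emph{new} vertex $v$ joined only to a vertex $u\in V(G_1)$ with $d_{G_1}(u)\ge|V(F)|$, and $n-|V(G_1)|-1$ isolated vertices. Such a $u$ exists, since $\Delta(G_1)\ge\rho(G_1)=\rho(G)$ is linear in $n$ by Lemma~\ref{L1}. $\widehat G$ is non-$r$-partite because $\chi(G_1)>r$, and $\rho(\widehat G)>\rho(G)$ by Perron--Frobenius. Any copy of $F$ in $\widehat G$ must use $v$, whose only possible neighbour in the copy is $u$. So $v$ can be swapped for a neighbour of $u$ in $G_1$ outside the copy, giving $F\subseteq G_1$, a contradiction. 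No assumption $\delta(F)\ge2$ is needed. The detour through Theorem~\ref{dkl} bounding the number of vertices outside $G_1$ is also unnecessary. The paper phrases this as: for every $u$, either $\widehat G$ is $F$-free or $d_{G_1}(u)<|V(F)|$, whence $\rho(G_1)\le\Delta(G_1)<|V(F)|$.

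In Case~2 neither proposed construction is justified. Adding an edge between $G_1$ and the non-$r$-partite component $G_2$ can create $F$, by the same example with $G_2\cong K_{r+1}$. The ``small odd/critical gadget'' is neither specified nor shown to stay $F$-free after gluing. In fact no construction is needed here. Since $G_2$ is non-$r$-partite it has at least $r+1$ vertices, so the $r$-partite component $G_1$ has at most $n-r-1$ vertices. The spectral Tur\'an theorem then gives $\rho(G)=\rho(G_1)\le(1-\frac1r)(n-r-1)$. This contradicts $\rho(G)\ge(1-\frac1r)n-\frac2r+O(n^{-1})$ from Lemma~\ref{L1}, because $(1-\frac1r)(r+1)-\frac2r=\frac{r^2-3}{r}>0$.
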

	
	\begin{proof}
		
		Suppose to the contrary, that $G$ is disconnected. Let
		\[
		G_1,\ldots,G_s
		\]
		be its connected components, labeled so that
		\[
		\rho(G_1)=\max_{1\le i\le s}\rho(G_i)=\rho(G).
		\]
		We first claim that $G_1$ is non-$r$-partite.
		
		Suppose instead that $\chi(G_1)\le r$. Since $G$ is non-$r$-partite, some
		other component $G_j$ satisfies $\chi(G_j)>r$. Hence
		\[
		|V(G_j)|\ge r+1,
		\qquad
		|V(G_1)|\le n-r-1.
		\]
		As $G_1$ is $r$-partite, the spectral Tur\'{a}n theorem gives
		\[
		\rho(G)=\rho(G_1)
		\le
		\left(1-\frac{1}{r}\right)|V(G_1)|
		\le
		\left(1-\frac{1}{r}\right)(n-r-1).
		\]
		On the other hand, \eqref{eq:e2} gives
		\[
		\rho(G)
		\ge
		\left(1-\frac{1}{r}\right) n-\frac2r+O(n^{-1}).
		\]
		The difference between the latter lower bound and the former upper bound is
		\[
		\left(1-\frac{1}{r}\right)(r+1)-\frac2r+o(1)
		=
		\frac{r^2-3}{r}+o(1)>0,
		\]
		a contradiction. Therefore,
		\begin{equation}
			\chi(G_1)>r.
			\label{eq:e3}
		\end{equation}
		
		Fix an arbitrary vertex $u\in V(G_1)$. Construct an $n$-vertex graph
		$\widehat G$ by adding a new vertex $v$ and the pendant edge $uv$ to $G_1$,
		and then adding $n-|V(G_1)|-1$ isolated vertices. By \eqref{eq:e3}, the graph
		$\widehat G$ is still non-$r$-partite, and
		\begin{equation*}
			\rho(\widehat G)>\rho(G_1)=\rho(G).
		\end{equation*}
		
		If $\widehat G$ were $F$-free, then it would contradict the spectral
		extremality of $G$. Thus $\widehat G$ contains a copy $F'$ of $F$. Since
		$G_1$ is $F$-free, the new vertex $v$ must
		belong to $F'$, and $uv$ is an edge of $F'$.
		
		We claim that $d_{G_1}(u)<|F|$. Otherwise, if $d_{G_1}(u)\ge |F|$, then there is a
		vertex
		$
		w\in N_{G_1}(u)\setminus V(F').
		$
		Replacing $v$ by $w$ in the copy $F'$ produces a copy of $F$ inside $G_1$,
		a contradiction. Since $u$ was arbitrary, we have $\Delta(G_1)<|F|.$
		
		Consequently,
		\[
		\rho(G)=\rho(G_1)\le\Delta(G_1)<|F|,
		\]
		contradicting the linear lower bound \eqref{eq:e2}. Therefore $G$ is connected. 
	\end{proof} 
	
	\begin{lem}\label{L3}
		Let $\epsilon > 0$ be a small constant. Then
		\[|E(G)| \geq t_{r}(n)- \epsilon n^2 .\] 
		Also, $G$ has a partition $V(G) = V_1 \cup \ldots \cup V_r$ such that $\sum_{1 \leq i < j \leq r} |E(V_i, V_j)|$ attains the maximum, and
		\[
		\sum_{i=1}^r |E(G[V_i])| \leq \epsilon n^2,
		\]
		and for each $i \in [r]$,
		\[
		\left(\frac{1}{r} - 3\sqrt{\epsilon}\right)n < |V_i| < \left(\frac{1}{r} + 3\sqrt{\epsilon}\right)n.
		\]
	\end{lem}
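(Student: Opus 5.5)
The plan is to derive the edge bound from spectral stability (Theorem~\ref{dkl}), applied to the lower bound of Lemma~\ref{L1}, and then to get the partition properties from a max-cut argument.

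\emph{Edge bound.} Given $\epsilon>0$, apply Theorem~\ref{dkl} with $\epsilon' $ a small constant times $\epsilon$. This produces $\delta>0$ and $n_0$. By Lemma~\ref{L1},
\[
\rho(G)\ge(1-\tfrac1r)n-\tfrac2r+O(1/n)\ge(1-\tfrac1r-\delta)n
\]
once $n$ is large. Hence $G$ is obtained from $T_{n,r}$ by adding and deleting at most $\epsilon' n^2$ edges. In particular $|E(G)|\ge t_r(n)-\epsilon' n^2\ge t_r(n)-\epsilon n^2$.

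\emph{Partition.} Let $U_1,\dots,U_r$ be the parts of the Tur\'an graph from that comparison. Then $\sum_i e(G[U_i])\le\epsilon' n^2$, and the number of crossing edges between the $U_i$ is at least $t_r(n)-\epsilon' n^2$. Now choose a partition $V_1,\dots,V_r$ that maximizes the number of crossing edges $\sum_{i<j}e(V_i,V_j)$. Comparing with $(U_i)$ gives
\[
\sum_{i<j}e(V_i,V_j)\ge t_r(n)-\epsilon' n^2 .
\]
Since $G$ is $F$-free, Theorem~\ref{TS} gives $|E(G)|\le t_r(n)$ for large $n$. Therefore
\[
\sum_i e(G[V_i])=|E(G)|-\sum_{i<j}e(V_i,V_j)\le \epsilon' n^2\le\epsilon n^2 .
\]

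\emph{Part sizes.} This is the only slightly delicate step. Write $|V_i|=n/r+x_i$ with $\sum_i x_i=0$. The crossing edges number at most $\sum_{i<j}|V_i||V_j|$, and
\[
\sum_{i<j}|V_i||V_j|=\tfrac12\Bigl(n^2-\sum_i|V_i|^2\Bigr)=\tfrac12\Bigl(n^2-\tfrac{n^2}{r}-\sum_i x_i^2\Bigr)\le t_r(n)+O(1)-\tfrac12\sum_i x_i^2 .
\]
Combining this with the lower bound $t_r(n)-\epsilon' n^2$ on crossing edges yields $\sum_i x_i^2\le 2\epsilon' n^2+O(1)$. Hence each $|x_i|\le\sqrt{2\epsilon'}\,n+O(1)<3\sqrt{\epsilon}\,n$, with room to spare. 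Taking $\epsilon'=\epsilon$ already suffices, since $\sqrt{2}<3$.

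The main point to check is that Theorem~\ref{dkl} is applicable, i.e.\ that the constant loss $2/r$ in Lemma~\ref{L1} is absorbed by $\delta n$. This holds for large $n$, so I expect no real obstacle.
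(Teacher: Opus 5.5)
Your proof is correct and follows essentially the paper's route: Theorem~\ref{dkl} together with Lemma~\ref{L1} gives a near-Tur\'an partition $U_1,\dots,U_r$, a max-cut partition inherits the crossing-edge lower bound, and a quadratic-form estimate controls the part sizes. The deviations are minor. The paper bounds $\sum_i e(G[V_i])$ directly by $\sum_i e(G[U_i])$ via max-cut minimality, so your detour through Theorem~\ref{TS} is valid but unnecessary. For the part sizes, the paper bounds one part's deviation from the total edge count, getting the constant $\sqrt{6(r-1)/r}$, while you bound $\sum_i x_i^2$ from the crossing edges and get $\sqrt2$; both constants are below $3$.
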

	
	\begin{proof}
		From Lemma \ref{L1} and Theorem \ref{dkl}, it follows that $G$ is obtained from $T_{n,r}$ by adding or deleting at most $\epsilon n^2$ edges for large enough $n$. Then there is a partition of $V(G) = U_1 \cup \cdots \cup U_r$ with $\sum_{i=1}^r |E(G[U_i])| \leq \epsilon n^2$, $\sum_{1 \leq i < j \leq r} |E(U_i, U_j)| \geq |E(T_{n,r})| - \epsilon n^2$ and $\left\lfloor \frac{n}{r} \right\rfloor \leq |U_i| \leq \left\lceil \frac{n}{r} \right\rceil$ for each $i \in [r]$. So $|E(G)| \geq |E(T_{n,r})| - \epsilon n^2$. Also, $G$ has a partition $V = V_1 \cup \cdots \cup V_r$ such that $\sum_{1 \leq i < j \leq r} |E(V_i, V_j)|$ attains the maximum. In this case, $\sum_{i=1}^r |E(G[V_i])| \leq \sum_{i=1}^r |E(G[U_i])| \leq \epsilon n^2$ and $\sum_{1 \leq i < j \leq r} |E(V_i, V_j)| \geq \sum_{1 \leq i < j \leq r} |E(U_i, U_j)| \geq |E(T_{n,r})| - \epsilon n^2$. Let $s = \max \left\{ \left| |V_j| - \frac{n}{r} \right|, j \in [r] \right\}$. Without loss of generality, we assume $\left| |V_1| - \frac{n}{r} \right| = s$. Then
		\begin{align*}
			|E(G)| &\leq \sum_{1 \leq i < j \leq r} |V_i||V_j| + \sum_{i=1}^r |E(G[V_i])| \\
			&\leq |V_1|(n - |V_1|) + \sum_{2 \leq i < j \leq r} |V_i||V_j| + \epsilon n^2 \\
			&= |V_1|(n - |V_1|) + \frac{1}{2} \left( \sum_{j=2}^r |V_j| \right)^2 - \frac{1}{2}\sum_{j=2}^r |V_j|^2 + \epsilon n^2 \\
			&\leq |V_1|(n - |V_1|) + \frac{1}{2} (n - |V_1|)^2 - \frac{1}{2(r-1)} (n - |V_1|)^2 + \epsilon n^2 \\
			&< -\frac{r}{2(r-1)} s^2 + \frac{r-1}{2r} n^2 + \epsilon n^2,
		\end{align*}
		where the second last inequality holds by H\"older's inequality, and the last inequality holds since $\left(|V_1| - \frac{n}{r}\right)^2 = s^2$. The same estimate applies to each $V_i$ in place of $V_1$, so it bounds $\left||V_i|-\frac nr\right|$ for every $i$, both above and below. On the other hand,
		\[
		|E(G)| \geq |E(T_{n,r})| - \epsilon n^2 \geq \left( 1 - \frac{1}{r} \right) \frac{n^2}{2} - \frac{r}{8} - \epsilon n^2 > \frac{r-1}{2r} n^2 - 2\epsilon n^2,
		\]
		as $n$ is large enough. Therefore, $\frac{r}{2(r-1)} s^2 < 3\epsilon n^2$, which implies that
		\[
		s < \sqrt{\frac{6(r-1)\epsilon}{r}}\, n < \sqrt{6\epsilon}\, n < 3\sqrt{\epsilon}\, n.
		\]
		The proof is completed.
	\end{proof}

	Put $\eta:=\epsilon^{1/3}$. Denote $W_i :=  \{v \in V_i \mid d_{V_i}(v) \geq 2\eta n\}$ and $W :=\cup_{i=1}^{r}W_{i}$, and let
	\[
	L := \Bigl\{ v \in V(G) \;\Big|\; d(v) \leq \Bigl(1 - \tfrac{1}{r} - 3r\epsilon^{1/3}\Bigr)n \Bigr\}.
	\]

	\begin{lem}
		$|L|\leq  \epsilon^{\frac{1}{3}} n$
	\end{lem}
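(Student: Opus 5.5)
We plan to argue by edge counting, setting the low-degree set $L$ against the near-Tur\'an lower bound of Lemma~\ref{L3}. Suppose, for contradiction, that $|L|>\epsilon^{1/3}n$. Choose a subset $L'\subseteq L$ with $|L'|=\lfloor\epsilon^{1/3}n\rfloor$, and let $G'=G-L'$, a graph on $n'=n-|L'|$ vertices. Each vertex of $L'$ has degree at most $(1-\frac1r-3r\epsilon^{1/3})n$. Deleting $L'$ therefore removes at most $|L'|(1-\frac1r-3r\epsilon^{1/3})n$ edges. Combining this with $|E(G)|\ge t_r(n)-\epsilon n^2$ from Lemma~\ref{L3}, we get
\[
|E(G')|\ \ge\ t_r(n)-\epsilon n^2-|L'|\Bigl(1-\tfrac1r-3r\epsilon^{1/3}\Bigr)n .
\]

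The next step is to compare this with $t_r(n')$. Using $t_r(m)\approx(1-\frac1r)\frac{m^2}{2}$ up to an $O(1)$ error, we obtain
\[
t_r(n)-t_r(n')=\Bigl(1-\tfrac1r\Bigr)\Bigl(|L'|n-\tfrac{|L'|^2}{2}\Bigr)+O(1)\le\Bigl(1-\tfrac1r\Bigr)|L'|n .
\]
Hence
\[
|E(G')|\ \ge\ t_r(n')+3r\epsilon^{1/3}|L'|n-\epsilon n^2-O(1).
\]
With $|L'|\approx\epsilon^{1/3}n$, the gain term is about $3r\epsilon^{2/3}n^2$, which dominates $\epsilon n^2$ once $\epsilon$ is small. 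So $|E(G')|>t_r(n')$, while $n'\ge(1-\epsilon^{1/3})n$ is still large.

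Since $G'$ is a subgraph of the $F$-free graph $G$, it is $F$-free. Theorem~\ref{TS} then gives $|E(G')|\le \mathrm{ex}(n',F)=t_r(n')$ for $n'$ large, which contradicts the previous paragraph. Therefore $|L|\le\epsilon^{1/3}n$.

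The only delicate point is the bookkeeping in the second step. We must check that the quadratic correction $-(1-\frac1r)\frac{|L'|^2}{2}$ and the floor in $|L'|$ work in our favour or cost only $O(\epsilon^{1/3}n)$, and that the gain $3r\epsilon^{2/3}n^2$ beats the loss $\epsilon n^2$. Both hold for $\epsilon$ small and $n$ large, so no real obstacle is expected.
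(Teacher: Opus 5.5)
Your route is the paper's: delete $\lfloor\epsilon^{1/3}n\rfloor$ vertices of $L$, use $|E(G)|\ge t_r(n)-\epsilon n^2$ from Lemma~\ref{L3}, and contradict Theorem~\ref{TS} on the remaining $n'$ vertices.

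\textbf{The bookkeeping step is wrong as written.} You start from $|E(G')|\ge t_r(n)-\epsilon n^2-|L'|(1-\frac1r-3r\epsilon^{1/3})n$. To get a lower bound on $|E(G')|-t_r(n')$ you need a \emph{lower} bound on $t_r(n)-t_r(n')$. The upper bound $t_r(n)-t_r(n')\le(1-\frac1r)|L'|n$ that you state gives nothing, so the displayed bound $|E(G')|\ge t_r(n')+3r\epsilon^{1/3}|L'|n-\epsilon n^2-O(1)$ does not follow. The correct estimate, from $t_r(n)\ge(1-\frac1r)\frac{n^2}{2}-\frac r8$ and $t_r(n')\le(1-\frac1r)\frac{(n')^2}{2}$, is
\[
t_r(n)-t_r(n')\ \ge\ \Bigl(1-\tfrac1r\Bigr)\Bigl(|L'|n-\tfrac{|L'|^2}{2}\Bigr)-\tfrac r8 .
\]
So the quadratic term works against you. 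It costs about $\frac12(1-\frac1r)\epsilon^{2/3}n^2$, which is of the same order as your gain $3r\epsilon^{2/3}n^2$. It is not $O(\epsilon^{1/3}n)$, and it does not work in your favour, contrary to your last paragraph.

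\textbf{The argument still closes once you keep that term.} Using $\epsilon^{1/3}n-1\le|L'|\le\epsilon^{1/3}n$ one gets
\[
|E(G')|\ \ge\ t_r(n')+\Bigl(3r-\tfrac12+\tfrac1{2r}\Bigr)\epsilon^{2/3}n^2-\epsilon n^2-O(\epsilon^{1/3}n)\ >\ t_r(n'),
\]
because $3r>\frac12(1-\frac1r)$. The paper's single chain
\[
t_r(n)-\epsilon n^2-\epsilon^{1/3}n^2\Bigl(1-\tfrac1r-3r\epsilon^{1/3}\Bigr)\ >\ \tfrac{(n')^2}{2}\Bigl(1-\tfrac1r\Bigr)\ \ge\ t_r(n')
\]
uses exactly this margin implicitly. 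With the corrected inequality, your proof coincides with the paper's.
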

	
	\begin{proof}
		Suppose to the contrary that $|L| > \epsilon^{\frac{1}{3}} n$. Then there exists a subset $L' \subseteq L$ with $|L'| = \lfloor \epsilon^{\frac{1}{3}} n \rfloor$. Therefore, by Theorem \ref{TS},
		\begin{align*}
			|E(G[V \setminus L'])| 
			&\geq |E(G)| - \sum_{v \in L'} d(v) \\
			&\geq |E(T_{n,r})| - \epsilon n^2 - \epsilon^{\frac{1}{3}} n^2 \left(1 - \frac{1}{r} - 3r \epsilon^{\frac{1}{3}}\right) \\
			&> \frac{(n - \lfloor \epsilon^{\frac{1}{3}} n \rfloor)^2}{2} \left(1 - \frac{1}{r}\right)  \\
			&\geq |E(T_{n',r})| = \mathrm{ex}(n',F),
		\end{align*}
		where $n' = n - \lfloor \epsilon^{\frac{1}{3}} n \rfloor$ and $n$ is large enough. However, $|E(G[V \setminus L'])| > \mathrm{ex}(n',F)$ implies that $G[V \setminus L']$ contains an $F$, which contradicts that $G$ is $F$-free. 
	\end{proof}
	
	For the remainder of the proof set
		\[
		\alpha:=1-\frac1r,\qquad f:=|V(F)|,\qquad \gamma:=\sqrt\eta,
		\]
		where the fixed constant $\epsilon>0$ (and hence $\eta$ and $\gamma$) is chosen sufficiently small in terms of $r$ and $f$, before $n$ is chosen sufficiently large. By Lemma~\ref{L3}, $\bigl||V_i|-\frac nr\bigr|\le 3\sqrt\epsilon\,n\le\eta n$; by the previous lemma $|L|\le\eta n$; and for $v\notin L$ we have $d(v)>(\alpha-3r\eta)n$.
	
	\begin{lem}\label{Wbound}
		$|W|\le \eta^{2}n.$
	\end{lem}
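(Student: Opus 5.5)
Count the internal edges. Every vertex of $W_i$ has at least $2\eta n$ neighbours inside its own part $V_i$, so
\[
\sum_{i=1}^r |E(G[V_i])| \;\ge\; \frac12\sum_{i=1}^r\sum_{v\in W_i} d_{V_i}(v)\;\ge\; \frac12|W|\cdot 2\eta n=\eta n|W|.
\]
Lemma~\ref{L3} bounds the left side by $\epsilon n^2$. Since $\eta=\epsilon^{1/3}$, we get
\[
|W|\le \frac{\epsilon n^2}{\eta n}=\epsilon^{2/3}n=\eta^{2}n,
\]
which is exactly the claim.

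The only point to check is the factor $\tfrac12$. An edge of $G[V_i]$ with both ends in $W_i$ is counted twice in the inner sum $\sum_{v\in W_i}d_{V_i}(v)$, and an edge with one end in $W_i$ is counted once. Either way the inner sum is at most $2|E(G[V_i])|$. Summing over $i$ gives $\sum_i|E(G[V_i])|\ge \tfrac12\sum_{v\in W}d_{V_{i(v)}}(v)$, where $i(v)$ is the index of the part containing $v$.

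There is no real obstacle: the argument is a direct double count, and the exponents match exactly. If one prefers not to rely on the exact bound from Lemma~\ref{L3}, any bound of the form $\sum_i|E(G[V_i])|\le C\epsilon n^2$ still gives $|W|\le C\eta^2 n$. This weaker form suffices for later use after shrinking $\epsilon$, but with the lemma as stated the constant is $1$.
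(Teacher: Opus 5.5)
Your proof is correct and is essentially the paper's argument. The paper also bounds $2\eta n|W_i|\le\sum_{v\in W_i}d_{V_i}(v)\le 2|E(G[V_i])|$, sums over $i$, and uses $\sum_i|E(G[V_i])|\le\epsilon n^2$ from Lemma~\ref{L3} to conclude $|W|\le\epsilon n/\eta=\eta^{2}n$.
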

	
	\begin{proof}
		For each $i$, $2\eta n\,|W_i|\le\sum_{v\in W_i}d_{V_i}(v)\le 2|E(G[V_i])|$. Summing over $i$ and using $\sum_i |E(G[V_i])|\le\epsilon n^2$ gives $2\eta n\,|W|\le2\epsilon n^2$, so $|W|\le \epsilon n/\eta=\eta^2 n$.
	\end{proof}
	
	Since the partition of Lemma~\ref{L3} maximizes $\sum_{i<j}|E(V_i,V_j)|$, relocating a vertex to another part cannot increase the number of crossing edges; hence
	\begin{equation}
		d_{V_i}(v)\le d_{V_k}(v)\ \ (v\in V_i,\ k\ne i),
		\qquad\text{so}\qquad
		d_{V_i}(v)\le\tfrac1r d(v).
		\label{eq:e4}
	\end{equation}
	For each $i\in[r]$ put $\overline V_i:=V_i\setminus(L\cup W)$. By Lemma~\ref{Wbound} and $|L|\le\eta n$,
	\begin{equation}
		|L\cup W|\le 2\eta n.
		\label{eq:e5}
	\end{equation}
	
	\begin{lem}\label{cn}
		Let $i,k\in[r]$ with $i\ne k$.
		\begin{enumerate}
			\item[(a)] If $u\in\overline V_k$, then $d_{\overline V_i}(u)\ge\left(\frac1r-5r\eta\right)n$.
			\item[(b)] If $u\in W_k\setminus L$, then $d_{\overline V_i}(u)\ge\left(\frac1{r^2}-5r\eta\right)n$.
		\end{enumerate}
		Consequently, for $1\le p\le f$ and $x_1,\ldots,x_p\in\bigcup_{k\ne i}\overline V_k$,
		\begin{equation}
			\Bigl|\bigcap_{j=1}^p N_{\overline V_i}(x_j)\Bigr|\ge\Bigl(\frac1r-(5r+1)f\eta\Bigr)n\ge\frac n{2r}>f,
			\label{eq:e6}
		\end{equation}
		and if moreover $u\in\bigcup_{k\ne i}(W_k\setminus L)$, then
		\begin{equation}
			\Bigl|N_{\overline V_i}(u)\cap\bigcap_{j=1}^p N_{\overline V_i}(x_j)\Bigr|\ge\frac n{2r^2}>f.
			\label{eq:e7}
		\end{equation}
		Finally $|\overline V_i|\ge\left(\frac1r-3\eta\right)n>f$ for every $i$.
	\end{lem}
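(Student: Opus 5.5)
Part (a) is a degree count. Take $u\in\overline V_k$. Since $u\notin L$, we have $d(u)>(\alpha-3r\eta)n$. By \eqref{eq:e4}, $u$ has at most $d(u)/r$ neighbours in its own part; alternatively, $u\notin W$ gives $d_{V_k}(u)<2\eta n$ directly. The neighbours of $u$ in the $r-1$ other parts therefore number at least $(\alpha-3r\eta-2\eta)n$. Each part $V_j$ has at most $(\frac1r+\eta)n$ vertices. So, for a fixed $i\ne k$,
\[
d_{V_i}(u)\ \ge\ (\alpha-3r\eta-2\eta)n-(r-2)\Bigl(\tfrac1r+\eta\Bigr)n\ \ge\ \Bigl(\tfrac1r-(4r)\eta\Bigr)n .
\]
Passing from $V_i$ to $\overline V_i$ costs at most $|L\cup W|\le 2\eta n$ by \eqref{eq:e5}. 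Combining, $d_{\overline V_i}(u)\ge(\frac1r-5r\eta)n$, using $r\ge2$ to absorb the constants.

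Part (b) follows the same pattern, but we can no longer use $d_{V_k}(u)<2\eta n$. Instead we use \eqref{eq:e4} in its comparative form: $d_{V_k}(u)\le d_{V_j}(u)$ for every $j$. The term $d_{V_i}(u)$ needs a lower bound, and we get it as follows. Write $d(u)=d_{V_i}(u)+\sum_{j\ne i}d_{V_j}(u)$, and bound the at most $r-1$ terms with $j\ne i$, other than the own part $V_k$, by $(\frac1r+\eta)n$ each. The own-part term satisfies $d_{V_k}(u)\le d_{V_i}(u)$. This gives
\[
2d_{V_i}(u)\ge d(u)-(r-2)(\tfrac1r+\eta)n,
\]
so $d_{V_i}(u)\ge \frac12(\frac1r-O(r\eta))n\ge(\frac1{r^2}-4r\eta)n$ because $\frac1{2r}\ge\frac1{r^2}$ for $r\ge2$. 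Subtracting $2\eta n$ for $L\cup W$ then gives (b).

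The consequences are inclusion–exclusion inside $\overline V_i$. We have $|\overline V_i|\le(\frac1r+\eta)n$, and by (a) each $x_j$ misses at most $(\frac1r+\eta)n-(\frac1r-5r\eta)n=(5r+1)\eta n$ vertices of $\overline V_i$. Hence the common neighbourhood of $p\le f$ such vertices has size at least $|\overline V_i|-p(5r+1)\eta n$. Here we use the lower bound $|\overline V_i|\ge|V_i|-2\eta n\ge(\frac1r-3\eta)n$, which is also the final claim of the lemma. This gives \eqref{eq:e6} after absorbing the constant, with $\eta$ small in terms of $r,f$. Then $n/(2r)>f$ for large $n$.

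For \eqref{eq:e7}, $u$ has at least $(\frac1{r^2}-5r\eta)n$ neighbours in $\overline V_i$. At most $f(5r+1)\eta n$ of these are outside the common neighbourhood of the $x_j$, which leaves at least $n/(2r^2)$ for small $\eta$.

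The only delicate step is (b). There the own-part degree can be linear, and the maximality of the partition must be used in its comparative form $d_{V_k}(u)\le d_{V_i}(u)$ rather than through membership in $W$. The rest is bookkeeping of the constant in front of $\eta$. If the constants come out slightly larger than $5r$, I would tighten the bound on $|V_j|$ to $(\frac1r+3\sqrt\epsilon)n$, which is much smaller than $\eta n$.
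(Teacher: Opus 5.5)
Your proof is correct and is essentially the paper's. Part (a) is the same degree count: the own-part degree is below $2\eta n$, and you subtract the other $r-2$ parts and $|L\cup W|\le 2\eta n$. The bounds \eqref{eq:e6}--\eqref{eq:e7} come from the same inclusion--exclusion inside $\overline V_i$.

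In (b) you use the comparative form $d_{V_k}(u)\le d_{V_i}(u)$ of \eqref{eq:e4}, whereas the paper uses the averaged form $d_{V_k}(u)\le\frac1r d(u)$, which leads to $\alpha^2-\frac{r-2}{r}=\frac1{r^2}$. Both forms work, so your remark that the comparative form ``must'' be used is overstated. Your version even gives the stronger constant $\frac1{2r}$.

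There is one bookkeeping slip in \eqref{eq:e6}. You subtract $p(5r+1)\eta n$ from the lower bound $(\frac1r-3\eta)n$ on $|\overline V_i|$. At $p=f$ this only gives $\bigl(\frac1r-((5r+1)f+3)\eta\bigr)n$, not the stated middle term, and ``absorbing the constant'' does not recover it. Instead, start from $N_{\overline V_i}(x_1)$ and remove the at most $(5r+1)\eta n$ non-neighbours of each of $x_2,\dots,x_p$, exactly as you already do for \eqref{eq:e7}. This gives $\bigl(\frac1r-(5r+1)p\eta\bigr)n+\eta n$. The slip is harmless downstream, since only the bound $\frac n{2r}>f$ is used later.
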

	
	\begin{proof}
		(a) If $u\in\overline V_k$ then $d(u)>(\alpha-3r\eta)n$ and $d_{V_k}(u)<2\eta n$. By \eqref{eq:e5} and $|V_j|<(\frac1r+\eta)n$,
		\[
		\begin{aligned}
			d_{\overline V_i}(u)&\ge d(u)-d_{V_k}(u)-\sum_{j\ne i,k}|V_j|-|L\cup W|\\
			&>\Bigl(\alpha-(r-2)\tfrac1r\Bigr)n-(3r+2+(r-2)+2)\eta n
			\ge\Bigl(\frac1r-5r\eta\Bigr)n,
		\end{aligned}
		\]
		since $\alpha-(r-2)/r=1/r$. (b) If $u\in W_k\setminus L$ then by \eqref{eq:e4} $d_{V_k}(u)\le\frac1r d(u)$, so
		\[
		d_{\overline V_i}(u)\ge\bigl(1-\tfrac1r\bigr)d(u)-\sum_{j\ne i,k}|V_j|-|L\cup W|
		>\alpha(\alpha-3r\eta)n-(r-2)\bigl(\tfrac1r+\eta\bigr)n-2\eta n\ge\Bigl(\frac1{r^2}-5r\eta\Bigr)n,
		\]
		using $\alpha^2-(r-2)/r=1/r^2$. For the intersection bounds, recall that for finite sets $A_1,\ldots,A_p$ one has $\bigl|\bigcap_j A_j\bigr|\ge\sum_j|A_j|-(p-1)\bigl|\bigcup_j A_j\bigr|$. Applying this inside $\overline V_i$ with (a), and $|\overline V_i|\le(\frac1r+\eta)n$, gives \eqref{eq:e6}; applying it with (a) and (b) gives \eqref{eq:e7}. Both lower bounds exceed $f$ for small $\eta$ and large $n$. The last bound follows from $|V_i|>(\frac1r-\eta)n$ and \eqref{eq:e5}.
	\end{proof}
	
	\begin{lem}\label{cleanindep}
		For every $i\in[r]$, the graph $G[\overline V_i]$ is empty and $W_i\subseteq L$. Consequently $\overline V_i=V_i\setminus L$ and $G[V_i\setminus L]$ is empty.
	\end{lem}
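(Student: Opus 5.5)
Both assertions go by contradiction through the common-neighbourhood bounds of Lemma~\ref{cn}: an internal edge, or a high-internal-degree vertex outside $L$, lets us embed $F$ greedily, since $F$ is edge-color-critical. Fix a critical edge $e_0=ab$ of $F$, so that $F-e_0$ has a proper $r$-colouring $c$. In $c$, the endpoints $a$ and $b$ must receive the same colour, say colour $i$; otherwise $c$ would already properly colour $F$. Write $C_1,\dots,C_r$ for the colour classes of $c$.

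\emph{$G[\overline V_i]$ is empty.} Suppose $xy$ is an edge with $x,y\in\overline V_i$. Map $a\mapsto x$ and $b\mapsto y$, and embed the remaining vertices class by class. The vertices of $C_j$ with $j\ne i$ go into $\overline V_j$, and the remaining vertices of $C_i$ go into $\overline V_i$. We order the classes so that at each step the vertex being placed must be adjacent to at most $f$ already-placed vertices, all lying in parts other than its own. By \eqref{eq:e6}, the common neighbourhood in the target part of any $p\le f$ such vertices has more than $f$ elements, so an unused candidate always exists.

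One point needs care: the vertices $x,y$ lie in the same part $\overline V_i$, yet a vertex $z\in C_j$ with $j\ne i$ may need to be adjacent to both. This is fine, because \eqref{eq:e6} only requires the $x_j$ to lie outside the target part $\overline V_j$, and $\overline V_i\subseteq\bigcup_{k\ne j}\overline V_k$. Vertices of $C_i$ other than $a,b$ are placed in $\overline V_i$; their neighbours all lie in other classes, so again \eqref{eq:e6} applies. For a clean induction I would first embed all of $F-\{a,b\}$ except the $C_i$-part, and only afterwards check that every placed vertex is adjacent to whatever it needs. Processing vertices one at a time, with each new vertex chosen in the common neighbourhood of its previously placed neighbours, handles this. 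The result is a copy of $F$ in $G$ using the edge $xy$, a contradiction.

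\emph{$W_i\subseteq L$.} Suppose $u\in W_i\setminus L$. Then $d_{V_i}(u)\ge 2\eta n$. By \eqref{eq:e5}, $u$ has at least $\eta n$ neighbours in $\overline V_i$, so pick a neighbour $y\in\overline V_i$ and map $a\mapsto u$, $b\mapsto y$. Embed the rest as before, with classes $C_j$ ($j\ne i$) going into $\overline V_j$. Vertices adjacent to $a$ are now chosen using \eqref{eq:e7}: note that $u\in W_i\setminus L\subseteq\bigcup_{k\ne j}(W_k\setminus L)$ for each target part $j\ne i$. The vertices of $C_i$ other than $a$ and $b$ are placed in $\overline V_i$, and they are not adjacent to $a$ in $F-e_0$ because they share its colour, so \eqref{eq:e6} suffices for them.

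The only subtlety is that the image of $a$ is $u\notin\overline V_i$, and $u$ must not be reused. Since $u\in W\setminus L$, it lies outside every $\overline V_j$, so it is never among the candidates. Again this yields $F\subseteq G$, a contradiction.

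The consequences follow immediately. Since $W_i\subseteq L$, we get $\overline V_i=V_i\setminus(L\cup W)=V_i\setminus L$, and this set spans no edges by the first part.

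The main obstacle is organising the greedy embedding so that every vertex placed has at most $f$ prescribed neighbours, all in parts other than its target, with at most one of them of the exceptional type $u$. Since $f=|V(F)|$ bounds all of these counts, the bounds \eqref{eq:e6} and \eqref{eq:e7} with the margin $>f$, which also avoids reusing vertices, cover every step.
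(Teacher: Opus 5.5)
Your proof is the paper's argument. You take a critical edge $ab$ with $a,b$ in one colour class of an $r$-colouring of $F-ab$, map $ab$ onto the internal edge $xy$ (resp.\ onto $u$ and a neighbour $y\in\overline V_i$), and embed the rest of $F$ greedily part by part using \eqref{eq:e6}, and \eqref{eq:e7} for vertices that must be adjacent to $u$.

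One small slip in the second part. \eqref{eq:e5} alone gives only $d_{\overline V_i}(u)\ge 2\eta n-2\eta n$, not $\eta n$. Use $|L|\le\eta n$ and $|W|\le\eta^2 n$ (Lemma~\ref{Wbound}) to get $d_{\overline V_i}(u)\ge\eta n(1-\eta)>0$, as the paper does. Since you need only one such neighbour $y$, nothing else changes.
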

	
	\begin{proof}
		Let $ab\in E(F)$ be a critical edge, so $\chi(F-ab)=r$; fix a proper $r$-coloring $\varphi:V(F)\to[r]$ of $F-ab$. Then $\varphi(a)=\varphi(b)$, since otherwise $\varphi$ would properly $r$-color $F$; after relabeling colors assume $\varphi(a)=\varphi(b)=1$.
		
		Suppose first that $xy\in E(G[\overline V_i])$ for some $i$. Assign color $1$ to $\overline V_i$ and the remaining $r-1$ colors bijectively to the other clean parts, and set $a\mapsto x$, $b\mapsto y$. Embed the remaining vertices of $F$ greedily in the order of a fixed enumeration, sending each to its prescribed clean part. When a vertex is embedded it has at most $f-1$ previously embedded neighbors, all in clean parts other than its target part; by \eqref{eq:e6} they have more than $f$ common neighbors in the target part, so an unused image is available. The resulting copy realizes all edges of $F-ab$, and $ab$ is realized by $xy$, giving $F\subseteq G$, a contradiction. Hence every $G[\overline V_i]$ is empty.
		
		Next suppose $u\in W_i\setminus L$ for some $i$. Then $d_{V_i}(u)\ge2\eta n$, so by Lemma~\ref{Wbound} and $|L|\le\eta n$, $d_{\overline V_i}(u)\ge 2\eta n-(\eta n+\eta^2 n)=\eta n(1-\eta)>0$; choose $x\in N_{\overline V_i}(u)$. Set $a\mapsto u$, $b\mapsto x$, and embed the rest of $F$ greedily as above, using \eqref{eq:e7} when the new vertex must also be adjacent to $u$ and \eqref{eq:e6} otherwise. This yields $F\subseteq G$, a contradiction. Hence $W_i\subseteq L$ for every $i$, and the final statement follows.
	\end{proof}
	
	\begin{lem}\label{Lne}
		$L\ne\emptyset$.
	\end{lem}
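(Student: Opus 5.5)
We argue by contradiction: assume $L=\emptyset$. Then Lemma~\ref{cleanindep} gives $\overline V_i=V_i\setminus L=V_i$ for every $i$, and each $G[V_i]$ is empty. Consequently $V_1\cup\cdots\cup V_r$ is a proper $r$-coloring of $G$, so $\chi(G)\le r$. This contradicts the standing assumption $G\in\mathrm{EX}_{r+1,\rho}(n,F)$, which requires $G$ to be non-$r$-partite.

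The main points to verify are the following.

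\emph{Emptiness of $W$.} Lemma~\ref{cleanindep} asserts $W_i\subseteq L$ for each $i$, so $L=\emptyset$ forces $W=\emptyset$. Hence $\overline V_i=V_i\setminus(L\cup W)=V_i$ for every $i$.

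\emph{Independence of the whole parts.} The independence statement in Lemma~\ref{cleanindep} concerns $\overline V_i$, which now equals $V_i$. So every edge of $G$ joins two distinct parts, and the partition $\{V_1,\dots,V_r\}$ is proper.

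\emph{Nondegeneracy of the hypotheses.} The lemmas invoked rely only on $n$ being large and $\epsilon$ being small. They do not depend on $L$ being nonempty; the bounds \eqref{eq:e6} and \eqref{eq:e7} hold trivially, or a fortiori, when $L=\emptyset$.

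There is no real obstacle. The only point needing care is that the proof of Lemma~\ref{cleanindep} remains valid in the degenerate case $L=\emptyset$. In particular, the greedy embedding there uses only the clean parts $\overline V_i$, and these are then the full parts $V_i$, which are of linear size by Lemma~\ref{cn}. So the argument reduces to a one-line chromatic contradiction.
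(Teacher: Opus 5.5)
Your proposal is correct and is essentially the paper's argument: Lemma~\ref{cleanindep} gives $W\subseteq L=\emptyset$, so $\overline V_i=V_i$ is independent for every $i$. Hence $G$ would be $r$-partite, contradicting that it is non-$r$-partite.
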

	
	\begin{proof}
		If $L=\emptyset$, then $W\subseteq L=\emptyset$ by Lemma~\ref{cleanindep}, so $\overline V_i=V_i$ and each $V_i$ is independent. Then $G$ is $r$-partite, contradicting that $G$ is non-$r$-partite.
	\end{proof}

		\begin{rem}\label{rem:edge-cleaning}
			Apart from the standing assumptions that $F$ is fixed and edge-color-critical and that the host is $F$-free and non-$r$-partite, the proofs of Lemmas~\ref{L3}~-~\ref{Lne} use spectral extremality only through the bound $\rho(G)\ge(1-\tfrac1r)n-o(n)$ and the resulting maximum-cut partition supplied by Theorem~\ref{dkl}. Consequently, the same arguments remain valid for any host satisfying these assumptions and this spectral bound. In particular, the bound follows from $|E(H)|\ge t_r(n)-n$ via $\rho(H)\ge2|E(H)|/n$. We use this observation both in the endpoint argument of Section~\ref{sec:t3-one} and in the edge-extremal argument of Section~\ref{sec:multipartite}.
		\end{rem}

	\begin{lem}\label{chiGu}
		Let $\boldsymbol x=(x_v)_{v\in V(G)}$ be a positive Perron vector of $G$, and choose $z,u\in V(G)$ with $x_z=\max_v x_v$ and $x_u=\min_v x_v$. Then
		\[
		\chi(G-u)=r.
		\]
	\end{lem}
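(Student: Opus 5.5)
We argue by contradiction. Suppose $\chi(G-u)>r$, so that $G-u$ is already non-$r$-partite. We rewire $u$ alone: delete all edges at $u$ and join $u$ to a set $S$ of clean vertices chosen so that three things hold. First, the new graph $G'$ is still non-$r$-partite. Second, $G'$ is still $F$-free. Third, $\rho(G')>\rho(G)$. Together these contradict $G\in\mathrm{EX}_{r+1,\rho}(n,F)$. Non-$r$-partiteness is automatic, since $G'-u=G-u$. The upper bound $\chi(G-u)\le r$ then gives $\chi(G-u)=r$, because $G-u$ contains a copy of $F$ minus a vertex only in trivial ways. More directly, $\chi(G-u)\ge r$: by Lemma~\ref{cn} the clean parts contain $K_r$.

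\textbf{Step 1: locate $z$.} We show $z$ is clean. From $\rho x_z=\sum_{v\in N(z)}x_v\le d(z)x_z$ we get $d(z)\ge\rho\ge(\alpha-o(1))n$, so $z\notin L$. Since $W\subseteq L$ by Lemma~\ref{cleanindep}, $z\in\overline V_i$ for some $i$. Put
\[
S:=N_G(z)\setminus(L\cup\{u\}).
\]
Because $G[V_i\setminus L]$ is empty (Lemma~\ref{cleanindep}), we have $S\subseteq\bigcup_{k\ne i}\overline V_k$. Moreover $N(z)\setminus S\subseteq L\cup\{u\}$, which has size at most $\eta n+1$. Hence
\[
\sum_{v\in S}x_v\ \ge\ (\rho-\eta n-1)\,x_z .
\]

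\textbf{Step 2: $x_u$ is noticeably smaller than $x_z$.} By Lemma~\ref{Lne} there is some $w\in L$. Then
\[
\rho x_u\le\rho x_w\le d(w)x_z\le(\alpha-3r\eta)n\,x_z .
\]
Combining this with $\rho\ge(\alpha-o(1))n$ from Lemma~\ref{L1}, we get
\[
\rho x_u\le\bigl(\rho-3r\eta n+o(n)\bigr)x_z<(\rho-\eta n-1)\,x_z ,
\]
since $3r>1$.

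\textbf{Step 3: the spectral gain.} By the Rayleigh quotient,
\[
\rho(G')\ \ge\ \boldsymbol x^{\mathsf T}A(G')\boldsymbol x
=\rho+2x_u\Bigl(\sum_{v\in S}x_v-\sum_{v\in N_G(u)}x_v\Bigr).
\]
Here $\sum_{v\in N_G(u)}x_v=\rho x_u$. By Steps 1 and 2 the bracket is strictly positive, so $\rho(G')>\rho(G)$.

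\textbf{Step 4: $G'$ is $F$-free.} This is the step I expect to need the most care. Suppose $G'$ contains a copy $F'$ of $F$. Then $u\in V(F')$, since $G'-u=G-u$ is $F$-free. The neighbours of $u$ in $F'$ number at most $f-1$, and they all lie in $\bigcup_{k\ne i}\overline V_k$. By \eqref{eq:e6} they have more than $f$ common neighbours in $\overline V_i$, so we can choose such a common neighbour $y\notin V(F')\cup\{u\}$. Replacing $u$ by $y$ gives a copy of $F$ in $G-u\subseteq G$, a contradiction.

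The main points to check carefully are the following. The constant comparison in Step 2 must beat the $\eta n$ loss from $L$ in Step 1; it does, with room to spare, because $L$ is defined with the factor $3r\eta$. In Step 4 we must make sure that $u$ itself, which may lie in $\overline V_i$ or in $L$, never plays the role of the replacement vertex; this is handled by choosing $y\ne u$.
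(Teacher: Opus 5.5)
Your proof is correct and is essentially the paper's argument. The paper also assumes $\chi(G-u)>r$, rewires $u$ to clean vertices outside the part of $z$, bounds $\rho x_u$ by comparing with a vertex of $L$, and proves $F$-freeness by the same replacement argument via \eqref{eq:e6}; the only difference is that it takes the new neighbourhood to be all of $X=(\bigcup_{i\ne i_0}\overline V_i)\setminus\{u\}$ rather than your $S=N_G(z)\setminus(L\cup\{u\})$, with the weight bound obtained from the eigen-equation at $z$ in exactly the same way.

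Do delete the meaningless clause about ``a copy of $F$ minus a vertex''. Your $K_r$ argument for $\chi(G-u)\ge r$ does work, but the paper uses the simpler fact that deleting one vertex from the non-$r$-partite $G$ lowers $\chi$ by at most one.
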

	
	\begin{proof}
		From $\rho(G)x_z=\sum_{v\in N(z)}x_v\le d(z)x_z$ we get $d(z)\ge\rho(G)\ge(\alpha-\eta)n$ by \eqref{eq:e2}, so $z\notin L$; as $W\subseteq L$ (Lemma~\ref{cleanindep}), $z\notin W$. Thus $z\in\overline V_{i_0}$ for some $i_0$, and by Lemma~\ref{cleanindep} $z$ has no neighbor in $\overline V_{i_0}$. Since $L\cup W=L$, 
		\[
		\rho(G)x_z=\sum_{v\in N(z)}x_v\le |L|\,x_z+\sum_{i\ne i_0}\sum_{v\in\overline V_i}x_v,
		\]
		whence, using $|L|\le\eta n$ and $\rho(G)\ge(\alpha-\eta)n$,
		\begin{equation}
			\sum_{i\ne i_0}\sum_{v\in\overline V_i}x_v\ge(\alpha-2\eta)n\,x_z.
			\label{eq:e8}
		\end{equation}
		By Lemma~\ref{Lne} pick $s\in L$. Minimality of $x_u$ gives $x_u\le x_s$, so
		\begin{equation}
			\sum_{v\in N(u)}x_v=\rho(G)x_u\le\rho(G)x_s=\sum_{v\in N(s)}x_v\le d(s)x_z\le(\alpha-3r\eta)n\,x_z.
			\label{eq:e9}
		\end{equation}
		Let $X:=\bigl(\bigcup_{i\ne i_0}\overline V_i\bigr)\setminus\{u\}$. By \eqref{eq:e8}, for large $n$,
		\begin{equation}
			\sum_{v\in X}x_v\ge(\alpha-2\eta)n\,x_z-x_z>(\alpha-3r\eta)n\,x_z\ge\sum_{v\in N(u)}x_v.
			\label{eq:e10}
		\end{equation}
		Form $G'$ with $E(G')=E(G-u)\cup\{uv:v\in X\}$. We claim $G'$ is $F$-free. If $F'\subseteq G'$, then since $G$ is $F$-free and only edges at $u$ were changed, $u\in V(F')$; its neighbors $y_1,\ldots,y_q$ in $F'$ satisfy $q\le f-1$ and $y_j\in X\subseteq\bigcup_{i\ne i_0}\overline V_i$. By \eqref{eq:e6}, $\bigl|\bigcap_{j}N_{\overline V_{i_0}}(y_j)\bigr|\ge\frac n{2r}>f$, so some $w\in\overline V_{i_0}\setminus V(F')$ is adjacent to all $y_j$; replacing $u$ by $w$ gives $F\subseteq G$, a contradiction. Hence $G'$ is $F$-free.
		
		Suppose $\chi(G-u)\ge r+1$. As $G-u\subseteq G'$, the graph $G'$ is non-$r$-partite. By the Rayleigh quotient and \eqref{eq:e10},
		\[
		\rho(G')-\rho(G)\ge\frac{\boldsymbol x^{\mathsf T}(A(G')-A(G))\boldsymbol x}{\boldsymbol x^{\mathsf T}\boldsymbol x}
		=\frac{2x_u}{\boldsymbol x^{\mathsf T}\boldsymbol x}\Bigl(\sum_{v\in X}x_v-\sum_{v\in N(u)}x_v\Bigr)>0,
		\]
		contradicting the spectral extremality of $G$. Therefore $\chi(G-u)\le r$. Since $\chi(G)>r$ and deleting one vertex lowers the chromatic number by at most one, $\chi(G-u)\ge r$. Hence $\chi(G-u)=r$.
	\end{proof}

		\begin{rem}\label{rem:perron-transfer}
			The preceding cleaning and Perron switching arguments give the following transfer principle. Let $J$ be a fixed connected edge-color-critical graph with $\chi(J)=r+1$. For all sufficiently large $n$, if $H\in\mathrm{EX}_{r+1,\rho}(n,J)$ satisfies $\rho(H)>(1-\tfrac1r)(n-r-1)$, then $H$ is connected, and every vertex $u$ of minimum Perron coordinate satisfies $\chi(H-u)=r$. Indeed, this assumption is $\rho(H)\ge(1-\tfrac1r)n-O_r(1)$, which supplies the stability and cleaning input of Remark~\ref{rem:edge-cleaning}; it also replaces \eqref{eq:e2} in the proofs of Lemmas~\ref{lem:connected} and~\ref{chiGu}.
		\end{rem}

	\section{Proof of Theorem \ref{main}}\label{sec:proof}

	We retain the setup and notation of Section~\ref{sec:def}. In particular, by Lemma~\ref{chiGu} there is a vertex $u$ with $\chi(G-u)=r$, and $V_1,\dots,V_r$ denotes the partition of Lemma~\ref{L3}.

	Throughout the rest of this section all constants hidden in $O(1)$ depend only on $F$, $r$ and the constant $a$ in \eqref{eq:e1}. Let $ab$ be a critical edge of $F$ and fix a proper $r$-coloring
	\[
	\varphi:V(F-ab)\to[r]
	\]
	with $\varphi(a)=\varphi(b)$. Such a coloring exists because $\chi(F-ab)=r$ and, if the two ends of $ab$ had different colors, it would also color $F$ with $r$ colors.
	
	For a vertex $v\in V_i$ put
	\[
	m(v)=\bigl|\{w\in V(G)\setminus V_i:vw\notin E(G)\}\bigr|.
	\]
	For a large constant $C=C(F,r,a)$, to be fixed successively, call $v\in V_i$ $C$-clean if
	\[
	d_{V_i}(v)=0\qquad\text{and}\qquad m(v)\le C.
	\]
	Let
	\[
	Q_i=\{v\in V_i:v\text{ is }C\text{-clean}\},\qquad Q=\bigcup_{i=1}^r Q_i.
	\]
	Thus vertices of $Q_i$ have degree $n-|V_i|-O(1)$; the clean sets $Q_1,\dots,Q_r$ carry no internal edges and, by Lemma~\ref{cn}, have the large common neighborhoods recorded in~\eqref{eq:e6}. Let $\mu=\max_{v}x_v$.
	
	\begin{lem}\label{lem:heavy}
		Put $H_i:=V_i\setminus (L\cup \{u\})$ and call the vertices of $H:=\bigcup_iH_i$ \emph{heavy}. Then:
		\begin{enumerate}
			\item[(a)] each $G[H_i]$ is empty, and every $v\in H_i$ has $m(v)=O(\eta n)$, hence at least $\bigl(\tfrac1r-O(\eta)\bigr)n$ neighbors in each $H_j$ $(j\ne i)$;
			\item[(b)] any at most $f$ heavy vertices lying in distinct parts have more than $f$ common heavy neighbors in each remaining part;
			\item[(c)] $x_v=(1-O(\eta))\mu$ for every $v\in H$.
		\end{enumerate}
	\end{lem}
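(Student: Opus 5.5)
Parts (a) and (b) are essentially already contained in the cleaning lemmas; part (c) needs a short Perron-vector argument.

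\emph{Part (a).} By Lemma~\ref{cleanindep}, $G[V_i\setminus L]$ is empty, so $G[H_i]\subseteq G[V_i\setminus L]$ is empty. Since $W\subseteq L$, we have $H_i\subseteq\overline V_i$. Every $v\in H_i$ lies outside $L$, so $d(v)>(\alpha-3r\eta)n$, and $v$ has no neighbours inside $V_i$. The number of vertices outside $V_i$ is at most $n-(\tfrac1r-\eta)n$, so
\[
m(v)\le n-|V_i|-d(v)\le\bigl(1-\tfrac1r+\eta\bigr)n-(\alpha-3r\eta)n=O(\eta n).
\]
Since $|V_j|\ge(\tfrac1r-\eta)n$ and $|V_j\setminus H_j|\le|L|+1\le\eta n+1$, we get $d_{H_j}(v)\ge|H_j|-m(v)\ge(\tfrac1r-O(\eta))n$ for every $j\ne i$. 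This is also Lemma~\ref{cn}(a) with the single vertex $u$ removed.

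\emph{Part (b).} Take at most $f$ heavy vertices in distinct parts and a remaining part $H_i$. Each of them misses at most $O(\eta n)$ vertices of $H_i$. Hence their common neighbourhood in $H_i$ has size at least $|H_i|-f\cdot O(\eta n)\ge(\tfrac1r-O(f\eta))n>f$. Alternatively, this follows directly from \eqref{eq:e6} after discarding $u$. Either way we need $\eta$ small in terms of $r$ and $f$, which is the standing convention.

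\emph{Part (c).} The upper bound $x_v\le\mu$ is trivial, so I only need $x_v\ge(1-O(\eta))\mu$. Let $z$ be the vertex with $x_z=\mu$. As in the proof of Lemma~\ref{chiGu}, $z\in\overline V_{i_0}$ for some $i_0$, and \eqref{eq:e8} gives
\[
\sum_{i\ne i_0}\sum_{w\in\overline V_i}x_w\ge(\alpha-2\eta)n\mu.
\]
Each $\overline V_i$ has at most $(\tfrac1r+\eta)n$ vertices, each with coordinate at most $\mu$. Subtracting, the total mass in any one part satisfies $\sum_{w\in\overline V_i}x_w\ge(\tfrac1r-O(\eta))n\mu$ for every $i\ne i_0$.

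For $i_0$ itself I repeat the argument from a vertex $z'\in\overline V_j$ with $j\ne i_0$. I first need a lower bound on $x_{z'}$. Consider any heavy $v\in H_j$. Then
\[
\rho x_v\ge\sum_{i\ne j}\sum_{w\in N_{\overline V_i}(v)}x_w\ge\sum_{i\ne j}\Bigl(\sum_{w\in\overline V_i}x_w-O(\eta n)\mu\Bigr).
\]
This requires knowing the mass in all parts other than $j$, including $i_0$.

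To close this circularity, I bound the mass of $\overline V_{i_0}$ using a vertex $y\in\overline V_{i_1}$, $i_1\ne i_0$, whose coordinate is shown to be large first. The chain is as follows.
\begin{itemize}
\item By the averaging above, some $y\in\overline V_{i_1}$ has $x_y\ge(1-O(\eta))\mu$.
\item Its eigen-equation, together with $\rho\ge(\alpha-\eta)n$, gives total mass at least $(\alpha-O(\eta))n\mu$ outside $\overline V_{i_1}$.
\item Combined with the per-part upper bounds, this yields mass at least $(\tfrac1r-O(\eta))n\mu$ in $\overline V_{i_0}$.
\end{itemize}

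Now every part has mass at least $(\tfrac1r-O(\eta))n\mu$. For heavy $v\in H_j$, the displayed bound becomes $\rho x_v\ge(\alpha-O(\eta))n\mu$. Since $\rho\le\alpha n+O(1)$ by the spectral Tur\'an bound, this gives $x_v\ge(1-O(\eta))\mu$.

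\emph{Main obstacle.} The only delicate point is this bootstrapping in (c): the averaging over parts must produce a high-coordinate vertex in a part different from $i_0$. I would handle it as just described, by averaging over $\overline V_{i_1}$, $i_1\ne i_0$.

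Finally, the contribution of $L$ to any neighbourhood sum is at most $\eta n\mu$, so it is absorbed into the $O(\eta)$ error terms throughout.
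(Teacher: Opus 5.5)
Your proof is correct. Parts (a) and (b) match the paper, but (c) takes a different route.

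\textbf{How the two proofs of (c) differ.} The paper first shows that $\boldsymbol x$ is nearly constant on each $H_i$. Two heavy vertices of one part have neighbourhoods differing in $O(\eta n)$ vertices, so $\rho|x_v-x_{v'}|=O(\eta n)\mu$. It then equalizes the per-part levels $\xi_i$ by subtracting summed eigen-equations, $(\rho+\tfrac nr)(\xi_i-\xi_{i'})=O(\eta n\mu)$, and anchors everything at $x_z=\mu$. This uses only the lower bound $\rho\ge(\alpha-\eta)n$.

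You work with part masses instead. The bound \eqref{eq:e8} at $z$ controls the mass of every part except $i_0$. Averaging then produces a near-maximal vertex $y$ in another part, and the eigen-equation at $y$ transfers the same mass bound to part $i_0$. A one-sided eigen-equation estimate at each heavy vertex finishes. Your route avoids the two-sided within-part comparison and the bookkeeping of the levels $\xi_i$. The price is that it needs an upper bound on $\rho$, which the paper's route does not.

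\textbf{Two small repairs.}

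In (a), a vertex $v\in H_i$ may have neighbours inside $V_i$. Lemma~\ref{cleanindep} only makes $G[V_i\setminus L]$ empty, so $v$ can be adjacent to vertices of $L\cap V_i$. The correct identity is $m(v)=(n-|V_i|)-d(v)+d_{V_i}(v)$ with $d_{V_i}(v)\le|L|\le\eta n$. This only adds $\eta n$ to your bound, as in the paper.

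In (c), the bound $\rho\le\alpha n+O(1)$ does not follow from the classical spectral Tur\'an theorem, because $G$ need not be $K_{r+1}$-free. Indeed, the extremal graphs for $K_{1,1,t_3,\ldots,t_{r+1}}$ contain $K_{r+1}$. The bound is true for colour-critical $F$ by Nikiforov's spectral theorem, but that result is not among the paper's tools. You only need $\rho\le(\alpha+O(\eta))n$, and this is immediate. First, $\rho\mu=\rho x_z\le d(z)\mu$. Second, $N(z)\cap V_{i_0}\subseteq L$, so $d(z)\le(n-|V_{i_0}|)+|L|\le(\alpha+2\eta)n$.
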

	
	\begin{proof}
		Since $W\subseteq L$ (Lemma~\ref{cleanindep}), $H_i=V_i\setminus (L\cup \{u\})=\overline V_i$, so $G[H_i]$ is empty. For $v\in H_i$ we have $d(v)>(\alpha-3r\eta)n$ and $d_{V_i}(v)\le|L|\le\eta n$, whence
		\[
		m(v)=d_{V_i}(v)+(n-|V_i|)-d(v)\le\eta n+(\alpha+3\sqrt\varepsilon)n-(\alpha-3r\eta)n=O(\eta n),
		\]
		using $n-|V_i|\le(\alpha+3\sqrt\varepsilon)n$; thus $d_{H_j}(v)\ge|H_j|-m(v)\ge(\tfrac1r-O(\eta))n$ for $j\ne i$, proving (a). For (b), deleting the at most $O(\eta n)$ non-neighbors of the chosen vertices from a target part leaves $(\tfrac1r-O(\eta))n>f$ common heavy neighbors. For (c), the maximum-weight vertex $z$ satisfies $z\notin L$ (Lemma~\ref{chiGu}), so $z\in H$ and $x_z=\mu$. If $v,v'\in H_i$ then $N(v)\triangle N(v')$ has at most $m(v)+m(v')+2|L|=O(\eta n)$ vertices, so $\rho|x_v-x_{v'}|\le O(\eta n)\mu$ and $x_v=\xi_i+O(\eta)\mu$ for a common value $\xi_i$. Summing the eigen-equation at $v\in H_i$ and using $\sum_{w\in H_j}x_w=\tfrac nr\xi_j+O(\eta n\mu)$,
		\[
		\rho\,\xi_i=\sum_{j\ne i}\tfrac nr\,\xi_j+O(\eta n\mu),
		\]
		and subtracting the corresponding identity for $i'$ gives $(\rho+\tfrac nr)(\xi_i-\xi_{i'})=O(\eta n\mu)$, so $\xi_i-\xi_{i'}=O(\eta\mu)$. With $x_z=\mu$ this forces $\xi_i=(1-O(\eta))\mu$ for every $i$.
	\end{proof}
	
	\begin{lem}\label{lem:rough-spectral}
		Let $F$ be $s$-embeddable and let $G\in\operatorname{EX}_{r+1,\rho}(n,F)$; let $\boldsymbol x$ be its Perron vector with $\mu=\max_v x_v$, and let $u$ be the minimum-weight vertex of Lemma~\ref{chiGu}, so that $\chi(G-u)=r$. Let $W_1,\dots,W_r$ be the color classes of $G-u$ and put $W_i'=N(u)\cap W_i$. Then:
		\begin{enumerate}
			\item each $W_i$ is independent, $H_i\subseteq W_i$ after relabeling, $\bigl||W_i|-n/r\bigr|<3\sqrt\varepsilon\,n$, and each $W_i'$ is non-empty;
			\item every set of at most $f$ clean vertices in distinct parts has more than $f$ common clean neighbors in each remaining part \textup{(estimate~\eqref{eq:e6})}.
		\end{enumerate}
	\end{lem}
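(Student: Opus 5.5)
We need to establish four facts for part 1: each $W_i$ is independent, the heavy sets $H_i$ lie inside the classes $W_i$ after relabeling, each $|W_i|$ is close to $n/r$, and each $W_i'$ is nonempty. Part 2 will follow from estimate~\eqref{eq:e6}.

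Independence of $W_i$ holds because the $W_i$ are colour classes of a proper $r$-colouring of $G-u$, which exists by Lemma~\ref{chiGu}. Next we match the heavy parts to colour classes. By Lemma~\ref{lem:heavy}(b), any $r$ heavy vertices $h_1\in H_1,\dots,h_r\in H_r$ can be extended greedily to a $K_r$ on heavy vertices in distinct parts. The vertices of such a $K_r$ receive $r$ distinct colours. Now fix $v,v'\in H_i$ and suppose they received different colours. Lemma~\ref{lem:heavy}(b) lets us choose heavy common neighbours of $v$ and $v'$, one in each $H_j$ with $j\ne i$, forming a $K_{r-1}$. Each of $v$ and $v'$ completes this $K_{r-1}$ to a $K_r$, so each must avoid all $r-1$ colours used on it. Both $v$ and $v'$ therefore carry the single remaining colour, a contradiction.

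Hence each $H_i$ is monochromatic. Distinct $H_i$ receive distinct colours, because an $r$-clique can be chosen with one vertex in every $H_i$. After relabeling the colours, $H_i\subseteq W_i$.

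For the sizes, note that $W_i\setminus H_i\subseteq L\cup\{u\}$ and $H_i\subseteq W_i$. It follows that $\bigl||W_i|-|V_i|\bigr|\le|L|+1\le\eta n+1$. Combined with Lemma~\ref{L3}, this gives $\bigl||W_i|-n/r\bigr|\le 3\sqrt\varepsilon n+\eta n+1$.

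This bound is slightly weaker than the stated $3\sqrt\varepsilon\,n$. To get exactly the stated constant I would rerun the size argument of Lemma~\ref{L3} on the partition $\{W_1\cup\{u\},W_2,\dots,W_r\}$. That partition has few internal edges (only those at $u$), so the quadratic-deficit computation of Lemma~\ref{L3} yields the same bound. Since $\varepsilon$ is arbitrary, it is harmless to state the bound with the weaker constant instead.

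Nonemptiness of $W_i'$ is the key step. Suppose $W_i'=\emptyset$ for some $i$. Then $u$ has no neighbour in $W_i$, so we may place $u$ into $W_i$, and $W_i\cup\{u\}$ stays independent. This gives a proper $r$-colouring of $G$, contradicting that $G$ is non-$r$-partite. So every $W_i'$ is nonempty.

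For part 2, by Lemma~\ref{cleanindep} the clean sets $\overline V_i=V_i\setminus L$ are independent. Estimate~\eqref{eq:e6} then applies directly to vertices in distinct parts, giving more than $f$ common neighbours in each remaining clean part. We need these common neighbours to be clean themselves, and they are: \eqref{eq:e6} counts neighbours inside $\overline V_i$, and every vertex of $\overline V_i$ is clean. If clean is read as $C$-clean, we intersect further with $Q_i$. Only $O(\eta n)$ vertices of $\overline V_i$ are excluded, so more than $f$ remain.

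I expect the main obstacle to be matching colour classes to heavy parts via cliques. The delicate point is that every pair inside $H_i$ needs a common clique in the other parts, and Lemma~\ref{lem:heavy}(b) supplies exactly this.
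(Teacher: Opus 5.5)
Your proof is correct and follows the paper's argument step for step: independence by definition, matching the heavy sets to colour classes, the size bound by rerunning Lemma~\ref{L3} on $\{W_1\cup\{u\},W_2,\dots,W_r\}$ (exactly the paper's route, so your weaker intermediate bound is unnecessary), non-emptiness of $W_i'$ by recolouring $u$, and item~2 as a direct citation of \eqref{eq:e6}. The only variation is that you force each $H_i$ to be monochromatic via a common $K_{r-1}$ in the other heavy parts, instead of the paper's ``bulk of $H_j$'' argument; for $v,v'$ in the same part, cite Lemma~\ref{lem:heavy}(a) rather than (b), since (b) is stated for vertices in distinct parts. Drop the aside on $C$-clean vertices: nothing proved so far bounds the number of vertices of $\overline V_i$ with $m(v)>C$ by $O(\eta n)$, and ``clean'' here means $\overline V_i$.
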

	
	\begin{proof}
		\emph{Item 1.} Each color class $W_i$ of $G-u$ is independent by definition. By Lemma~\ref{lem:heavy} the heavy sets $H_1,\dots,H_r$ are independent, and any $v\in H_i$ is adjacent to all but $O(\eta n)$ vertices of each $H_j$ $(j\ne i)$; so a vertex of $H_i$ cannot lie in the color class holding the bulk of any $H_j$, forcing each $H_i$ to be monochromatic with distinct $H_i$ in distinct classes. Relabeling gives $H_i\subseteq W_i$. As the $W_i$ are independent, the partition $\{W_1\cup\{u\},W_2,\dots,W_r\}$ carries at most $|N(u)|\le n\le\varepsilon n^2$ internal edges, so Lemma~\ref{L3} gives $\bigl||W_i|-n/r\bigr|<3\sqrt\varepsilon\,n$. Finally, if $N(u)\cap W_j=\varnothing$ for some $j$, then assigning color $j$ to $u$ would properly $r$-color $G$, contradicting that $G$ is non-$r$-partite; hence each $W_i'\ne\varnothing$.
		
		\emph{Item 2} is estimate~\eqref{eq:e6} of Lemma~\ref{cn}.
	\end{proof}

	We now fix the attachment pattern of $u$. Throughout the rest of this section we normalize $\lVert\boldsymbol x\rVert_2=1$, so that $\boldsymbol x^{\mathsf T}A(G)\boldsymbol x=\rho(G)$, and we relabel the parts of Lemma~\ref{lem:rough-spectral} so that, writing $P_i:=N(u)\cap H_i$ and $h_i:=|P_i|$,
	\[
	h_1\le h_2\le\cdots\le h_r.
	\]
	For $w\in W_d$ let $\operatorname{def}(w):=\bigl|\{v\in H\setminus W_d: vw\notin E(G)\}\bigr|$ denote the number of heavy vertices outside the part of $w$ that are not adjacent to $w$; by Lemma~\ref{lem:heavy}(a), $\operatorname{def}(v)\le m(v)=O(\eta n)$ for every heavy $v$. The next lemma is the corrected form of the embedding step: it produces a copy of $F$ through condition~$(E)$ unless the vertex used in the role of $Z_2$ is itself far from complete to the heavy sets.
	
	\begin{lem}[Embedding]\label{lem:embed}
		Let $c,d\in[r]$ be distinct and suppose $h_j\ge\frac n{4r}$ for every $j\in[r]\setminus\{c,d\}$. Then for every $w\in W_d'$, at least one of the following holds:
		\begin{enumerate}
			\item[(a)] $|N(w)\cap P_c|\le s-1$;
			\item[(b)] $\operatorname{def}(w)\ \ge\ \min\Bigl(\min_{j\in[r]\setminus\{c,d\}}h_j,\ |H_c|\Bigr)-\gamma n$,
		\end{enumerate}
		with the convention that a minimum over an empty index set is $+\infty$. In particular, every $v\in P_d$ satisfies $|N(v)\cap P_c|\le s-1$.
	\end{lem}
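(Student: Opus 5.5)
We argue by contradiction, applying condition~$(E)$ of Definition~\ref{def:embeddable} with $u_0:=u$. Fix $w\in W_d'$ and suppose both (a) and (b) fail. Then $w$ has at least $s$ neighbors $y_1,\dots,y_s\in P_c$, and
\[
\operatorname{def}(w)<\min\Bigl(\min_{j\ne c,d}h_j,\ |H_c|\Bigr)-\gamma n .
\]
We will build a complete $r$-partite subgraph with parts $Z_1,\dots,Z_r$, each of size at least $k$, roles assigned as follows. The vertex $u$ plays $u_0$. $Z_1$ lies in $H_c$ and contains $y_1,\dots,y_s$. $Z_2$ lies in $W_d$ and contains $w$. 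For each $j\notin\{c,d\}$, the part $Z_j$ lies inside $P_j=N(u)\cap H_j$. Then $u$ has at least $s$ neighbors in $Z_1$, the neighbor $w$ in $Z_2$, and is complete to $\bigcup_{j\ne c,d}Z_j$. Condition~$(E)$ then gives $F\subseteq G$, contradicting $F$-freeness.

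To build the $Z_j$ with all crossing edges present, we choose vertices greedily, one at a time, maintaining common adjacency to everything already chosen. The seeds $w,y_1,\dots,y_s$ must be mutually compatible: the $y_i$ are adjacent to $w$ by choice, and $Z_1$ must be independent, which holds since $y_i\in H_c$ and $G[H_c]$ is empty by Lemma~\ref{lem:heavy}(a).

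For $j\ne c,d$ we pick the remaining vertices of $Z_j$ inside $P_j$. They must avoid:
\begin{itemize}
\item[(i)] the non-neighbors of $w$ in $P_j$, of which there are at most $\operatorname{def}(w)<h_j-\gamma n$;
\item[(ii)] the non-neighbors of the previously chosen heavy vertices, at most $O(\eta n)$ per vertex by Lemma~\ref{lem:heavy}(a).
\end{itemize}
Since $\gamma=\sqrt\eta\gg O(\eta)$ and only a bounded number (about $rk$) of vertices are chosen, at least $\gamma n-O(k\eta n)>0$ candidates survive at each step. The extra vertices of $Z_1$ are chosen in $H_c$ in the same way, using $\operatorname{def}(w)<|H_c|-\gamma n$ for the adjacency to $w$. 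For $Z_2$ we need $k-1$ further vertices of $W_d$, adjacent to all chosen vertices; these we take from $H_d\subseteq W_d$, using the common-neighbor estimate of Lemma~\ref{lem:heavy}(b). Independence of $Z_2$ holds because $W_d$ is independent (Lemma~\ref{lem:rough-spectral}). The ordering matters: we choose the heavy vertices first, except those of $Z_2$, and then the heavy vertices of $Z_2$ as common heavy neighbors, so that each step respects only $O(1)$ constraints.

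The main obstacle is that $w$ need not be heavy, so Lemma~\ref{lem:heavy}(b) cannot supply its common neighborhoods. The only control on $w$'s adjacency is $\operatorname{def}(w)$, and that is exactly what the failure of (b) provides. The margin $\gamma n$ is there to absorb the $O(\eta n)$ losses coming from the other chosen vertices. The convention for the empty minimum covers the case $r=2$, where there are no parts $j\ne c,d$ and only the bound involving $|H_c|$ is needed.

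For the final assertion, take $v\in P_d\subseteq H_d\cap N(u)\subseteq W_d'$. Then $\operatorname{def}(v)\le m(v)=O(\eta n)$. The right-hand side of (b) is at least $\min(n/(4r),|H_c|)-\gamma n\ge n/(5r)$, using $h_j\ge n/(4r)$ and $|H_c|\ge(1/r-O(\eta))n$. So (b) fails for $v$, and therefore (a) holds.
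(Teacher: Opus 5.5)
Your proof is correct and follows the paper's argument essentially step for step. Both assume (a) and (b) fail and use the bound on $\operatorname{def}(w)$ to get pools $N(w)\cap P_j$ and $N(w)\cap H_c$ of size more than $\gamma n$. Both then fill $Z_3,\dots,Z_r$ and extend $Z_1$ greedily before choosing $Z_2\setminus\{w\}$ from $H_d$, so that every constraint other than adjacency to $w$ comes from a heavy vertex excluding only $O(\eta n)$ candidates. Condition $(E)$ then gives the contradiction, and the final assertion follows from $\operatorname{def}(v)=O(\eta n)<n/(5r)$.

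One small citation point. When choosing $Z_2\setminus\{w\}$ you may need common neighbours of up to about $(r-1)k$ heavy vertices, several in the same part. This can exceed the literal hypothesis of Lemma~\ref{lem:heavy}(b). The justification should therefore be the per-vertex $O(\eta n)$ non-neighbour bound from Lemma~\ref{lem:heavy}(a), exactly as in your item (ii) and as the paper does.
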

	
	\begin{proof}
		Put $\zeta:=\gamma n$ and suppose that both alternatives fail; we produce a copy of $F$ in $G$, a contradiction. Since $\operatorname{def}(w)<\min_jh_j-\zeta$ and $\operatorname{def}(w)<|H_c|-\zeta$, we have
		\[
		|N(w)\cap P_j|\ge h_j-\operatorname{def}(w)>\zeta\quad(j\in[r]\setminus\{c,d\}),
		\qquad
		|N(w)\cap H_c|\ge|H_c|-\operatorname{def}(w)>\zeta.
		\]
		We assemble the configuration of condition~$(E)$ with $u_0=u$, the part $W_c$ in the role of $Z_1$, the part $W_d$ in the role of $Z_2$, and the remaining parts in the roles $Z_3,\dots,Z_r$. Choose an $s$-set $T\subseteq N(w)\cap P_c$, available as alternative (a) fails. Then select greedily, one vertex at a time: a $k$-set $Z_j\subseteq N(w)\cap P_j$ for each $j\in[r]\setminus\{c,d\}$; then $k-s$ further vertices of $N(w)\cap H_c$, extending $T$ to $Z_1$; then $k-1$ vertices of $H_d$, extending $\{w\}$ to $Z_2$. Each new vertex must avoid the at most $rk$ vertices chosen before it and be adjacent to all previously chosen vertices lying in other parts. Adjacency to $w$ is automatic, since every pool above is contained in $N(w)$ where it is required (the vertices joining $w$ in $Z_2$ need not be adjacent to $w$); all other previously chosen vertices are heavy, so by Lemma~\ref{lem:heavy}(a) they exclude only $O(\eta n)$ candidates in total, less than $\zeta-rk$ for sufficiently small fixed $\eta$ and all large $n$. As every pool exceeds $\zeta$ in size, the selection never exhausts a pool. The sets $Z_1,\dots,Z_r$ then span a complete $r$-partite subgraph with parts of size $k$, and $u$ is adjacent to the $s$ vertices of $T\subseteq Z_1$, to $w\in Z_2$, and to all of the $k$-sets placed in the roles $Z_3,\dots,Z_r$, which were drawn from the sets $P_j\subseteq N(u)$. Condition~$(E)$ gives $F\subseteq G$.
		
		For the last statement, a vertex $v\in P_d$ is heavy, so $\operatorname{def}(v)=O(\eta n)$, while the right side of (b) is at least $\min\bigl(\frac n{4r},|H_c|\bigr)-\gamma n\ge\frac n{5r}$ for sufficiently small fixed $\eta$; hence (b) fails for $v$ and (a) holds.
	\end{proof}
	
	\begin{lem}[Attachment]\label{lem:attach}
		For all sufficiently large $n$, $h_2\le s-1$ and $N(u)\cap(W_1\cup W_2)\subseteq H$. Consequently $W_i'=P_i$ consists of $h_i\le s-1$ heavy vertices for $i=1,2$, and
		\[
		d(u)\le\Bigl(1-\tfrac2r\Bigr)n+O(\eta n),\qquad x_u\le\,\mu\,(\frac{r-2}{r-1}+O(\eta)).
		\]
	\end{lem}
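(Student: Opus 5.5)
We argue from the spectral extremality of $G$, combining the Perron eigen-equation at $u$ with Lemma~\ref{lem:embed}. The first step is to show that $u$ has linearly many heavy neighbours in at least $r-2$ parts, since this is the hypothesis Lemma~\ref{lem:embed} needs. Because $u$ minimises the Perron weight, $\rho x_u=\sum_{v\in N(u)}x_v$. Comparing with the lower bound \eqref{eq:e2} and Lemma~\ref{lem:heavy}(c), $x_u$ cannot be much smaller than the weight of an optimally attached vertex. Concretely, suppose two indices $j$ have $h_j<\frac n{4r}$. We rewire $u$ so that it is adjacent to all heavy vertices of $r-2$ parts and to $s-1$ heavy vertices in each of the other two; condition $(T)$, together with the near-completeness of the heavy sets (Lemma~\ref{lem:heavy}(b)), guarantees the result is $F$-free. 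If the rewired graph is still non-$r$-partite, the Rayleigh quotient gives a strict gain, contradicting extremality. I expect this step to be routine modulo the bookkeeping of $\eta$-errors. The conclusion is $h_3,\dots,h_r\ge\frac n{4r}$.

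Next we apply Lemma~\ref{lem:embed} with $(c,d)=(1,2)$ and $(c,d)=(2,1)$. Every $v\in P_2$ has at most $s-1$ neighbours in $P_1$, and vice versa. To conclude $h_2\le s-1$, suppose instead $h_2\ge s$. If also $h_1\ge s$, then since vertices of $P_2$ miss only $O(\eta n)$ heavy vertices, some $v\in P_2$ is adjacent to $s$ vertices of $P_1$ unless $h_1=O(\eta n)$. So the remaining bad case is $h_1$ small and $h_2$ large.

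In that case we take $w\in W_1'$, which is non-empty by Lemma~\ref{lem:rough-spectral}, and apply Lemma~\ref{lem:embed} with $c=2$, $d=1$. Either $w$ has at most $s-1$ neighbours in $P_2$, or $\operatorname{def}(w)\ge \min(h_j,|H_2|)-\gamma n=\Omega(n)$. The first alternative, applied to all $w\in W_1'$, gives the throttled picture in part $1$. In the second alternative $w$ is a light vertex, so $w\in L$, and its low degree should force the attachment to be suboptimal. Here I would compare $\sum_{v\in N(u)}x_v$ with the weight achievable in the throttled configuration, using Lemma~\ref{lem:heavy}(c) and the minimality of $x_u$ again. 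The contradiction would come from a local rewiring that keeps non-$r$-partiteness via the edge $uw$.

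The same dichotomy applied to $w\in N(u)\cap(W_1\cup W_2)\setminus H$ should show that such light neighbours cannot occur. The idea is to move $u$'s neighbourhood in $W_1\cup W_2$ onto heavy vertices, which does not decrease the Rayleigh quotient, and to preserve non-$r$-partiteness by keeping one neighbour in each part. Once $h_1,h_2\le s-1$ and $N(u)\cap(W_1\cup W_2)\subseteq H$ are established, the degree bound is immediate: $d(u)\le 2(s-1)+\sum_{j\ge3}|W_j|\le(1-\frac2r)n+O(\eta n)$. For the weight bound, $\rho x_u\le \mu\,d(u)$ together with $\rho\ge(\alpha-\eta)n$ gives $x_u\le\mu\frac{(r-2)/r}{(r-1)/r}+O(\eta)\mu$.

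The main obstacle is excluding light neighbours of $u$ in $W_1\cup W_2$ while keeping $G$ non-$r$-partite after rewiring. This requires a careful Perron comparison, since those light vertices may carry weight comparable to $\mu$.
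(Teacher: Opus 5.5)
Your proposal has a genuine gap at the step from Lemma~\ref{lem:embed} to $h_2\le s-1$. Lemma~\ref{lem:embed} only says that each vertex of $P_1$, and each $w\in W_1'$ under alternative~(a), has at most $s-1$ neighbours in $P_2$; it does not bound $h_2$. Take a configuration with $h_1\ge1$, $h_2\ge s$, and the bipartite graph between $P_1$ and $P_2$ of maximum degree at most $s-1$. It is $F$-free. For $s=2$, $h_1=1$, $h_2=2$, with one missing edge between $P_1$ and $P_2$, it has exactly as many edges as the throttled graph on the same core: the surplus edge at $u$ is paid for by one crossing non-edge. So no embedding argument excludes it. Neither does changing edges at $u$ alone, because bringing $u$ within condition~$(T)$ means deleting edges at $u$ with nothing gained in return.

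Your sentence that the first alternative ``gives the throttled picture in part~$1$'' is therefore a non sequitur: this is precisely the hard case. Your reduction to ``$h_1$ small'' still contains every $1\le h_1=O(\eta n)$. The missing idea is the paper's Case~B: compare with $Y(T_1)$, which also \emph{restores} all crossing non-edges of $G-u$. The forced $h_1(h_2-s+1)$ non-edges between $P_1$ and $P_2$ are each worth $(1-O(\eta))\mu^2$, whereas each deleted edge at $u$ costs at most $x_u\mu$. The comparison is won only through the weight gap $x_u\le(\frac{r-2}{r-1}+O(\eta))\mu$ together with $h_1A\ge A+B_+$, where $A=h_2-s+1$ and $B=h_1-s+1$. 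That weight gap must be available \emph{before} the comparison. A single heavy vertex of $P_1$ already forces $h_2\le s-1+O(\eta n)$, hence $d(u)\le(1-\frac2r)n+O(\eta n)$. You obtain the gap only as a final consequence, so it is not there when you need it.

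The remaining steps are also not closed by the first-order comparisons you sketch. Replacing a light neighbour $w$ of $u$ by a heavy vertex $v$ changes the quotient by $2x_u(x_v-x_w)$. Its sign is not controlled, since a vertex of $L$ may have degree just below $(\alpha-3r\eta)n$. Also, in $W_1,W_2$ you cannot add heavy neighbours beyond $s-1$ without leaving condition~$(T)$. The paper instead pays for deleting $uw$ with the at least $(3r-4)\eta n$ heavy crossing edges at $w\in L$ that $Y(T_1)$ restores.

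More seriously, when $h_1=0$ and $h_2$ is large, even this compensated first-order kernel has only weak lower bounds. It is at least $-(s-1)\mu^2$ if $w$ has at most $s-1$ neighbours in $P_2$. Otherwise it is at least $-O(\gamma n)\mu^2$, because Lemma~\ref{lem:embed}(b) gives only $\operatorname{def}(w)\ge h_2-\gamma n$. So ``comparing $\sum_{v\in N(u)}x_v$ with the throttled weight'' cannot conclude. The paper extracts a strictly positive second-order gain $r_w^2/(2n)$ from the large residual coordinate at $w$, via the Temple-type Lemma~\ref{lem:residual}, with Observation~\ref{obs:lambda2} supplying the hypothesis $q\ge\lambda_2$. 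Your proposal contains no ingredient of this kind.

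A minor slip: your first step should assume $h_3<\frac n{4r}$, i.e.\ three small parts. Two small parts is the expected configuration and yields no gain. With that correction, the step matches the paper's Case~A.
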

	
	\begin{proof}
		The strategy is to show that whenever a conclusion of the lemma fails, one can re-attach $u$ so as to raise the spectral radius, contradicting the extremality of $G$. For a candidate throttle $T_1$ the graph $Y(T_1)$ replaces the star at $u$ by a throttled one; its Rayleigh quotient exceeds $\rho(G)$ exactly when the \emph{kernel} in \eqref{eq:kernel} below is positive, and when the kernel only barely fails to be positive we still recover a gain from its second order through Lemma~\ref{lem:residual}. We first record that junk neighbors of $u$ can only help, and then split into cases according to the numbers $h_1\le h_2$ of heavy neighbors of $u$ in the two throttled parts: Case~A rules out small full parts, Case~B treats $h_2\ge s$ with $h_1\ge1$, and Case~C the remaining possibility $h_1=0$.
		
		For a non-empty set $T_1\subseteq W_1$ with $|T_1|\le s-1$, let $Y(T_1)$ denote the complete $r$-partite graph on $W_1,\dots,W_r$ with $u$ joined to all of $W_3,\dots,W_r$, to $T_1$, and to the $s-1$ heaviest vertices of $W_2$. Every $Y(T_1)$ is non-$r$-partite and, by condition~$(T)$, $F$-free. Since $W_1,\dots,W_r$ are independent in $G-u$, passing from $G$ to $Y(T_1)$ only restores crossing non-edges of $G-u$ and re-targets the star at $u$, so
		\begin{equation}\label{eq:kernel}
			\tfrac12\bigl(\boldsymbol x^{\mathsf T}A(Y(T_1))\boldsymbol x-\rho(G)\bigr)=\Delta+x_u(\sigma_Y-\sigma_G),
		\end{equation}
		where $\Delta$ is the $\boldsymbol x$-weighted sum over the crossing non-edges of $G-u$ and $\sigma_Y,\sigma_G$ are the $\boldsymbol x$-weights of the neighborhoods of $u$ in $Y(T_1)$ and in $G$. We call the right side of \eqref{eq:kernel} the \emph{kernel}. If the kernel is positive, then $\rho(Y(T_1))\ge\boldsymbol x^{\mathsf T}A(Y(T_1))\boldsymbol x>\rho(G)$ contradicts the spectral extremality of $G$; the proof consists in exhibiting such a graph, or a residual improvement of one through Lemma~\ref{lem:residual}, whenever a conclusion of the lemma fails. Write $S_j:=\sum_{v\in W_j}x_v$, $a_i:=\sum_{v\in W_i'}x_v$, let $\tau_2$ be the sum of the $s-1$ largest weights in $W_2$, and let $J_i:=W_i'\cap L$; for a real number $t$ write $t_+:=\max\{t,0\}$. In this notation $\sigma_Y-\sigma_G=\sum_{j\ge3}(S_j-a_j)+\bigl(\sum_{v\in T_1}x_v-a_1\bigr)+(\tau_2-a_2)$, and $S_j\ge a_j$ for every $j\ge3$.
		
		\emph{Junk neighbors are self-defeating.} Let $w\in J_1\cup J_2$. As $w\in L$ lies in an independent class $W_i$, it misses at least $(n-|W_i|)-d(w)\ge(3r\eta-3\sqrt\varepsilon)n$ crossing vertices, at most $|L|\le\eta n$ of them in $L$; hence $Y(T_1)$ restores at least $(3r-4)\eta n$ edges from $w$ to heavy vertices, adding at least $x_w(1-O(\eta))\mu\,(3r-4)\eta n$ to $\Delta$, whereas dropping the edge $uw$ (when $w\notin T_1$) costs only $x_ux_w\le\mu x_w$. These restored edges are incident to $w$ and lead to heavy vertices, hence are disjoint over distinct junk vertices and from the heavy non-edges counted below. Pairing each dropped junk neighbor with its own restorations therefore contributes to the kernel at least $\sum_{w}x_w\mu\bigl((1-O(\eta))(3r-4)\eta n-1\bigr)\ge0$, strictly if any junk neighbor is dropped.
		
		If $r=2$ there are no full parts, so the quantities indexed by $j\ge3$ are vacuous: Case A below is empty, condition~$(E)$ and Lemma~\ref{lem:embed} then involve only the two throttled parts, and in Case C-ii one reads $\min(h_3,|H_2|)$ as $|H_2|$, so that $\operatorname{def}(w)\ge|H_2|-\gamma n\ge\frac n{4r}-\gamma n$ directly. With these readings the argument applies verbatim, and we present it for $r\ge3$.
		
		\emph{Case A: $h_3<\frac n{4r}$.} Take $T_1$ to be the $s-1$ heaviest vertices of $W_1$ and $Y:=Y(T_1)$. The heavy part of $a_1+a_2$ is at most $(h_1+h_2)\mu\le\frac n{2r}\mu$, the junk part is paired as above, and
		\[
		S_3-a_3\ \ge\ |H_3|(1-O(\eta))\mu-(h_3+|L|)\mu\ \ge\ \Bigl(\tfrac3{4r}-O(\eta)\Bigr)n\mu,
		\]
		so $\Delta+x_u(\sigma_Y-\sigma_G)\ge x_u\bigl(\tfrac1{4r}-O(\eta)\bigr)n\mu>0$ for sufficiently small fixed $\eta$, a contradiction. Hence $h_3\ge\frac n{4r}$, and Lemma~\ref{lem:embed} applies to the pairs $(c,d)=(2,1)$ and $(1,2)$.
		
		\emph{Case B: $h_2\ge s$ and $h_1\ge1$.} By the last statement of Lemma~\ref{lem:embed}, every $v\in P_1$ has at most $s-1$ neighbors in $P_2$, hence at least $h_2-s+1$ heavy non-neighbors there; since $\operatorname{def}(v)=O(\eta n)$, this forces $h_1\le h_2\le s-1+O(\eta n)$. Consequently $d(u)\le(n-1)-(|H_1|-h_1)-(|H_2|-h_2)\le(1-\tfrac2r)n+O(\eta n)$, and $\rho\,x_u=\sum_{v\in N(u)}x_v\le d(u)\mu$ with \eqref{eq:e2} gives $x_u\le\mu(\frac{r-2}{r-1}+O(\eta))$. Now take $Y:=Y(T_1)$ with $T_1$ the $s-1$ heaviest vertices of $W_1$. The forced non-edges just found give
		\[
		\Delta\ \ge\ (1-O(\eta))\,\mu^2\,h_1(h_2-s+1),
		\]
		while the heavy part of the loss $x_u\bigl[(a_1+a_2)-(\sum_{T_1}x+\tau_2)\bigr]$ is at most $(\frac{r-2}{r-1}+O(\eta))\mu^2\bigl[(h_1-s+1)_++(h_2-s+1)\bigr]$. Put $A:=h_2-s+1\ge1$ and $B:=h_1-s+1\le A$. Then $h_1A\ge A+B_+$: if $B\ge1$ then $h_1\ge s\ge2$ and $h_1A-A-B=A(h_1-1)-B\ge A-B\ge0$, while if $B\le0$ then $h_1\ge1$ gives $h_1A\ge A$. Matching the two coefficients, the kernel is at least
		\[
		(A+B_+)\Bigl(\tfrac1{r-1}-O(\eta)\Bigr)\mu^2\ >\ 0,
		\]
		a contradiction. No combinatorial slack remains at $h_1=h_2=s=2$, where an edge-extremal competitor---one extra edge at $u$ paid for by a compensating non-edge between $P_1$ and $P_2$---ties the count $|E(G)|$ exactly; there positivity rests solely on the weight gap $x_u\le\mu(\frac{r-2}{r-1}+O(\eta))$.
		
		\emph{Case C: $h_2\ge s$ and $h_1=0$.} By Lemma~\ref{lem:rough-spectral}, $W_1'\ne\varnothing$, and $h_1=0$ forces $W_1'\subseteq L$; fix $w\in W_1'$ and apply Lemma~\ref{lem:embed} with $(c,d)=(2,1)$; the two subcases below record whether $w$ has few or many heavy neighbors in $P_2$.
		
		\emph{Case C-i: $|N(w)\cap P_2|\le s-1$.} Take $Y_w:=Y(\{w\})$, which keeps the edge $uw$. Let $S_c$ be $P_2$ with its $s-1$ heaviest vertices removed and $S_w:=P_2\setminus N(w)$, so $|S_c|=h_2-s+1$ and, since $P_2\setminus S_w=N(w)\cap P_2$ has at most $s-1$ elements,
		\[
		\sum_{v\in S_w}x_v\ \ge\ \sum_{v\in S_c}x_v-(s-1)\mu.
		\]
		The $s-1$ heaviest vertices of $W_2$ dominate the $s-1$ heaviest vertices of $P_2$, so the heavy part of $a_2-\tau_2$ is at most $\sum_{S_c}x_v$; the junk parts of $a_1,a_2$ are paired as above. The crossing non-edges from $w$ to $S_w$ are restored in $Y_w$, and, as $w\in L$, at least $\bigl((3r-4)\eta n-h_2\bigr)_+$ further heavy edges at $w$ are restored. Hence
		\[
		\text{kernel}\ \ge\ (x_w-x_u)\sum_{v\in S_c}x_v\;-\;(s-1)\mu\,x_w\;+\;x_w\bigl((3r-4)\eta n-h_2\bigr)_+(1-O(\eta))\mu.
		\]
		If $h_2\le\frac34(3r-4)\eta n$, the last term is at least $x_w\mu\cdot\frac14(3r-4)\eta n(1-O(\eta))$, which exceeds $(s-1)\mu x_w$ for all large $n$; since $x_w\ge x_u$, the kernel is positive, a contradiction. If instead $h_2\ge\frac12(3r-4)\eta n$---the two regimes overlap on an interval of length $\frac14(3r-4)\eta n$---we have only $\text{kernel}\ge-(s-1)\mu x_w\ge-(s-1)\mu^2$, so the first-order kernel may fail to be positive; we then extract a strictly positive gain from the second-order residual of the Perron vector through Lemma~\ref{lem:residual}. If the kernel is positive we are done as before, so assume it is not, and put $q:=\boldsymbol x^{\mathsf T}A(Y_w)\boldsymbol x=\rho(G)+2\,\text{kernel}\in[\rho(G)-2(s-1)\mu^2,\ \rho(G)]$. The graph $Y_w$ is obtained from the complete multipartite graph with parts $W_1,\dots,W_r,\{u\}$ by deleting edges at $u$ only, so $\lambda_2(Y_w)\le\sqrt n$ by Observation~\ref{obs:lambda2}; on the other hand $\mu\le\lVert\boldsymbol x\rVert=1$ gives $q\ge\rho(G)-2s\ge(1-\tfrac1r)n-O(1)>\sqrt n$ by \eqref{eq:e2}, so the hypothesis $q\ge\lambda_2$ of Lemma~\ref{lem:residual} holds. No edge at $w$ is deleted in $Y_w$, so the residual $\boldsymbol r=A(Y_w)\boldsymbol x-q\boldsymbol x$ satisfies, at the coordinate $w$,
		\[
		r_w=(\rho(G)-q)x_w+\sum_{\substack{v\notin W_1\\ vw\in E(Y_w)\setminus E(G)}}x_v\ \ge\ \sum_{v\in S_w}x_v\ \ge\ (h_2-s+1)(1-O(\eta))\mu\ \ge\ \tfrac13(3r-4)\eta n\,\mu
		\]
		for all large $n$, using $q\le\rho(G)$. Lemma~\ref{lem:residual} with $\lambda_1-\lambda_N<2n$ now gives
		\[
		\rho(Y_w)\ \ge\ q+\frac{r_w^2}{2n}\ \ge\ \rho(G)-2(s-1)\mu^2+\frac{(3r-4)^2\eta^2n}{18}\,\mu^2\ >\ \rho(G)
		\]
		once $n>36(s-1)(3r-4)^{-2}\eta^{-2}$, again contradicting extremality.
		
		\emph{Case C-ii: $|N(w)\cap P_2|\ge s$.} Lemma~\ref{lem:embed}(b) gives
		\[
		\operatorname{def}(w)\ \ge\ \min\bigl(h_3,|H_2|\bigr)-\gamma n\ \ge\ \max\Bigl(h_2,\tfrac n{4r}\Bigr)-\gamma n,
		\]
		since $h_3\ge h_2$, $|H_2|\ge h_2$, $h_3\ge\frac n{4r}$ and $|H_2|\ge(\tfrac1r-2\eta)n$. Take $Y:=Y(T_1)$ with $T_1$ the $s-1$ heaviest vertices of $W_1$. All $\operatorname{def}(w)$ heavy non-edges at $w$ are restored, the drop of $uw$ costs $x_ux_w$, and the heavy cut in $W_2$ is at most $\sum_{S_c}x_v\le h_2\mu$ as in Case C-i, so, using $x_w\ge x_u$,
		\[
		\text{kernel}\ \ge\ x_u\mu\bigl[\operatorname{def}(w)(1-O(\eta))-h_2-1\bigr]\ \ge\ -O(\gamma n)\,\mu^2.
		\]
		If the kernel is positive we are done, so assume it is not and put $q:=\rho(G)+2\,\text{kernel}\ge\rho(G)-O(\gamma n)\mu^2$; for sufficiently small fixed $\eta$, as in Case C-i, $q>\sqrt n\ge\lambda_2(Y)$ and Lemma~\ref{lem:residual} applies. Here the edge $uw$ is deleted, so
		\[
		r_w\ \ge\ \operatorname{def}(w)(1-O(\eta))\mu-x_u\ \ge\ \tfrac n{5r}\,\mu,
		\]
		whence
		\[
		\rho(Y)\ \ge\ q+\frac{r_w^2}{2n}\ \ge\ \rho(G)+\mu^2n\Bigl(\frac1{50r^2}-O(\gamma)\Bigr)\ >\ \rho(G),
		\]
		a contradiction. Cases A--C prove $h_2\le s-1$.
		
		\emph{No junk neighbors remain.} Suppose $h_1\le h_2\le s-1$ but $J_1\cup J_2\ne\varnothing$. Take $Y:=Y(T_1)$ with $T_1$ the $s-1$ heaviest vertices of $W_1$. Since $h_i\le s-1$, the $s-1$ largest weights in $W_i$ dominate the weights of the $h_i$ heavy neighbors, so the heavy contribution to $\sigma_Y-\sigma_G$ is nonnegative; the kernel is then at least the junk contribution, which is strictly positive, a contradiction. Hence $W_i'=P_i$ for $i=1,2$.
		
		Finally, $d(u)\le(n-1)-\bigl(|H_1|-(s-1)\bigr)-\bigl(|H_2|-(s-1)\bigr)\le{(1-\tfrac2r)n+O(\eta n)}$, and $\rho\,x_u\le d(u)\mu$ with \eqref{eq:e2} gives $x_u\le\mu(\frac{r-2}{r-1}+O(\eta))$.
	\end{proof}
	
	The last ingredient compares the spectral radii of the explicit graphs of the shape produced by Lemma~\ref{lem:attach}. For integers $n_1,\ldots,n_r\ge1$ with $\sum_in_i=n-1$ and $1\le m_1,m_2\le s-1$, let $Y(n_1,\ldots,n_r;m_1,m_2)$ denote the graph obtained from the complete $r$-partite graph with parts $W_1,\ldots,W_r$, $|W_i|=n_i$, by adding a vertex $u$ joined to all of $W_3,\ldots,W_r$ and to $m_i$ vertices of $W_i$ for $i=1,2$; the parts $W_1,W_2$ are the \emph{throttled} parts and $W_3,\ldots,W_r$ the \emph{full} parts. Every such graph is non-$r$-partite (as $m_1,m_2\ge1$) and, when every part has size at least $k$, $F$-free by condition~$(T)$.
	
	\begin{lem}[Balancing]\label{lem:balancing}
		Let $F$ be $s$-embeddable with $\chi(F)=r+1\ge3$ and put $m:=s-1$. There exist $\delta_0=\delta_0(F)>0$ and $N=N(F)$ with the following property. Let $n\ge N$ and let $Y=Y(n_1,\ldots,n_r;m_1,m_2)$ satisfy $|n_i-n/r|\le\delta_0n$ for all $i$. If the configuration fails one of
		\begin{itemize}
			\item[(i)] $m_1=m_2=m$,
			\item[(ii)] $|n_i-n_j|\le1$ for all $i,j$,
			\item[(iii)] $n_i\le n_j$ whenever $i\in\{1,2\}$ and $j\ge3$,
		\end{itemize}
		then some graph $Y'$ of the same form on the same vertex set satisfies $\rho(Y')>\rho(Y)$. Consequently, if $Y$ maximizes the spectral radius among all such graphs, then (i)--(iii) hold and
		\[
		|E(Y)|=|E(T_{n,r})|-\Bigl\lfloor\frac nr\Bigr\rfloor+2(s-1).
		\]
	\end{lem}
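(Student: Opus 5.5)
The plan is to reduce the spectral radius of $Y=Y(n_1,\ldots,n_r;m_1,m_2)$ to the largest root of an explicit secular (characteristic) equation obtained from an equitable partition. Then each of (i)--(iii) is handled by a local move whose effect on that equation has a definite sign.

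\emph{Secular function.} Refine the parts into $u$; $A_i:=N(u)\cap W_i$ and $B_i:=W_i\setminus A_i$ for $i=1,2$; and $W_j$ for $j\ge3$. This partition is equitable. The Perron vector is constant on each cell, say $x_u$, $\alpha_i$, $\beta_i$, $\xi_j$. Eliminating variables from the eigen-equations gives, with $\lambda=\rho(Y)$, $\Sigma:=\sum_k n_k\xi_k$ (where $\xi_i$ is the mean weight on $W_i$), the relations $(\lambda+n_j)\xi_j=\Sigma+x_u$ for $j\ge3$, $(\lambda+n_i)\beta_i=\Sigma$, and $\alpha_i=\beta_i+x_u/\lambda$. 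Solving yields a secular equation
\[
1=\sum_{j\ge3}\frac{n_j}{\lambda+n_j}+\sum_{i=1,2}\frac{n_i}{\lambda+n_i}+\frac{1}{\lambda}\Bigl(\sum_{j\ge3}\frac{n_j}{\lambda+n_j}+\sum_{i=1,2}\frac{m_i}{\lambda+n_i}\Bigr)^{2}\Big/\Bigl(\ldots\Bigr),
\]
or an equivalent rational function $\Phi(\lambda;n_1,\ldots,n_r;m_1,m_2)$. Its largest root is $\rho(Y)$, and $\Phi$ is monotone in $\lambda$ beyond the root. For the comparisons I will therefore only need the sign of $\Phi_{Y'}(\rho(Y))$, not an explicit root.

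\emph{The moves.} For (i), if $m_i<m$, add one more edge from $u$ to $W_i$. This is a proper supergraph of the same form, so $\rho$ strictly increases by Perron--Frobenius and connectivity; no secular computation is needed. For (ii) and (iii), move one vertex between parts, keeping $m_1,m_2$ fixed. Since $\lambda=\Theta(n)$, expanding $\Phi$ at $\rho(Y)$ shows the following. Moving a vertex from a larger to a smaller part within the full parts, or within the throttled parts, changes $\Phi$ at first order by a term proportional to $(n_a-n_b-1)/\lambda^2$ (the Tur\'an balancing term), with the $u$-dependent corrections of order $O(1/n^3)$. This settles imbalance within each class. For (iii), compare a throttled part $W_i$ with a full part $W_j$ where $n_i=n_j+1$ (imbalance already excluded beyond 1). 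The balancing term vanishes, and the decisive term is the $u$-contribution: $u$ is fully joined to $W_j$ and only to $m$ vertices of $W_i$, so moving a vertex from $W_i$ to $W_j$ adds one neighbor of $u$. This gives a gain of order $x_u^2$, i.e.\ $\Theta(1/n)$ relative to the normalization, while the loss is at most the lower-order imbalance correction. An alternative for (iii) is the Rayleigh quotient with the Perron vector of $Y$, giving an explicit gain of $2x_u\xi$ minus a change of order $\mu^2$ with a vanishing first-order part. Where that difference cancels, I will fall back on Lemma~\ref{lem:residual} together with Observation~\ref{obs:lambda2} to extract a positive second-order gain, exactly as in Case C of Lemma~\ref{lem:attach}.

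\emph{Main obstacle.} I expect the hardest step to be the comparison in (iii) and the near-balanced cases of (ii), where first-order Rayleigh gains vanish and everything sits at the $\Theta(1/n)$ scale. There I will carry out the secular-function comparison carefully, computing $\Phi_{Y'}(\rho(Y))-\Phi_Y(\rho(Y))$ to second order in $1/n$ using $|n_i-n/r|\le\delta_0 n$ to control error terms.

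\emph{Edge count.} Once (i)--(iii) hold, the parts are balanced with sizes those of $T_{n-1,r}$, and the throttled parts are two smallest parts. The count is then $t_r(n-1)+(n-1)-\lceil (n-1)/r\rceil\cdot 2+2m$ adjusted appropriately. Using $t_r(n)-t_r(n-1)=n-\lceil n/r\rceil$, I will check this equals $t_r(n)-\lfloor n/r\rfloor+2(s-1)$ by a short computation split by residue of $n$ mod $r$.
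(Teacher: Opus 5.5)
Your architecture matches the paper's: fill the throttle by adding an edge, decide part-size moves by the sign of the target configuration's secular function at $\rho(Y)$, then count edges. However, your case analysis for (ii)--(iii) omits the one cross-type comparison that is genuinely delicate.

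Your within-class moves only show that two full parts, or the two throttled parts, differ by at most one. They say nothing about $|n_i-n_j|$ with $i\in\{1,2\}$ and $j\ge3$, so the phrase ``imbalance already excluded beyond 1'' in your treatment of (iii) is unjustified. Concretely, for $r=3$ the configuration $n_1=n_2=q$, $n_3=q+2$ is balanced within each class and satisfies (iii). It violates (ii), and your plan offers no improving move for it. The mirror case, a throttled part exceeding a full part by at least two, is also omitted, but it is easy: moving a vertex of $W_i\setminus N(u)$ into the full part both rebalances and gains the edge to $u$, and the first-order Rayleigh gain is already positive.

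The missing case cannot be absorbed into your within-class heuristic. Suppose a full part $W_A$ exceeds a throttled part $W_1$ by $D\ge2$. The rebalancing move sends a vertex of $W_A$ into $W_1\setminus N(u)$, and that vertex loses its edge to $u$. In the secular function this loss is not an $O(1/n^3)$ correction. The full-part summand $n_A/(\lambda+n_A)$ changes by about $-\lambda/(\lambda+n_A)^2=\Theta(1/n)$, not $\Theta(D/n^2)$. After multiplication by the $\Theta(n)$ factor coming from the eigen-equation at $u$, it is of the same order as the balancing term. The paper's expansion gives, with $\beta=\frac{r-1}r$,
\[
\Delta\Phi=\frac{2\beta^2}{r}\Bigl[(r-2)-(r-1)(D-1)\Bigr]+O\Bigl(\delta_0+\frac1n\Bigr).
\]
This is negative for every $D\ge2$, but at $D=2$ only because $(r-1)(D-1)$ exceeds $r-2$ by exactly one. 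Correspondingly, the first-order Rayleigh gain for this move vanishes to leading order at $D=2$. Whichever tool you use here (the secular expansion, or first-order Rayleigh supplemented by Lemma~\ref{lem:residual}), you must carry out an explicit computation showing that the balancing gain beats the lost $u$-edge. The proposal neither identifies this case nor contains that computation.

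Two smaller slips. For a vertex of $W_i\setminus N(u)$ the eigen-equations give $(\lambda+n_i)\beta_i=\Sigma-m_ix_u/\lambda$, not $\Sigma$; the clean exact form is the scalar equation \eqref{eq:t3-one-core-scalar}, which your displayed equation resembles. In the edge count the two throttled parts have sizes $\lfloor (n-1)/r\rfloor$ each, or $\lfloor (n-1)/r\rfloor$ and $\lfloor (n-1)/r\rfloor+1$ when $n-1\equiv r-1 \pmod r$, not $\lceil (n-1)/r\rceil$.
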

	
	\begin{proof}
		Any configuration violating one of (i)--(iii) admits a move---filling a throttle, or shifting one vertex between two parts---that raises the spectral radius. Moves that change a part size by one are compared by a first-order Rayleigh estimate; for the two comparisons where this estimate is inconclusive (a full part exceeding a throttled one by two, or a throttled part exceeding a full one by one) we instead compare the \emph{secular function} $\Phi$ whose largest root is $\rho(Y)$, and read off the sign of the change from its expansion.
		
		If $m_i<m$ for some $i$, join $u$ to one further vertex of $W_i$; the resulting graph is of the same form, and adding an edge to a connected graph strictly increases the spectral radius. So assume (i), write $\lambda:=\rho(Y)$, and let $\boldsymbol x$ be the Perron vector of $Y$. Since the automorphism group of $Y$ is transitive on each of the classes $W_i'=N(u)\cap W_i$, $W_i''=W_i\setminus W_i'$ $(i=1,2)$ and $W_j$ $(j\ge3)$, the vector $\boldsymbol x$ is constant on classes: write $a_i,b_i$ for its values on $W_i',W_i''$, $y_j$ for its value on $W_j$, and $x_u$ for its value at $u$. With $S_i:=\sum_{v\in W_i}x_v$ and $\Sigma:=\sum_jS_j$, summing the eigenvalue equation over $W_i$ gives
		\begin{equation}\label{eq:star}
			S_i=\frac{n_i\Sigma+d_i\,x_u}{\lambda+n_i},\qquad d_i:=|N(u)\cap W_i|\in\{m,n_i\},
		\end{equation}
		while the equations at single vertices give $\lambda y_j=\Sigma-S_j+x_u$, $\lambda a_i=\Sigma-S_i+x_u$, $\lambda b_i=\Sigma-S_i$, and $\lambda x_u=\Sigma-S_1-S_2+m(a_1+a_2)$.
		
		\emph{First-order moves.} Moving a vertex $w$ from $W_A$ to $W_B$ produces a graph $Y'$ of the same form, and by the Rayleigh principle
		\begin{equation}\label{eq:rayleigh-move}
			\rho(Y')\ \ge\ \frac{\boldsymbol x^{\mathsf T}A(Y')\boldsymbol x}{\boldsymbol x^{\mathsf T}\boldsymbol x}=\lambda+\frac{2x_w\Gamma}{\boldsymbol x^{\mathsf T}\boldsymbol x},
		\end{equation}
		where $\Gamma$ is the new minus the old $\boldsymbol x$-weight of the neighborhood of $w$. We record two cases; throughout, $w$ is taken in $W_A''$ when $A$ is throttled, and the vertex arrives in $W_B''$ when $B$ is throttled, so that (i) is preserved. First, if $A,B$ are of the same type and $n_A\ge n_B+2$, then $\Gamma=(S_A-x_w)-S_B$, and \eqref{eq:star} gives, for two full parts and for two throttled parts respectively,
		\[
		\Gamma=(\Sigma+x_u)\,\frac{\lambda(n_A-n_B-1)-n_B}{(\lambda+n_A)(\lambda+n_B)},\qquad
		\Gamma=(\Sigma\lambda-mx_u)\,\frac{\lambda(n_A-n_B-1)-n_B}{\lambda(\lambda+n_A)(\lambda+n_B)}.
		\]
		Both prefactors are positive, the second because $\lambda x_u\le\Sigma-S_{1}-S_{2}+m(a_1+a_2)\leq \Sigma$ for large $n$, whence $mx_u\le m\Sigma/\lambda<\Sigma\lambda$. For the bracket, $Y$ contains the complete $r$-partite graph on $W_1,\ldots,W_r$, so $\lambda\ge(n-1)-\max_in_i$ when $r\ge3$ and $\lambda\ge\sqrt{n_An_B}$ when $r=2$; in either case $\lambda>n_B$ for $\delta_0$ small, so $\lambda(n_A-n_B-1)-n_B\ge\lambda-n_B>0$ and $\Gamma>0$. Second, if $A$ is throttled, $B$ is full and $n_A\ge n_B+2$, then $w$ gains the edge to $u$, $\Gamma=(S_A-x_w)-S_B+x_u$, and \eqref{eq:star} gives
		\[
		\Gamma=\Sigma\,\frac{\lambda(n_A-n_B-1)-n_B}{(\lambda+n_A)(\lambda+n_B)}+x_u\Bigl(\frac{\lambda}{\lambda+n_B}+\frac{(1+1/\lambda)m}{\lambda+n_A}\Bigr)>0.
		\]
		In each case \eqref{eq:rayleigh-move} gives $\rho(Y')>\rho(Y)$. Hence, if (ii) or (iii) fails, we may assume that either some full part exceeds some throttled part by at least two, or some throttled part exceeds some full part by exactly one; every other failure is repaired by the moves above. For these two remaining comparisons the first-order estimate \eqref{eq:rayleigh-move} is insufficient: the corresponding $\Gamma$ is negative even though, as we now show, the move raises the spectral radius. We therefore compare secular functions directly. This requires $r\ge3$; for $r=2$ there are no full parts, both remaining comparisons are void, and the proof of the displacement claims is complete.
		
		\emph{The secular function.} For a configuration $\mathbf n=(n_1,\ldots,n_r)$ with full throttle and $\lambda>0$, set
		\[
		P:=\sum_{i=1}^r\frac{n_i}{\lambda+n_i},\quad
		U:=\sum_{i=1}^2\frac{n_i}{\lambda+n_i},\quad
		V:=\sum_{i=1}^2\frac{1}{\lambda+n_i},\quad
		Q:=\sum_{j\ge3}\frac{n_j}{\lambda+n_j}+mV,
		\]
		$R:=\lambda^2+mV(\lambda+m)-2m$, and
		\[
		\Phi(\lambda;\mathbf n):=(1-P)\,R-Q\,\bigl[\lambda(1-U)+m(2-U)\bigr].
		\]
		Dividing the identities above by $\Sigma$ and writing $t:=x_u/\Sigma$, the relation \eqref{eq:star} and the equation at $u$ read $1=P+tQ$ and $tR=\lambda(1-U)+m(2-U)$, so that $\Phi(\rho(Y);\mathbf n)=R\cdot(1-P-tQ)=0$: the spectral radius is a zero of $\Phi$. Conversely, let $\lambda_0>\rho(Y)$ satisfy $\Phi(\lambda_0;\mathbf n')=0$ for a configuration $\mathbf n'$ of the same form; then $R(\lambda_0)\ge\lambda_0^2-2m>0$, so setting $t_0:=[\lambda_0(1-U)+m(2-U)]/R$ and reversing the computation produces a class-constant vector, nonzero since its part sums add to $1$, satisfying every eigenvalue equation of $Y'=Y(\mathbf n';m,m)$ at $\lambda_0$; hence $\lambda_0\le\rho(Y')$. Since $\Phi(\lambda;\mathbf n')\to+\infty$ as $\lambda\to\infty$ and $\Phi$ is continuous on $[\rho(Y),\infty)$,
		\begin{equation}\label{eq:mechanism}
			\Phi\bigl(\rho(Y);\mathbf n'\bigr)<0\ \Longrightarrow\ \rho\bigl(Y(\mathbf n';m,m)\bigr)>\rho(Y).
		\end{equation}
		
		\emph{Second-order moves.} Evaluate all quantities at $\lambda=\rho(Y)$ and write $c_i:=\lambda+n_i$. Containment of the complete $r$-partite core and the maximum degree of $Y$ give $(1-\tfrac1r)n-2\delta_0n-1\le\lambda\le(1-\tfrac1r)n+2\delta_0n+1$, so that, uniformly over the window,
		\begin{equation}\label{eq:window}
			\begin{aligned}
				c_i&=n\bigl(1+O(\delta_0)\bigr),
				&\frac{n_i}{c_i}&=\frac1r\bigl(1+O(\delta_0)\bigr),\\
				Q&=\frac{r-2}{r}+O(\delta_0)+O\Bigl(\frac mn\Bigr),
				&R&=\lambda^2\Bigl(1+O\Bigl(\frac mn\Bigr)\Bigr).
			\end{aligned}
		\end{equation}
		$1-U=\tfrac{r-2}r+O(\delta_0)$, $1-P=O(\delta_0)$, and $$\lambda(1-U)+m(2-U)=\tfrac{(r-1)(r-2)}{r^2}\,n\,(1+O(\delta_0)).$$
		Here and below the implied constants depend only on $F$, and $\Phi(\rho(Y);\mathbf n)=0$.
		
		First let $A\ge3$ be full and $B\in\{1,2\}$ throttled with
		$D:=n_A-n_B\ge2$, say $B=1$, and let $\mathbf n'$ replace
		$(n_A,n_1)$ by $(n_A-1,n_1+1)$: a vertex of $W_A$ moves to $W_1''$.
		Using
		\[
		\frac{x}{\lambda+x}-\frac{x-1}{\lambda+x-1}
		=
		\frac{\lambda}{(\lambda+x-1)(\lambda+x)}
		\]
		termwise, together with
		\[
		(c_A-1)c_A-c_1(c_1+1)=(c_1+c_A)(D-1),
		\]
		we have
		\[
		\begin{aligned}
			\Delta P
			&=\lambda\,\frac{(c_1+c_A)(D-1)}
			{c_1(c_1+1)(c_A-1)c_A},
			&
			\Delta U
			&=\frac{\lambda}{c_1(c_1+1)},\\
			\Delta V
			&=-\frac{1}{c_1(c_1+1)},
			&
			\Delta Q
			&=-\frac{\lambda}{(c_A-1)c_A}+m\Delta V,
		\end{aligned}
		\]
		where $\Delta$ denotes the value at $\mathbf n'$ minus the value at
		$\mathbf n$, both at the fixed $\lambda=\rho(Y)$.
		Expanding $\Delta\Phi=\Phi(\lambda;\mathbf n')-\Phi(\lambda;\mathbf n)$ by the product rule and estimating each factor by \eqref{eq:window}, with $\beta:=\tfrac{r-1}r$,
		\[
		\Delta\Phi=\underbrace{-\Delta P\cdot R}_{-2\beta^3(D-1)(1+O(\delta_0))}\;
		\underbrace{-\;\Delta Q\cdot\bigl[\lambda(1-U)+m(2-U)\bigr]}_{\beta^2\frac{r-2}{r}(1+O(\delta_0))}\;
		\underbrace{+\;Q(\lambda+m)\,\Delta U}_{\beta^2\frac{r-2}{r}(1+O(\delta_0))}
		\;+\;O\Bigl(\delta_0+\frac mn\Bigr),
		\]
		the last term absorbing $(1-P)\Delta R$ and all products of two increments. Hence
		\[
		\Delta\Phi\ \le\ \frac{2\beta^2}{r}\Bigl[(r-2)-(r-1)(D-1)\Bigr]+C\Bigl(\delta_0+\frac1n\Bigr)\bigl(1+\beta(D-1)\bigr)
		\ \le\ -\frac{\beta^2}{r}\ <\ 0
		\]
		for every $D\ge2$, once $\delta_0$ is small and $n$ is large in terms of $F$, since $(r-2)-(r-1)(D-1)\le-1$. By \eqref{eq:mechanism}, $\rho(Y')>\rho(Y)$.
		
		Now let $B\in\{1,2\}$ be throttled and $A\ge3$ full with $n_B=n_A+1$, say $B=1$, and let $\mathbf n'$ replace $(n_1,n_A)$ by $(n_1-1,n_A+1)$: a vertex of $W_1''$ moves to $W_A$, gaining the edge to $u$. The multiset of part sizes is unchanged, so $\Delta P=0$ exactly, while $c_1=c_A+1$ gives
		\[
		\Delta U=-\frac{\lambda}{c_A(c_A+1)},\qquad
		\Delta V=\frac{1}{c_A(c_A+1)},\qquad
		\Delta Q=\frac{\lambda}{c_A(c_A+1)}+m\Delta V.
		\]
		The same expansion now yields
		\[
		\begin{aligned}
			\Delta\Phi
			&=-\Delta Q\cdot\bigl[\lambda(1-U)+m(2-U)\bigr]
			+Q(\lambda+m)\,\Delta U+O\Bigl(\delta_0+\frac mn\Bigr)\\
			&=-\frac{2\beta^2(r-2)}{r}\bigl(1+O(\delta_0)\bigr)<0.
		\end{aligned}
		\]
		for $r\ge3$, $\delta_0$ small and $n$ large, and \eqref{eq:mechanism} again gives $\rho(Y')>\rho(Y)$. This proves the displacement claims, and a maximizing configuration satisfies (i)--(iii).
		
		\emph{The edge count.} Let $q:=\lfloor(n-1)/r\rfloor$ and $\ell:=(n-1)-rq$, so that by (ii) exactly $\ell$ parts have size $q+1$ and the rest have size $q$, and the core contributes $|E(T_{n-1,r})|$ edges. By (iii) the throttled parts are as small as possible: $n_1=n_2=q$ if $\ell\le r-2$, and $\{n_1,n_2\}=\{q,q+1\}$ if $\ell=r-1$. With (i), $|E(Y)|=|E(T_{n-1,r})|+(n-1)-n_1-n_2+2(s-1)$. If $\ell\le r-2$, then $\lfloor n/r\rfloor=q$ and $|E(T_{n,r})|=|E(T_{n-1,r})|+(n-1)-q$, so $|E(Y)|=|E(T_{n,r})|-q+2(s-1)$; if $\ell=r-1$, then $\lfloor n/r\rfloor=q+1$ and again $|E(Y)|=|E(T_{n,r})|-(q+1)+2(s-1)$. In both cases $|E(Y)|=|E(T_{n,r})|-\lfloor n/r\rfloor+2(s-1)$.
	\end{proof}
	
	\begin{proof}[Proof of Theorem~\ref{main}]
		Let $G\in\operatorname{EX}_{r+1,\rho}(n,F)$; as a non-$r$-partite $F$-free graph it satisfies $|E(G)|\le\operatorname{ex}_{r+1}(n,F)$.
		
		By Lemma~\ref{lem:attach}, $N(u)\cap W_i=P_i$ consists of $h_i\le s-1$ heavy vertices for $i=1,2$, and $W_i'=P_i\ne\varnothing$ by Lemma~\ref{lem:rough-spectral}.
		
		Let $Y_G^{\circ}$ be the complete $r$-partite graph on $W_1,\dots,W_r$ with $u$ joined to all of $W_3,\dots,W_r$ and to its actual neighbors $W_1',W_2'$. Since $W_1,\dots,W_r$ are independent in $G$ and $|W_1'|,|W_2'|\le s-1$, every edge of $G$ is present in $Y_G^{\circ}$, so $G\subseteq Y_G^{\circ}$; and $Y_G^{\circ}$ is non-$r$-partite (as $u$ keeps a neighbor in each of $W_1,W_2$) and $F$-free by condition~$(T)$. Since $Y_G^{\circ}$ is connected, $\rho(G)\le\rho(Y_G^{\circ})$ with equality only if $G=Y_G^{\circ}$; extremality forces $\rho(G)\ge\rho(Y_G^{\circ})$, hence $G=Y_G^{\circ}$. Thus $G$ consists of a complete $r$-partite graph on $W_1,\dots,W_r$ together with a vertex $u$ joined to all of $W_3,\dots,W_r$ and to $|W_1'|,|W_2'|\le s-1$ vertices of $W_1,W_2$.
		
		It remains to identify $|E(G)|$ with $\operatorname{ex}_{r+1}(n,F)$. The graph $G=Y_G^{\circ}$ is now explicit: it is the graph $Y(|W_1|,\dots,|W_r|;|W_1'|,|W_2'|)$ in the notation preceding Lemma~\ref{lem:balancing}. Fix $\varepsilon$ at the outset so that $3\sqrt\varepsilon\le\delta_0(F)$; then $\bigl||W_i|-n/r\bigr|<3\sqrt\varepsilon\,n\le\delta_0n$ places $G$ in the window of Lemma~\ref{lem:balancing}. If the configuration of $G$ failed one of conditions (i)--(iii) of that lemma, some graph $Y'$ of the same form---non-$r$-partite and $F$-free---would satisfy $\rho(Y')>\rho(G)$, contradicting the extremality of $G$. Hence $G$ satisfies (i)--(iii): the parts $W_1,\dots,W_r$ are balanced to within one vertex, the throttled parts are smallest, and $|W_1'|=|W_2'|=s-1$ (the largest throttle left $F$-free by condition~$(T)$). By the edge count of Lemma~\ref{lem:balancing},
		\[
		|E(G)|=|E(T_{n,r})|-\lfloor n/r\rfloor+2(s-1),
		\]
		which equals $\operatorname{ex}_{r+1}(n,F)$ by hypothesis~\eqref{eq:e1}. Hence $G\in\operatorname{EX}_{r+1}(n,F)$, proving
		\[
		\operatorname{EX}_{r+1,\rho}(n,F)\subseteq\operatorname{EX}_{r+1}(n,F).
		\]
	\end{proof}

		\section{The endpoint case $t_3=1$}\label{sec:t3-one}
		
		The hypothesis $s\ge2$ in Definition~\ref{def:embeddable} is essential. If $t_3=1$, the formal choice $s=t_3$ would leave the exceptional vertex with no neighbor in two parts and would produce an $r$-partite graph. The correct extremal graph instead retains one neighbor in each of two parts and deletes the edge between these two neighbors. We now treat this endpoint separately.
		
		When $r=2$, the endpoint graph is $K_3$, and the corresponding conclusion follows directly from Theorem~\ref{thm:Li-Peng}. Hence throughout this section let $r\ge3$ and
		\[
		F_1:=K_{1,1,1,t_4,\ldots,t_{r+1}},
		\qquad
		1\le t_4\le\cdots\le t_{r+1},
		\]
		and put $k:=\max\{2,t_{r+1}\}$. For positive integers $n_1,\ldots,n_r$ with $\sum_i n_i=n-1$ and distinct $p,q\in[r]$, let $Y_{pq}(n_1,\ldots,n_r)$ be defined as follows. Start with the complete $r$-partite graph with parts $W_1,\ldots,W_r$, where $|W_i|=n_i$. Choose $u_p\in W_p$ and $u_q\in W_q$, delete $u_pu_q$, add a new vertex $u$, and join $u$ to
		\[
		\{u_p,u_q\}\cup\bigcup_{i\in[r]\setminus\{p,q\}}W_i.
		\]
		If the $n_i$ are the part sizes of $T_{n-1,r}$ and $p,q$ index its two smallest parts, this is precisely the graph $Y_r(n)$ defined before Theorem~\ref{thm:Li-Peng}.
		
		\begin{thm}\label{thm:t3-one}
			Let $r\ge3$ and let $F_1=K_{1,1,1,t_4,\ldots,t_{r+1}}$, where $1\le t_4\le\cdots\le t_{r+1}$. For all sufficiently large $n$,
			\[
			\mathrm{EX}_{r+1,\rho}(n,F_1)=\{Y_r(n)\}.
			\]
		\end{thm}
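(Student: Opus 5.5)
The plan is to reduce to Theorem~\ref{thm:Li-Peng} by showing that every spectral extremal graph for $F_1$ is in fact $K_{r+1}$-free. First, $Y_r(n)$ is $F_1$-free. Every copy of $F_1$ contains a $K_{r+1}$, and every $K_{r+1}$ in $Y_r(n)$ must use $u$, since $Y_r(n)-u$ is $r$-partite. The neighborhood of $u$ is $\{u_p,u_q\}\cup\bigcup_{i\ne p,q}W_i$, and $u_pu_q$ is missing, so a clique through $u$ has at most $1+(r-2)+1=r$ vertices. Hence $Y_r(n)$ is $K_{r+1}$-free, so it is $F_1$-free and non-$r$-partite. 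This gives $\rho(G)\ge\rho(Y_r(n))$ for any $G\in\mathrm{EX}_{r+1,\rho}(n,F_1)$. Since $Y_r(n)\supseteq T_{n-1,r}$, we get $\rho(G)>(1-\frac1r)(n-r-1)$. Remark~\ref{rem:perron-transfer} (with $J=F_1$, which is connected and edge-color-critical with $\chi=r+1$) then applies: $G$ is connected, the cleaning of Lemmas~\ref{L3}--\ref{Lne} holds by Remark~\ref{rem:edge-cleaning}, and a minimum Perron vertex $u$ satisfies $\chi(G-u)=r$. The argument of Lemma~\ref{lem:rough-spectral} item 1 is purely structural and carries over. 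So $G-u$ has independent colour classes $W_1,\dots,W_r$ of size $n/r+O(\sqrt\epsilon n)$ containing the heavy sets $H_i$, and $u$ has a neighbor in every $W_i$.

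Next comes the saturation reduction. I would show that $G-u$ is complete $r$-partite on $W_1,\dots,W_r$. Adding a missing crossing edge $xy$ to $G-u$ keeps the graph non-$r$-partite and strictly raises $\rho$, since $G$ is connected. So some added edge must create a copy of $F_1$, and that copy must use both the new edge and $u$. I would avoid a case analysis here and instead compare $G$ with an explicit competitor, as in Lemma~\ref{chiGu}. Form $G^*$ by completing $G-u$ to $K(W_1,\dots,W_r)$ and re-attaching $u$ in the $Y_{pq}$ pattern. For this, pick $u_p,u_q$ among its actual neighbors in two classes, delete $u_pu_q$, and join $u$ to all of the other $r-2$ classes. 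Each $Y_{pq}(n_1,\dots,n_r)$ is $K_{r+1}$-free, hence $F_1$-free, and non-$r$-partite.

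The key estimate is that $\rho(G)\le\rho(G^*)$, with equality only if $G=G^*$. For this I would use the Perron vector $\boldsymbol x$ of $G$ and argue along the lines of the kernel identity~\eqref{eq:kernel}. Restoring crossing non-edges gains weight $\Delta$, and re-targeting the star at $u$ changes $x_u(\sigma_{G^*}-\sigma_G)$. The structural input is the $F_1$-analogue of Lemma~\ref{lem:embed}, and this is the obstacle I expect to be hardest. The claim is that if $u$ had two adjacent heavy neighbors in two classes $W_c,W_d$ and had linearly many heavy neighbors in each of the other classes, then together with $t_{r+1}$-sets in the remaining heavy parts one builds $F_1=K_{1,1,1,t_4,\ldots}$. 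Hence, after removing low-degree junk, the heavy neighbors of $u$ in at least two classes are either tiny in number or pairwise non-adjacent across those classes. Each such non-adjacency is a crossing non-edge restored in $G^*$, and it pays for the dropped edges at $u$. Junk neighbors in $L$ would be handled exactly as in ``junk neighbors are self-defeating''. Boundary cases where the first-order gain vanishes would be handled by Lemma~\ref{lem:residual} and Observation~\ref{obs:lambda2}, since $G^*$ is complete multipartite minus a star at $u$ plus one deleted edge. That deleted edge $u_pu_q$ contributes an extra rank-one perturbation of norm $O(1)$, which I would absorb into the Weyl bound.

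Once $\rho(G)\le\rho(G^*)$ is established, the extremality of $G$ forces equality, and $G^*$ is $K_{r+1}$-free and non-$r$-partite. Theorem~\ref{thm:Li-Peng} then gives $\rho(G)=\rho(G^*)\le\rho(Y_r(n))\le\rho(G)$, so $G^*\cong Y_r(n)$. The equality clause in Theorem~\ref{thm:Li-Peng}, together with the strict Rayleigh comparison, gives $G=G^*$, so $\mathrm{EX}_{r+1,\rho}(n,F_1)=\{Y_r(n)\}$. A cleaner alternative, which I would try first, is to prove directly that $G$ is $K_{r+1}$-free and then apply Theorem~\ref{thm:Li-Peng} to $G$ itself. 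A $K_{r+1}$ in $G$ uses $u$ and one vertex from each $W_i$. Using the saturation (completeness of $G-u$ on the heavy part) and the large common neighbourhoods of Lemma~\ref{lem:heavy}(b), I would try to blow up the $r-2$ clique vertices lying outside two chosen classes to $t_{r+1}$-sets. This would yield $F_1$ unless those clique vertices are junk, and the junk case is again excluded by the self-defeating switch.
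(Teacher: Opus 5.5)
Your opening (admissibility of $Y_r(n)$, the transfer through Remark~\ref{rem:perron-transfer}, the classes $W_i$ with every $A_i\neq\varnothing$) and your endgame through Theorem~\ref{thm:Li-Peng} are fine. The gap is the middle step, $\rho(G)\le\rho(G^*)$: it is only asserted, and the structural claims you lean on for it are not right.

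\textbf{The saturation target is false.} In $Y_r(n)$ itself, $G-u=T_{n-1,r}-u_pu_q$, and $u_p,u_q$ have degree about $(1-\frac1r)n$. So $G-u$ is not complete, not even on the heavy part. This breaks your ``cleaner alternative''.

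\textbf{The cleaner alternative also fails for a second reason.} Blowing up clique vertices into $t_{r+1}$-sets requires $u$ to have many neighbours in those $r-2$ classes, and nothing forces that. For instance, take $T_{n-1,r}$ plus a vertex joined to one vertex of each part. It is non-$r$-partite, contains $K_{r+1}$, is $F_1$-free whenever $t_{r+1}\ge2$, and its clique vertices in the core are all heavy. Such a graph can only be excluded spectrally. (Also, $Y_r(n)\not\supseteq T_{n-1,r}$; your lower bound on $\rho(G)$ should come from the edge count, which still gives it.)

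\textbf{The main route's dichotomy does not follow.} Your $F_1$-embedding step forbids edges between the heavy neighbourhoods $P_c,P_d$ only when $u$ already has linearly many heavy neighbours in each of the other $r-2$ classes. Otherwise the ``tiny or pairwise non-adjacent'' conclusion is unsupported. In the complementary regime, where $u$ is sparse in at least three classes, you must win purely by re-attaching $u$. There $x_u$ can be as small as $\Theta(\mu/n)$. The star gain is then
\[
x_u(\sigma_{G^*}-\sigma_G)\approx\tfrac{r-2}{r-1}\,\mu\sum_i a_i,\qquad a_i:=\sum_{v\in A_i}x_v,
\]
which is of the same order $\mu^2$ as the cost $x_{u_p}x_{u_q}$ of deleting $u_pu_q$ when it is an edge of $G$. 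The sign of the kernel then depends on exact constants and on a careful choice of $p,q$, and you address neither. The same holds for the junk-only case $P_p=\varnothing$ and for the residual bookkeeping. In effect, the whole Case A--C analysis of Lemma~\ref{lem:attach} would have to be redone for $F_1$.

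\textbf{How the paper avoids this.} It runs the Wang--Chen--Zhang saturation: repeatedly delete $uv$ for a vertex $v$ of some $A_i$ with $|A_i|\ge2$ that is not complete to the other classes, and add all missing crossing edges at $v$. This is monotone in $\rho$ because $x_u$ is minimal, it preserves $F_1$-freeness, and it ends with every $C_i$ of complete type or a defective singleton. The exact scalar equation $(\lambda-D)(1-P)=B^2$ then gives
\[
\rho(Q_\kappa)\le\tfrac{r-1}{r}N+\tfrac{\kappa^2}{r(r-1)}+o(1)<\rho(Y_r(n))\quad\text{for }\kappa\le r-3.
\]
This forces $r-2$ large classes, hence $G_\ell\subseteq Y_{pq}$, and Li--Peng finishes.

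Your competitor route might be completable along the lines of Lemma~\ref{lem:attach}, but as written its crux is missing.
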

		
		\begin{lem}\label{lem:t3-one-admissible}
			If $n_p,n_q\ge2$, then $Y_{pq}(n_1,\ldots,n_r)$ is $K_{r+1}$-free, and hence $F_1$-free, and is non-$r$-partite. In particular, $Y_r(n)$ is admissible for the extremal problem in Theorem~\ref{thm:t3-one} when $n$ is sufficiently large.
		\end{lem}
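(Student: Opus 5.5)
The proof has three parts: $K_{r+1}$-freeness, the reduction to $F_1$-freeness, and non-$r$-partiteness.

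\emph{$K_{r+1}$-freeness.} Suppose $Y:=Y_{pq}(n_1,\ldots,n_r)$ contains a copy $K$ of $K_{r+1}$. Each $W_i$ is independent in $Y$ (only crossing edges and edges at $u$ exist), so $K$ meets each $W_i$ in at most one vertex. Since $|V(K)|=r+1$, the copy $K$ must contain $u$ and exactly one vertex from each $W_i$. The vertex chosen from $W_p$ is a neighbor of $u$, and $N(u)\cap W_p=\{u_p\}$, so it is $u_p$; likewise the vertex from $W_q$ is $u_q$. But $u_pu_q$ was deleted, a contradiction.

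\emph{$F_1$-freeness.} The graph $F_1=K_{1,1,1,t_4,\ldots,t_{r+1}}$ has $r+1$ parts. Picking one vertex from each part spans a $K_{r+1}$, so $K_{r+1}\subseteq F_1$. Hence any $K_{r+1}$-free graph is $F_1$-free.

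\emph{Non-$r$-partiteness.} Suppose $c$ is a proper $r$-coloring of $Y$. I use $n_p,n_q\ge2$ here. Pick $u_p'\in W_p\setminus\{u_p\}$ and $u_q'\in W_q\setminus\{u_q\}$, and pick one vertex $w_i\in W_i$ for each $i\notin\{p,q\}$.
\begin{itemize}
\item The set $\{u_p',u_q'\}\cup\{w_i\}$ spans a $K_r$ in the complete $r$-partite core, since $u_p'u_q'$ is still an edge. So these $r$ vertices get $r$ distinct colors.
\item Each $w_i$ is adjacent to every vertex outside $W_i$ except $u$, so every vertex of $W_i$ receives the color $c(w_i)$ (swap the roles of the chosen representatives). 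Thus each full part $W_i$ is monochromatic, with distinct colors, using up the $r-2$ colors $\{c(w_i)\}$.
\item The vertices $u_p$ and $u_q$ are adjacent to all the $w_i$, so they must use the two remaining colors $c(u_p'),c(u_q')$.
\item Since $u_p$ is adjacent to $u_q'$, it cannot get color $c(u_q')$, so $c(u_p)=c(u_p')$. Symmetrically $c(u_q)=c(u_q')$.
\item The vertex $u$ is adjacent to every $W_i$ with $i\notin\{p,q\}$ and to $u_p,u_q$. These neighbors use all $r$ colors, so $u$ has no available color.
\end{itemize}
This contradiction shows $\chi(Y)>r$.

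\emph{Application to $Y_r(n)$.} $Y_r(n)$ is $Y_{pq}$ with part sizes those of $T_{n-1,r}$, each at least $\lfloor (n-1)/r\rfloor\ge2$ for large $n$. So $Y_r(n)$ is admissible.

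The only step needing care is the non-$r$-partiteness, specifically using $n_p,n_q\ge2$ to force $u_p$ and $u_q$ onto the two leftover colors.
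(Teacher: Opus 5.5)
Your proposal is correct and follows the paper's proof essentially step for step: $K_{r+1}$-freeness is forced through $u_p,u_q$ and the missing edge $u_pu_q$, and for non-$r$-partiteness the $K_r$ on $u_p',u_q',w_i$ forces $c(u_p)=c(u_p')$ and $c(u_q)=c(u_q')$, after which $u$ sees all $r$ colors. The intermediate claim that each full part $W_i$ is monochromatic is harmless but unnecessary, since only the colors of the chosen representatives are used; also, $w_i$ is in fact adjacent to $u$ as well, so the ``except $u$'' in that bullet is inaccurate.
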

		
		\begin{proof}
			The graph $F_1$ contains $K_{r+1}$, obtained by choosing one vertex from each canonical part. On the other hand, $Y_{pq}(n_1,\ldots,n_r)$ is $K_{r+1}$-free. Indeed, a clique of order $r+1$ would have to contain $u$ and one vertex from every core part. Its vertices in $W_p$ and $W_q$ would then have to be $u_p$ and $u_q$, but $u_pu_q$ is absent. Thus the graph is $F_1$-free.
			
			To prove that it is non-$r$-partite, choose $a_p\in W_p\setminus\{u_p\}$, $a_q\in W_q\setminus\{u_q\}$, and $a_i\in W_i$ for every $i\notin\{p,q\}$. The vertices $a_1,\ldots,a_r$ form a copy of $K_r$ and hence use all $r$ colors in every proper $r$-coloring. The vertex $u_p$ is adjacent to every $a_i$ with $i\ne p$, so it can only receive the color of $a_p$; similarly, $u_q$ can only receive the color of $a_q$. The vertex $u$ is adjacent to $u_p,u_q$ and to $a_i$ for every $i\notin\{p,q\}$, and therefore it sees all $r$ colors. This contradiction proves that no proper $r$-coloring exists.
		\end{proof}
		
		For later reference, Lemma~\ref{lem:t3-one-admissible} and the Rayleigh quotient show that every extremal graph in Theorem~\ref{thm:t3-one} satisfies
		\begin{equation}\label{eq:t3-one-linear-lower}
			\begin{aligned}
				\rho(G)&\ge\rho(Y_r(n))\\
				&\ge\frac{2}{n}\left(|E(T_{n,r})|-\left\lfloor\frac nr\right\rfloor+1\right)
				\ge\left(1-\frac1r\right)n-\frac2r-O(n^{-1})\\
				&>\left(1-\frac1r\right)(n-r-1).
			\end{aligned}
		\end{equation}
		
		\begin{lem}[Endpoint structural reduction]\label{lem:t3-one-structure}
			Let $G\in\mathrm{EX}_{r+1,\rho}(n,F_1)$, where $n$ is sufficiently large. Then $G$ is connected and has a positive Perron vector $\boldsymbol x=(x_v)_{v\in V(G)}$. If $u$ is chosen so that $x_u=\min_vx_v$, then $G-u$ is $r$-partite. Moreover, its color classes $W_1,\ldots,W_r$ may be labeled so that
			\[
			|W_i|=\frac nr+o(n)\qquad(i\in[r]),
			\]
			and every set $A_i:=N_G(u)\cap W_i$ is non-empty. Here the $o(n)$ term is uniform over all graphs in $\mathrm{EX}_{r+1,\rho}(n,F_1)$.
		\end{lem}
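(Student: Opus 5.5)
The plan is to invoke the transfer principle of Remark~\ref{rem:perron-transfer} with $J=F_1$. First we check its hypotheses. The graph $F_1=K_{1,1,1,t_4,\ldots,t_{r+1}}$ is connected and has chromatic number $r+1$. It is edge-color-critical: deleting the edge between two of the singleton parts merges them into a single color class, leaving an $r$-colorable graph. By \eqref{eq:t3-one-linear-lower}, every $G\in\mathrm{EX}_{r+1,\rho}(n,F_1)$ satisfies $\rho(G)>(1-\frac1r)(n-r-1)$. The remark then says that $G$ is connected, and Perron--Frobenius supplies a positive Perron vector $\boldsymbol x$. It also says that any $u$ with $x_u=\min_v x_v$ satisfies $\chi(G-u)=r$, so $G-u$ is $r$-partite.

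Next come the color classes and their sizes. Since $\rho(G)\ge(1-\frac1r)n-O(1)$, Remark~\ref{rem:edge-cleaning} shows that the stability and cleaning lemmas (Lemmas~\ref{L3}--\ref{cleanindep} and Lemma~\ref{lem:heavy}) apply to $G$. So there is a max-cut partition $V_1,\ldots,V_r$ with $\bigl||V_i|-n/r\bigr|<3\sqrt\varepsilon\,n$, and the heavy sets $H_i=V_i\setminus(L\cup\{u\})$ are independent. Each vertex of $H_i$ is adjacent to all but $O(\eta n)$ vertices of every $H_j$ with $j\ne i$. We then repeat the argument of Item~1 of Lemma~\ref{lem:rough-spectral}, which uses only these facts and not $s$-embeddability. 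Take a proper $r$-coloring of $G-u$ with classes $W_1,\ldots,W_r$. Each $H_i$ must be monochromatic, since two vertices of $H_i$ in different classes would be forced into conflict by their common neighbourhood in the bulk of the other $H_j$. Distinct $H_i$ also receive distinct colors. After relabeling, $H_i\subseteq W_i$.

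The partition $\{W_1\cup\{u\},W_2,\ldots,W_r\}$ has at most $n$ internal edges. Applying the size estimate of Lemma~\ref{L3} to this near-$r$-partite partition gives $\bigl||W_i|-n/r\bigr|\le 3\sqrt\varepsilon\,n$. Alternatively, we can use $H_i\subseteq W_i$ with $|H_i|\ge|V_i|-2\eta n$ and sum over $i$. Since $\varepsilon$ may be taken arbitrarily small once $n$ is large, this yields $|W_i|=n/r+o(n)$. The bound depends only on $\varepsilon(n)$, so it is uniform over the extremal family.

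Finally, non-emptiness of each $A_i=N_G(u)\cap W_i$ follows directly. If $A_j=\varnothing$ for some $j$, we give $u$ color $j$ and obtain a proper $r$-coloring of $G$, which contradicts $\chi(G)>r$.

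The main obstacle is the verification that the cleaning and Perron-switching lemmas of Section~\ref{sec:def} hold for $F_1$ with only the weaker lower bound \eqref{eq:t3-one-linear-lower} in place of \eqref{eq:e2}. Those lemmas were stated under hypothesis \eqref{eq:e1}, which we do not have here. I expect Remarks~\ref{rem:edge-cleaning} and~\ref{rem:perron-transfer} to cover this, but the step to check carefully is in Lemma~\ref{chiGu}. There, $(\alpha-\eta)n$ is used as a lower bound for $\rho(G)$, and this must still dominate $d(s)\le(\alpha-3r\eta)n$ for $s\in L$. It does, because the gap between the two is linear in $n$ while the slack lost by using \eqref{eq:t3-one-linear-lower} is only $O(1)$.
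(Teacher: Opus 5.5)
Your proposal is correct and follows essentially the same route as the paper. Both arguments obtain connectivity and $\chi(G-u)=r$ from Remark~\ref{rem:perron-transfer} via \eqref{eq:t3-one-linear-lower}, get $A_i\ne\varnothing$ from the recoloring argument, and bound $|W_i|$ by stability plus a quadratic-form estimate; your use of Lemma~\ref{L3}'s size computation on $\{W_1\cup\{u\},W_2,\ldots,W_r\}$ is the paper's variance identity applied to $G-u$. The heavy-set step showing $H_i\subseteq W_i$ is harmless but unnecessary, since the size bound holds for every labeling, and letting $\varepsilon=\varepsilon(n)\to0$ is a direct substitute for the paper's subsequence argument for uniformity.
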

		
		\begin{proof}
			Deleting an edge between two singleton parts of $F_1$ lowers its chromatic number to $r$, so $F_1$ is edge-color-critical. Thus \eqref{eq:t3-one-linear-lower} and Remark~\ref{rem:perron-transfer} show that $G$ is connected and that a minimum-coordinate vertex $u$ satisfies $\chi(G-u)=r$. The Perron vector is positive by connectedness.
			
			Let $W_1,\ldots,W_r$ be the color classes of $G-u$. If $A_i=N_G(u)\cap W_i$ were empty, assigning to $u$ the color of $W_i$ would give an $r$-coloring of $G$, a contradiction. Thus every $A_i$ is non-empty. Finally, Theorem~\ref{dkl} and \eqref{eq:t3-one-linear-lower} show that $G$ differs from $T_{n,r}$ in $o(n^2)$ edges. Deleting $u$ changes only $O(n)$ edges, so $|E(G-u)|=|E(T_{n-1,r})|-o(n^2)$. Since $G-u$ is $r$-partite,
			\[
			|E(G-u)|\le\sum_{i<j}|W_i||W_j|,
			\]
			and the standard variance identity for the last quadratic form gives $|W_i|=n/r+o(n)$ for every $i$. This estimate is uniform: otherwise, a sequence of extremal graphs with a fixed positive normalized part-size deviation would contradict Theorem~\ref{dkl}, applied with stability parameters tending to zero, and the same variance identity.
		\end{proof}
		
		\begin{lem}[A spectral gap for sparse terminal neighborhoods]\label{lem:t3-one-sparse-gap}
			Let $N=n-1$, let $\delta_N\to0$, and let $n_1+\cdots+n_r=N$ with
			\[
			\max_{i\in[r]}\left|n_i-\frac Nr\right|\le\delta_NN.
			\]
			Let $Q_\kappa(n_1,\ldots,n_r)$ be obtained from $K_{n_1,\ldots,n_r}$ by adding a vertex $u$ which is complete to at most $\kappa$ core parts and has at most $k-1$ neighbors in every other part. If $0\le\kappa\le r-2$, then, uniformly over all such graphs,
			\[
			\rho(Q_\kappa(n_1,\ldots,n_r))
			\le \frac{r-1}{r}N+\frac{\kappa^2}{r(r-1)}+o(1).
			\]
			Moreover,
			\[
			\rho(Y_r(n))\ge\frac{r-1}{r}N+\frac{(r-2)^2}{r(r-1)}+O(N^{-1}).
			\]
			Consequently, if $\kappa\le r-3$, then
			\[
			\rho(Q_\kappa(n_1,\ldots,n_r))<\rho(Y_r(n))
			\]
			for all sufficiently large $n$.
		\end{lem}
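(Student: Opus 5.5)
The plan is to bound $\rho(Q_\kappa)$ from above by a two-level perturbation argument, and to bound $\rho(Y_r(n))$ from below by an explicit test vector.

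\emph{Upper bound.} Write $\lambda=\rho(Q_\kappa)$ and let $\boldsymbol x$ be its Perron vector. First compare $Q_\kappa$ with the supergraph $Q^+$, in which $u$ is complete to the same $\kappa$ parts and has exactly $k-1$ neighbors in each of the other $r-\kappa$ parts; monotonicity gives $\rho(Q_\kappa)\le\rho(Q^+)$. Then $Q^+$ has an equitable partition, and I will use its quotient (secular) equation exactly as in the proof of Lemma~\ref{lem:balancing}. Summing the eigen-equations over each part gives $S_i=(n_i\Sigma+d_i x_u)/(\lambda+n_i)$ with $d_i\in\{n_i,k-1\}$. The equation at $u$ gives $\lambda x_u=\sum_{\text{full }i}S_i+O(k)\max x$.

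Normalize $\Sigma=1$ and use $x_v\le(1+x_u)/\lambda$ on the core. Then $x_u=\bigl(\sum_{\text{full}}S_i+O(1/n)\bigr)/\lambda$, and $\sum_{\text{full}} S_i\approx\kappa/r$, so $x_u\approx\kappa/(r\lambda)\cdot r/(r-1)\cdot\ldots$ is of order $1/n$. Substitute into $1=\sum_i S_i$:
\[
1=\sum_i\frac{n_i}{\lambda+n_i}+x_u\sum_{\text{full}}\frac{n_i}{\lambda+n_i}+O(n^{-2}).
\]
The term $x_u\sum_{\text{full}}$ is $\Theta(n^{-1})$. Expanding $\sum_i n_i/(\lambda+n_i)$ around the balanced point $\lambda_0=\frac{r-1}{r}N$, the derivative in $\lambda$ is $-\Theta(1/n)$, and imbalance of the $n_i$ only decreases the sum to second order (by concavity of $t\mapsto t/(\lambda+t)$, Jensen's inequality). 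Hence $\lambda-\lambda_0\le(\text{correction})+o(1)$, where the correction comes from $x_u\sum_{\text{full}}$. Tracking constants, with $x_u\approx\kappa/((r-1)N)\cdot(r/(r-1))^{?}$ (to be computed), the shift comes out as $\kappa^2/(r(r-1))+o(1)$. I expect this constant bookkeeping to be the main obstacle. The cleanest route is to first solve exactly for $t=x_u/\Sigma$, using that the $\kappa$ full parts share a common value, and then solve the scalar equation
\[
1-P(\lambda)=t\,Q(\lambda)
\]
to first order in $1/n$. Uniformity over $\delta_N\to0$ follows because all error terms are $O(\delta_N+1/n)$ times bounded quantities.

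\emph{Lower bound.} The graph $Y_r(n)$ contains the graph $Q_{r-2}$ on the balanced $T_{n-1,r}$ sizes minus only the edge $u_pu_q$, plus the two edges $uu_p,uu_q$. I will apply the same computation with $\kappa=r-2$ to the balanced configuration, where it is exact up to $O(N^{-1})$; the balanced configuration gives equality in Jensen. Then I account for the edge changes via the Rayleigh quotient with the quotient Perron vector of $Q_{r-2}$. Deleting $u_pu_q$ costs $2x_{u_p}x_{u_q}=O(n^{-2})$ relative to $\|\boldsymbol x\|^2=\Theta(n^{-1})$, that is $O(n^{-1})$ in $\lambda$, and adding $uu_p,uu_q$ only helps. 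This yields
\[
\rho(Y_r(n))\ge\frac{r-1}{r}N+\frac{(r-2)^2}{r(r-1)}+O(N^{-1}).
\]

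\emph{Consequence.} For $\kappa\le r-3$ we have $\kappa^2\le(r-3)^2<(r-2)^2$. The gap $\bigl((r-2)^2-\kappa^2\bigr)/(r(r-1))$ is a positive constant and dominates the $o(1)$ and $O(N^{-1})$ error terms, which gives the strict inequality for large $n$.
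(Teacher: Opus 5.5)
Your upper bound is the paper's argument. Summing the eigen-equations over the core parts gives an exact relation for $1-P$, where $P=\sum_i n_i/(\lambda+n_i)$. This is then played against the Jensen bound $1-P\ge c_N/(N+c_N)$, with $\lambda=\frac{r-1}{r}N+c_N$.

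The paper eliminates $S$ and $x_u$ exactly and obtains $(\lambda-D)(1-P)=B^2$, with $B,D=\kappa/r+o(1)$ for $\kappa$ full parts. This settles the bookkeeping you left open: $x_u/\Sigma=B/(\lambda-D)=(\kappa+o(1))/((r-1)N)$ with no further factor. Hence $1-P=(\kappa^2+o(1))/(r(r-1)N)$ and $c_N\le\kappa^2/(r(r-1))+o(1)$. The factor $r/(r-1)$ in your tentative formula must not be there, since it would produce the wrong constant $\kappa^2/(r-1)^2$.

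The same inequality also gives the a priori bound $c_N=O(1)$ that your linear expansion tacitly uses, because $1-P=O(1/N)$. Passing to the connected supergraph $Q^+$ is a harmless substitute for the paper's device of joining an isolated $u$ to a core vertex.

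The lower bound is where you genuinely differ. You locate the root of the secular equation of an auxiliary graph, in which $u$ is complete to the $r-2$ larger parts of $T_{N,r}$ and has no other neighbors. You then pass to $Y_r(n)$ by a Rayleigh perturbation of size $O(N^{-1})$. This is valid:
\begin{itemize}
\item in the auxiliary graph $D=B$ exactly;
\item the $T_{N,r}$ part sizes make Jensen an equality up to $O(N^{-2})$;
\item so $(\lambda-B)(1-P)=B^2$ forces $c_N=(r-2)^2/(r(r-1))+O(N^{-1})$.
\end{itemize}
However, this needs two-sided control of $1-P$, which the one-sided computation of your upper bound does not provide by itself.

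The paper locates no root. It evaluates the Rayleigh quotient of $Y_r(n)$ at the vector equal to $1$ on the core and $\beta=(r-2)/(r-1)$ at $u$. This gives $\frac{r-1}{r}N+\frac{2\beta(r-2)}{r}-\frac{(r-1)\beta^2}{r}+O(N^{-1})$, and this $\beta$ maximizes the constant term at exactly $(r-2)^2/(r(r-1))$.

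Your route pins down the auxiliary spectral radius to $O(N^{-1})$, which is more than is needed. The paper's route is a one-line computation, and combined with the upper bound at $\kappa=r-2$ it still yields the two-sided asymptotic for $\rho(Y_r(n))$.
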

		
		\begin{proof}
			Let $I\subseteq[r]$ be the set of parts to which $u$ is complete and put $t:=|I|\le\kappa$. Write the core parts as $W_1,\ldots,W_r$, and set
			\[
			M_i:=N(u)\cap W_i,
			\qquad
			m_i:=|M_i|.
			\]
			Thus $m_i=n_i$ for $i\in I$ and $m_i\le k-1$ otherwise. Let
			\[
			\lambda:=\rho(Q_\kappa(n_1,\ldots,n_r)).
			\]
			If $u$ is isolated, join it to one core vertex. The resulting connected supergraph remains in the same class, because $k\ge2$, and has strictly larger spectral radius. It is therefore enough to prove the upper bound when $u$ belongs to the Perron component. We may take a Perron vector with $x_u>0$.
			
			For each $i\in[r]$, let
			\[
			C_i:=\sum_{v\in W_i}x_v,
			\qquad
			S:=\sum_{i=1}^r C_i.
			\]
			Summing the eigenvalue equations over $W_i$ gives
			\[
			(\lambda+n_i)C_i=n_iS+m_ix_u.
			\]
			Define
			\[
			P:=\sum_{i=1}^r\frac{n_i}{\lambda+n_i},
			\qquad
			B:=\sum_{i=1}^r\frac{m_i}{\lambda+n_i}.
			\]
			Summing the preceding identity over $i$ yields
			\[
			(1-P)S=Bx_u.
			\]
			
			For $v\in M_i$ we have $\lambda x_v=S-C_i+x_u$. Hence, with
			\[
			R_i:=\sum_{v\in M_i}x_v,
			\]
			the preceding formula for $C_i$ gives
			\[
			R_i=\frac{m_i}{\lambda+n_i}S
			+\frac{m_i(\lambda+n_i-m_i)}{\lambda(\lambda+n_i)}x_u.
			\]
			The eigenvalue equation at $u$ is therefore
			\[
			(\lambda-D)x_u=BS,
			\qquad
			D:=\sum_{i=1}^r
			\frac{m_i(\lambda+n_i-m_i)}{\lambda(\lambda+n_i)}.
			\]
			Eliminating $S$ and $x_u$ from the last two equations, and using $Sx_u>0$, gives the exact scalar equation
			\begin{equation}\label{eq:t3-one-core-scalar}
				(\lambda-D)(1-P)=B^2.
			\end{equation}
			
			Put $\alpha:=(r-1)/r$ and write $\lambda=\alpha N+c_N$. The maximum degree of $Q_\kappa(n_1,\ldots,n_r)$ is at most $\alpha N+o(N)+1$, so $c_N\le o(N)+1$. If $c_N\le0$, the asserted upper bound is immediate. Suppose that $c_N>0$. Since $x\mapsto x/(\lambda+x)$ is concave on $[0,\infty)$, Jensen's inequality gives
			\[
			P\le r\frac{N/r}{\lambda+N/r}
			=\frac{N}{N+c_N},
			\qquad
			1-P\ge\frac{c_N}{N+c_N}.
			\]
			
			The quantities $B$ and $D$ are bounded independently of $N$. If $c_N\to\infty$ along a subsequence, then \eqref{eq:t3-one-core-scalar} gives
			\[
			B^2=(\lambda-D)(1-P)
			\ge(\alpha N+o(N))\frac{c_N}{N+c_N}
			\longrightarrow\infty,
			\]
			contradicting $B=O(1)$. Thus $c_N=O(1)$ whenever it is positive.
			
			The balance assumption and $c_N=O(1)$ now give, uniformly in the stated range,
			\[
			B=\frac tr+o(1),
			\qquad
			D=\frac tr+o(1),
			\qquad
			1-P\ge\frac{c_N}{N}-O(N^{-2}).
			\]
			Indeed, each complete part contributes $1/r+o(1)$ to both $B$ and $D$, while each bounded $m_i$ contributes $O(N^{-1})$. Substitution into \eqref{eq:t3-one-core-scalar} yields
			\[
			\left(\frac tr+o(1)\right)^2
			=B^2\ge\alpha c_N+o(1),
			\]
			and hence
			\[
			c_N\le\frac{t^2}{r(r-1)}+o(1)
			\le\frac{\kappa^2}{r(r-1)}+o(1).
			\]
			This proves the first assertion.
			
			For the comparison graph, $Y_r(n)$ is a subgraph of the extension of $T_{N,r}$ in which the new vertex is complete to $r-2$ parts and has one neighbor in each of the remaining two. Applying the upper bound just proved with $\kappa=r-2$ gives
			\begin{equation}\label{eq:t3-one-comparison-upper}
				\rho(Y_r(n))
				\le\alpha N+\frac{(r-2)^2}{r(r-1)}+o(1).
			\end{equation}
			For the reverse inequality, use a test vector which is $1$ on the $N$ core vertices and
			\[
			\beta:=\frac{r-2}{r-1}
			\]
			on the new vertex. Since the core has one crossing edge deleted and the new vertex has $(r-2)N/r+O(1)$ neighbors,
			\[
			\begin{aligned}
				\rho(Y_r(n))
				&\ge
				\frac{2|E(T_{N,r})|-2+2\beta((r-2)N/r+O(1))}{N+\beta^2}\\
				&=\alpha N+2\beta\frac{r-2}{r}-\alpha\beta^2+O(N^{-1})\\
				&=\alpha N+\frac{(r-2)^2}{r(r-1)}+O(N^{-1}).
			\end{aligned}
			\]
			Together with \eqref{eq:t3-one-comparison-upper}, this proves the stronger asymptotic equality
			\[
			\rho(Y_r(n))
			=\alpha N+\frac{(r-2)^2}{r(r-1)}+o(1),
			\]
			and, in particular, the stated second assertion. Finally, for $\kappa\le r-3$ the two constant terms differ by at least
			\[
			\frac{(r-2)^2-(r-3)^2}{r(r-1)}
			=\frac{2r-5}{r(r-1)}>0,
			\]
			which proves the strict inequality for all sufficiently large $n$.
		\end{proof}
		
		\begin{proof}[Proof of Theorem~\ref{thm:t3-one}]
			Fix $G\in\mathrm{EX}_{r+1,\rho}(n,F_1)$. Let $u,W_1,\ldots,W_r$ be supplied by Lemma~\ref{lem:t3-one-structure}. Set $n_i:=|W_i|$, and let $\boldsymbol x$ be the Perron vector of $G$, where $x_u=\min_vx_v$.
			
			We use the saturation operation of Wang, Chen and Zhang~\cite[Procedure~3.1]{WCZ}. Set $G_0:=G$. If, for some $i$, the set $A_{i,j}:=N_{G_j}(u)\cap W_i$ has at least two vertices and contains a vertex $v_j$ which is not complete to the other core classes, let
			\[
			B_j:=\left(\bigcup_{h\ne i}W_h\right)\setminus N_{G_j}(v_j),
			\qquad
			G_{j+1}:=G_j-uv_j+\{v_jw:w\in B_j\},
			\]
			and repeat. This never empties a set $A_{i,j}$ and decreases $d_{G_j}(u)$, so it terminates. Write $G_\ell$ for the terminal graph and
			\[
			C_i:=N_{G_\ell}(u)\cap W_i.
			\]
			Every $C_i$ is non-empty and is either of \emph{complete type}, meaning that all its vertices are complete to the other core classes, or of \emph{defective type}, meaning that it is a singleton whose vertex is not complete to the other classes.
			
			As in the cited procedure, $G_j-u$ remains $r$-partite. Moreover, every $G_j$ is $F_1$-free. Indeed, a first new copy would contain $u$. The vertex $u$ cannot lie in a canonical part of size at least two, since deleting it would leave an $(r+1)$-chromatic subgraph of $G_j-u$. Thus $u$ lies in a singleton part and all other vertices of the copy belong to the current neighborhood of $u$. This is impossible: every added edge is incident with a vertex whose edge to $u$ was deleted, while the neighborhood of $u$ only decreases.
			
			The procedure does not decrease spectral radius. Indeed, at the $j$-th operation,
			\[
			\boldsymbol x^{\mathsf T}\bigl(A(G_{j+1})-A(G_j)\bigr)\boldsymbol x
			=2x_{v_j}\left(\sum_{w\in B_j}x_w-x_u\right)\ge0,
			\]
			because $B_j$ is non-empty and $x_w\ge x_u$ for every $w$. Hence $\rho(G_\ell)\ge\rho(G)$. If at least one operation is performed, the inequality is strict. Otherwise equality would make $\boldsymbol x$ a Perron vector of $G_\ell$, whereas at coordinate $u$ the sum of neighboring coordinates has strictly decreased from $G$ to $G_\ell$.
			
			By the uniformity assertion in Lemma~\ref{lem:t3-one-structure}, there is a sequence $\delta_N\to0$, independent of the choice of $G$, such that
			\[
			\max_i\left|n_i-\frac Nr\right|\le\delta_NN,
			\qquad N:=n-1.
			\]
			Call $C_i$ large if $|C_i|\ge k$. We claim that at least $r-2$ of the $C_i$ are large. Suppose instead that precisely $\kappa\le r-3$ of them are large. Construct a supergraph $Q_\kappa(n_1,\ldots,n_r)$ by completing all crossing edges of the core, joining $u$ to every vertex in each large core class, and retaining its terminal neighborhood $C_i$ in every other class. Then
			\[
			G_\ell\subseteq Q_\kappa(n_1,\ldots,n_r),
			\]
			and in each of the remaining classes the new vertex has at most $k-1$ neighbors. Lemma~\ref{lem:t3-one-sparse-gap} therefore gives
			\[
			\rho(G_\ell)\le\rho(Q_\kappa(n_1,\ldots,n_r))<\rho(Y_r(n)).
			\]
			This contradicts
			\[
			\rho(G_\ell)\ge\rho(G)\ge\rho(Y_r(n)),
			\]
			where the last inequality follows from the extremality of $G$ and Lemma~\ref{lem:t3-one-admissible}. The claim follows.
			
			Some two terminal neighborhood classes have a missing edge. Otherwise, use $r-2$ large classes for the parts of sizes $t_4,\ldots,t_{r+1}$, one vertex from each remaining class for two singleton parts, and $u$ for the third singleton. This gives a copy of $F_1$ in $G_\ell$, a contradiction.
			
			Choose $p\ne q$ such that $C_p$ and $C_q$ are not complete to one another. Neither set has complete type, so both have defective type. Consequently
			\[
			C_p=\{u_p\},
			\qquad
			C_q=\{u_q\},
			\qquad
			u_pu_q\notin E(G_\ell).
			\]
			Since $k\ge2$ and at least $r-2$ terminal classes are large, every $C_i$ with $i\notin\{p,q\}$ is large and has complete type. It follows that
			\[
			G_\ell\subseteq Z:=Y_{pq}(n_1,\ldots,n_r).
			\]
			For large $n$, Lemma~\ref{lem:t3-one-structure} gives $n_p,n_q\ge2$, so Lemma~\ref{lem:t3-one-admissible} shows that $Z$ is non-$r$-partite and $K_{r+1}$-free. Theorem~\ref{thm:Li-Peng} and the extremality of $G$ now give
			\[
			\rho(Y_r(n))\le\rho(G)\le\rho(G_\ell)\le\rho(Z)\le\rho(Y_r(n)).
			\]
			Thus equality holds throughout. No saturation operation was performed, and $G=G_\ell$ cannot be a proper spanning subgraph of the connected graph $Z$ by strict Perron--Frobenius monotonicity. Hence $G\cong Z$, while the equality statement in Theorem~\ref{thm:Li-Peng} gives $Z\cong Y_r(n)$. Therefore
			\[
			\mathrm{EX}_{r+1,\rho}(n,F_1)=\{Y_r(n)\}.
			\]
		\end{proof}

		\section{Applications of the main results}\label{sec:applications}
		
		The complete multipartite applications split naturally into two regimes. If the forbidden graph has exactly two singleton parts, then its smallest non-singleton part has size at least two, and Definition~\ref{def:embeddable} together with Theorem~\ref{main} applies. If the forbidden graph has at least three singleton parts, then the embeddability condition fails; this is the endpoint regime treated directly by Theorem~\ref{thm:t3-one}. Keeping these two mechanisms separate also clarifies how the complete graph $K_{r+1}$ and the complete split graph $B_{r,q}=K_r\join qK_1$ arise as special members of the endpoint family.
		
		\subsection{The endpoint family and its classical subfamilies}
		
		Let $r\ge3$ and
		\[
		F_1=K_{1,1,1,t_4,\ldots,t_{r+1}},
		\qquad
		1\le t_4\le\cdots\le t_{r+1}.
		\]
		Theorem~\ref{thm:t3-one} gives the exact spectral classification
		\begin{equation}\label{eq:application-endpoint}
			\Ex_{r+1,\rho}(n,F_1)=\{Y_r(n)\}
		\end{equation}
		for all sufficiently large $n$. This conclusion is obtained without assuming an exact formula for $\mathrm{ex}_{r+1}(n,F_1)$ and without invoking Definition~\ref{def:embeddable}.
		
		Two familiar color-critical graphs occur inside this family. First, if
		\[
		t_4=t_5=\cdots=t_{r+1}=1,
		\]
		then all $r+1$ canonical parts are singletons, and therefore
		\[
		K_{1,1,1,t_4,\ldots,t_{r+1}}=K_{r+1}.
		\]
		Second, if $q\ge1$ and
		\[
		t_4=t_5=\cdots=t_r=1,
		\qquad
		t_{r+1}=q,
		\]
		then the graph has $r$ singleton parts and one part of size $q$. Hence
		\begin{equation}\label{eq:endpoint-split-identity}
			K_{1,1,1,t_4,\ldots,t_r,t_{r+1}}
			=K_r\join qK_1
			=B_{r,q}.
		\end{equation}
		In particular, $B_{r,1}=K_{r+1}$.
		
		Note that neither $K_{r+1}$ nor $B_{r,q}$ is $s$-embeddable in the sense of Definition~\ref{def:embeddable} for any $s\ge2$ as  condition~$(T)$ fails for both forbidden graphs. Thus, the conclusions for $K_{r+1}$ and $B_{r,q}$ do not follow from Theorem~\ref{main}. They follow from the endpoint theorem: by \eqref{eq:application-endpoint} and the identities above, $Y_r(n)$ is the unique spectral extremal graph for both families. For $K_{r+1}$ this is the theorem of Li and Peng~\cite{LP}; for $B_{r,q}$ it agrees with the results of Wang, Chen and Zhang~\cite{WCZ} and Yu and Li~\cite{YL}. Their non-$r$-partite Tur\'an number is $|E(T_{n,r})|-\floor{n/r}+1$, so in these special endpoint cases $Y_r(n)$ is also edge extremal. For a general member of the endpoint family, Theorem~\ref{thm:t3-one} asserts the spectral classification only; no edge-extremal formula is needed.
		
		When $r=2$, the endpoint consists of the single graph $K_3$. The same conclusion follows directly from Theorem~\ref{thm:Li-Peng}.
		
		\subsection{Complete multipartite graphs with exactly two singleton parts}
		
		Now let
		\[
		F=K_{1,1,t_3,\ldots,t_{r+1}},
		\qquad
		2\le t_3\le\cdots\le t_{r+1},
		\]
		and write $t_{\min}:=t_3$. This is precisely the complete multipartite family to which the embeddability framework applies. Lemma~\ref{lem:multi-embeddable} shows that $F$ is $t_{\min}$-embeddable, while Theorem~\ref{thm:multi} gives
		\begin{equation}\label{eq:application-two-singletons-edge}
			\mathrm{ex}_{r+1}(n,F)
			=|E(T_{n,r})|-\floor{n/r}+2(t_{\min}-1)
		\end{equation}
		for all sufficiently large $n$. The same edge value was obtained independently and concurrently by Wang and Zhao~\cite{WCG}.
		
		Taking $s=t_{\min}$, equation \eqref{eq:application-two-singletons-edge} is exactly the numerical hypothesis of Theorem~\ref{main}. We therefore obtain the following consequence.
		
		\begin{cor}\label{cor:complete-multipartite-preview}
			Let $r\ge2$ and $2\le t_3\le\cdots\le t_{r+1}$. For all sufficiently large $n$,
			\[
			\Ex_{r+1,\rho}(n,K_{1,1,t_3,\ldots,t_{r+1}})
			\subseteq
			\Ex_{r+1}(n,K_{1,1,t_3,\ldots,t_{r+1}}).
			\]
		\end{cor}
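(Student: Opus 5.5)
The corollary is a direct instance of Theorem~\ref{main} with $s:=t_{\min}=t_3\ge2$, so my plan is to verify the two hypotheses of that theorem for $F=K_{1,1,t_3,\ldots,t_{r+1}}$ and then invoke it.

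\textbf{Step 1: $F$ satisfies the standing assumptions.} First I check that $F$ is edge-color-critical with $\chi(F)=r+1\ge3$. The graph $F$ is complete $(r+1)$-partite, so $\chi(F)=r+1$. Deleting the edge joining its two singleton parts merges those parts into one independent set of size two. Hence $F$ minus that edge is $r$-partite, and that edge is critical.

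\textbf{Step 2: $t_{\min}$-embeddability.} This is Lemma~\ref{lem:multi-embeddable}, stated later in the paper and assumed here. If I had to justify it directly, I would argue as follows.

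For condition $(E)$, take $k\ge t_{r+1}+1$. I map the $t_3$-part of $F$ into the $s=t_3$ neighbours of $u_0$ in $Z_1$, and one singleton to $u_0$. The other singleton goes to a neighbour $w\in Z_2$. Each remaining part $t_4,\ldots,t_{r+1}$ goes into its own $Z_i$ with $i\ge3$, all of whose vertices are neighbours of $u_0$. Since $r+1$ parts of $F$ must fill $r$ host parts plus $u_0$, this assignment uses each host part exactly once, and every required edge is present.

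For condition $(T)$, suppose a copy of $F$ exists in the throttled graph. Deleting $u_0$ leaves an $r$-partite graph, which cannot contain the $(r+1)$-chromatic $F$, so $u_0$ lies in the copy. If $u_0$ lies in a part of $F$ of size at least two, the remaining vertices of the copy still span an $(r+1)$-chromatic subgraph avoiding $u_0$, which is impossible. So $u_0$ is a singleton of $F$. Then the other $r$ parts of $F$ lie in $N(u_0)$, which is $r$-partite with parts of sizes $\le s-1,\le s-1,\ge k,\ldots$. These $r$ parts, which include the other singleton and parts of size at least $t_3$, must occupy distinct host classes, because any two of them are complete to each other. So at least one part of size $\ge t_3=s$ has to land in a throttled class of size $\le s-1$, a contradiction.

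\textbf{Step 3: the edge count.} The numerical hypothesis $\mathrm{ex}_{r+1}(n,F)=|E(T_{n,r})|-\lfloor n/r\rfloor+2(t_{\min}-1)$ is Theorem~\ref{thm:multi}, i.e.\ \eqref{eq:application-two-singletons-edge}. With $s=t_{\min}$ this is exactly the hypothesis of Theorem~\ref{main}, which then yields the inclusion for all large $n$.

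The only genuine obstacle is Step 3, the exact edge count; this is where I expect the real work to lie, and it is proved separately in Section~\ref{sec:multipartite}. Within the corollary itself nothing remains beyond matching $s=t_{\min}$. One small point is the case $r=2$: there are no full parts, and $(E)$ and $(T)$ involve only the two throttled classes, but the argument above goes through verbatim.
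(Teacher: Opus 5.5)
Your proposal is correct and follows the paper's route: the paper obtains the corollary by combining Lemma~\ref{lem:multi-embeddable} ($F$ is $t_{\min}$-embeddable, with $k=t_{r+1}$) with the edge count of Theorem~\ref{thm:multi}, and then applying Theorem~\ref{main} with $s=t_{\min}$. Your sketch of $(E)$ and $(T)$ matches the paper's verification. Taking $k\ge t_{r+1}+1$ instead of $k=t_{r+1}$ makes no difference, and in $(T)$ the fact that each remaining part of $F$ lies in a single host class follows because any proper $r$-coloring of the complete $r$-partite graph $F-u_0$ is constant on its parts.
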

		
		For $r=2$, the forbidden graph is the ordinary book $B_{2,k}=K_{1,1,k}$, where $k=t_3$. Its non-bipartite edge extremal number was determined by Miao, Liu and van Dam~\cite{MLD}, and its spectral extremal graph was determined by Liu and Miao~\cite{LM}. For $r\ge3$, Corollary~\ref{cor:complete-multipartite-preview} gives the corresponding spectral-to-edge extremal inclusion for all complete multipartite graphs with exactly two singleton parts.
		
		\subsection{Comparison of the two regimes}
		
		The conclusions above are summarized in the following table. 
		
		\begin{center}
			\small
			\setlength{\tabcolsep}{4pt}
			\renewcommand{\arraystretch}{1.25}
			\begin{tabular}{@{}>{\raggedright\arraybackslash}p{0.25\textwidth}>{\raggedright\arraybackslash}p{0.34\textwidth}>{\raggedright\arraybackslash}p{0.32\textwidth}@{}}
				\toprule
				\emph{Forbidden graph} & \emph{Edge information} & \emph{Spectral conclusion and method}\\
				\midrule
				$K_{1,1,1,t_4,\ldots,t_{r+1}}$, $r\ge3$
				& not required
				& unique $Y_r(n)$ by Theorem~\ref{thm:t3-one}\\
				$K_{r+1}$
				& $|E(T_{n,r})|-\floor{n/r}+1$
				& unique $Y_r(n)$; special case of Theorem~\ref{thm:t3-one}\\
				$B_{r,q}$, $r\ge3$
				& $|E(T_{n,r})|-\floor{n/r}+1$
				& unique $Y_r(n)$; special case of Theorem~\ref{thm:t3-one}\\
				\midrule
				$K_{1,1,t_3,\ldots,t_{r+1}}$, $t_3\ge2$
				& $|E(T_{n,r})|-\floor{n/r}+2(t_{\min}-1)$
				& $\Ex_{r+1,\rho}(n,F)\subseteq\Ex_{r+1}(n,F)$ by Theorems~\ref{main} and~\ref{thm:multi}\\
				$B_{2,k}=K_{1,1,k}$
				& $|E(T_{n,2})|-\floor{n/2}+2(k-1)$
				& the $r=2$ case of Corollary~\ref{cor:complete-multipartite-preview}\\
				\bottomrule
			\end{tabular}
		\end{center}
		
		Thus the endpoint family and the two-singleton family meet at the level of complete multipartite notation but are governed by different proofs. The graphs $K_{r+1}$ and $B_{r,q}$ belong to the endpoint family and are covered by Theorem~\ref{thm:t3-one}; the family with $t_3\ge2$ satisfies Definition~\ref{def:embeddable} and is covered by Theorem~\ref{main} after the edge calculation of Theorem~\ref{thm:multi}.

	\section{Complete multipartite graphs}\label{sec:multipartite}
	
	This section determines $\mathrm{ex}_{r+1}(n,K_{1,1,t_3,\ldots,t_{r+1}})$ exactly (Theorem~\ref{thm:multi}), so as to verify hypothesis~\eqref{eq:e1} of Theorem~\ref{main} for this family. The same value was obtained independently and concurrently by Wang and Zhao~\cite{WCG}, who further classify the edge extremal graphs; the derivation below is self-contained and proceeds by a direct majorization argument, different from theirs. Only the resulting equality of Theorem~\ref{thm:multi} feeds into the spectral statement.
	
	Throughout this section, fix integers $r\ge 2$ and $t_3,\ldots,t_{r+1}\ge 2$, and set
	\[
	F:=K_{1,1,t_3,\ldots,t_{r+1}},
	\qquad
	t_3=t_{\min}:=\min\{t_3,\ldots,t_{r+1}\}.
	\]
	Then $\chi(F)=r+1$, and $F$ is edge-color-critical: if $a$ and $b$ denote the two vertices forming the parts of size $1$, then $ab\in E(F)$ and $\chi(F-ab)=r$, since after deleting $ab$ the vertices $a$ and $b$ may receive a common color. A complete multipartite graph is edge-color-critical only if at least two of its parts are singletons; here we take exactly two, which is the reason for the assumption $t_i\ge 2$ for $i\ge 3$.
	
	The following lemma verifies that $F$ meets the structural hypothesis of Theorem~\ref{main}. Recall $t_{\min}=t_3$, as $t_3\le\cdots\le t_{r+1}$.
	
	\begin{lem}\label{lem:multi-embeddable}
		The graph $F=K_{1,1,t_3,\ldots,t_{r+1}}$ is $t_3$-embeddable in the sense of Definition~\ref{def:embeddable}, with auxiliary constant $k=t_{r+1}$.
	\end{lem}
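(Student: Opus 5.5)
We verify the two conditions of Definition~\ref{def:embeddable} with $s=t_3$ and $k=t_{r+1}$. Throughout, write the parts of $F$ as $\{a\},\{b\},T_3,\dots,T_{r+1}$ with $|T_i|=t_i$. The parts of $F$ will be matched to the sets $Z_1,\dots,Z_r$ and to $u_0$ in a way that uses only the adjacencies guaranteed by the hypotheses.

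\emph{Condition $(E)$.} Let $H$ contain the complete $r$-partite graph on $Z_1,\dots,Z_r$ with $|Z_i|\ge k=t_{r+1}$. Let $u_0$ have an $s$-set $S\subseteq N(u_0)\cap Z_1$ with $s=t_3$, a neighbor $w\in Z_2$, and be complete to $\bigcup_{i\ge3}Z_i$. We embed $F$ by the following assignment:
\begin{itemize}
\item[] $a\mapsto u_0$ and $b\mapsto w$;
\item[] $T_3\mapsto S$;
\item[] for $i=4,\dots,r+1$, $T_i$ is mapped to any $t_i$-subset of $Z_{i-1}$, which exists since $|Z_{i-1}|\ge t_{r+1}\ge t_i$.
\end{itemize}
Now check the adjacencies. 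The vertex $u_0$ is adjacent to $w$, to $S$, and to all of $Z_3\cup\dots\cup Z_r$. The images of $b$, $T_3,\dots,T_{r+1}$ lie in the distinct parts $Z_2,Z_1,Z_3,\dots,Z_r$, so all edges among them are present in the complete $r$-partite subgraph. Hence $F\subseteq H$.

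\emph{Condition $(T)$.} Let $Y$ consist of a complete $r$-partite graph $K$ with parts $Z_1,\dots,Z_r$ and a vertex $u$. Suppose $u$ is complete to all parts except two, say $Z_1,Z_2$, and has at most $t_3-1$ neighbors in each of $Z_1,Z_2$. Assume for contradiction that $F\subseteq Y$. Since $K$ is $r$-partite and $\chi(F)=r+1$, the copy of $F$ must use $u$. We then split into cases according to the role of $u$ in $F$.

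\emph{Case 1: $u$ lies in some $T_i$.} Then $F-u$ still contains $K_{1,1,t_3-1,\dots}$ (or the $r+1$ parts with one reduced), which has chromatic number $r+1$ because its parts are all nonempty (as $t_i\ge2$). But $F-u$ lies inside the $r$-partite graph $K$, a contradiction. This is the place where $t_i\ge2$ is used.

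\emph{Case 2: $u$ plays the role of $a$ (or symmetrically $b$).} Then $u$ is adjacent to every other vertex of the copy, so all of $F-a=K_{1,t_3,\dots,t_{r+1}}$ lies in $N(u)$. This graph has $r$ parts, and they must occupy the $r$ parts of $K$ bijectively. The reason is that two vertices in different parts of $F-a$ are adjacent, so they cannot share a part of $K$. Consequently some part of $F-a$ is mapped into $N(u)\cap Z_1$ and another into $N(u)\cap Z_2$. At most one of these two parts of $F-a$ is the singleton $\{b\}$, so some part $T_i$ with $t_i\ge t_3$ lands in a set of size at most $t_3-1$, a contradiction.

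The main thing to get right is the partition argument in Case 2, namely that the $r$ parts of $F-a$ are forced into distinct parts of $K$. This follows since each pair of vertices from different parts of $F-a$ is adjacent while each $Z_i$ is independent. It also requires each part of $F-a$ to be nonempty, which is immediate.
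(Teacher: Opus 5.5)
Your proof is correct and follows essentially the same route as the paper. The embedding for $(E)$ is identical, and for $(T)$ you use the same step: the $r$ classes adjacent to $u$ occupy the core parts bijectively, so a class of size at least $t_3$ is forced into a throttled set of size at most $t_3-1$. The only difference is how you rule out $u$ lying in a non-singleton class. You argue via chromatic number ($F-u$ stays $(r+1)$-chromatic inside the $r$-partite core), which is the argument the paper uses in Proposition~\ref{prop:multi-lower}. The paper's proof of this lemma instead notes that a second vertex of $u$'s class would be joined to the class occupying its own core part.
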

	
	\begin{proof}
		Write $s=t_3$ and $k=t_{r+1}$, and let $a,b$ be the two singleton parts of $F$.
		
		\emph{Condition $(E)$.} Suppose a graph $H$ contains a complete $r$-partite subgraph with parts $Z_1,\ldots,Z_r$, each of size at least $k$, together with a vertex $u_0\notin\bigcup_iZ_i$ adjacent to at least $s$ vertices of $Z_1$, to some vertex $w\in Z_2$, and to all of $\bigcup_{i\ge3}Z_i$. Map $a\mapsto u_0$ and $b\mapsto w$; these are adjacent. The $r-1$ non-singleton parts of $F$, of sizes $t_3\le t_4\le\cdots\le t_{r+1}$, must be realized as independent sets lying in the common neighborhood of $u_0$ and $w$, one in each of $Z_1,Z_3,Z_4,\ldots,Z_r$. Let $T$ be the set of neighbors of $u_0$ in $Z_1$, so $|T|\ge s=t_3$; since $w\in Z_2$ is adjacent to all of $Z_1$, every vertex of $T$ is a common neighbor of $u_0,w$, and we place the part of size $t_3$ on any $t_3$ vertices of $T$. For $3\le j\le r$, all of $Z_j$ is adjacent to both $u_0$ and $w$, and $|Z_j|\ge k\ge t_{j+1}$, so we place the part of size $t_{j+1}$ inside $Z_j$. Vertices in distinct parts lie in distinct classes $Z_i$ and are completely joined; vertices in one part lie in a common $Z_i$ and are independent. Hence $F\subseteq H$.
		
		\emph{Condition $(T)$.} Let $Y$ be obtained from a complete $r$-partite graph $K_r(W_1,\ldots,W_r)$ with every part of size at least $k$ by adding a vertex $u$ joined to all of $W_3,\ldots,W_r$ and to sets $A_1\subseteq W_1$, $A_2\subseteq W_2$ with $|A_1|,|A_2|\le s-1=t_3-1$. Suppose for contradiction that $F\subseteq Y$. As $\chi(F)=r+1$ while $Y-u=K_r(W_1,\ldots,W_r)$ is $r$-partite, the copy of $F$ uses $u$; let $P_1,\ldots,P_{r+1}$ be its color classes with $u\in P_{r+1}$. Each class $P_i$ with $i\le r$ is completely joined to $P_{r+1}\ni u$, so every vertex of $\bigcup_{i\le r}P_i$ is adjacent to $u$ and hence lies in the core $K_r(W)$. These $r$ classes are pairwise completely joined and independent, so they occupy the $r$ core parts bijectively; relabel so that $P_i\subseteq W_i$ for $i\le r$. If $P_{r+1}$ contained a core vertex $x$, say $x\in W_i$, then $x$ would be completely joined to $P_i\subseteq W_i$, forcing an edge inside $W_i$; therefore $P_{r+1}=\{u\}$, and $u$ is a singleton part of $F$. Now $P_1\subseteq W_1$ and $P_2\subseteq W_2$ are completely joined to $u$, so $P_1\subseteq A_1$ and $P_2\subseteq A_2$, whence $|P_1|,|P_2|\le t_3-1$. But the classes of $F$ other than $u$ have sizes $1,t_3,t_4,\ldots,t_{r+1}$, of which only one---the remaining singleton---is at most $t_3-1$. So $P_1$ and $P_2$ cannot both have size at most $t_3-1$, a contradiction. Hence $Y$ is $F$-free.
	\end{proof}
	
	\begin{cons}\label{cons:multi}
		Let $n$ be large, and let $W_1,\ldots,W_r$ be the parts of $T_{n,r}$, labeled so that $|W_2|=\lfloor n/r\rfloor$. Fix a vertex $u\in W_1$, a set $D_1\subseteq W_1\setminus\{u\}$ with $|D_1|=t_{\min}-1$, and a set $D_2\subseteq W_2$ with $|D_2|=t_{\min}-1$. Let $Y$ be obtained from $T_{n,r}$ by
		\begin{itemize}
			\item adding all edges between $u$ and $D_1$, and
			\item deleting all edges between $u$ and $W_2\setminus D_2$.
		\end{itemize}
	\end{cons}
	
	\begin{prop}\label{prop:multi-lower}
		For sufficiently large $n$,
		\[
		\mathrm{ex}_{r+1}(n,F)\ \ge\ |E(T_{n,r})|-\left\lfloor\frac nr\right\rfloor+2(t_{\min}-1).
		\]
	\end{prop}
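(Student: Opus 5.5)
The plan is to show that the graph $Y$ of Construction~\ref{cons:multi} is a witness. That means checking three things: its edge count is exactly $|E(T_{n,r})|-\lfloor n/r\rfloor+2(t_{\min}-1)$, it is non-$r$-partite, and it is $F$-free. Since $\mathrm{ex}_{r+1}(n,F)$ is a maximum over non-$r$-partite $F$-free graphs, these together give the bound.

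The edge count is immediate. Adding the edges from $u$ to $D_1$ contributes $t_{\min}-1$ edges, since $D_1\subseteq W_1\setminus\{u\}$ and these pairs were non-edges of $T_{n,r}$. Deleting the edges from $u$ to $W_2\setminus D_2$ removes $|W_2|-(t_{\min}-1)=\lfloor n/r\rfloor-t_{\min}+1$ edges. The net change is therefore $-\lfloor n/r\rfloor+2(t_{\min}-1)$.

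For the other two properties, I will identify $Y$ with a graph to which Lemma~\ref{lem:multi-embeddable} and Lemma~\ref{lem:t3-one-admissible}-style reasoning apply. Set $W_1^\ast:=W_1\setminus\{u\}$. Then $Y-u$ is the complete $r$-partite graph on $W_1^\ast,W_2,\ldots,W_r$, and every part has size at least $\lfloor n/r\rfloor-1\ge k=t_{r+1}$ for large $n$. In $Y$, the vertex $u$ has the following neighborhood:
\begin{itemize}
\item all of $W_3,\ldots,W_r$, untouched since $u\in W_1$;
\item exactly $D_1$ in $W_1^\ast$;
\item exactly $D_2$ in $W_2$.
\end{itemize}
Both $D_1$ and $D_2$ have size $t_{\min}-1$. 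This is exactly the throttled graph of condition $(T)$ with $s=t_{\min}$, with $W_1^\ast,W_2$ as the two throttled parts. Lemma~\ref{lem:multi-embeddable} then gives that $Y$ is $F$-free.

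For non-$r$-partiteness, I will use $t_{\min}\ge2$, so that $D_1$ and $D_2$ are nonempty. Pick $d_1\in D_1$ and $d_2\in D_2$, and pick $a_j\in W_j$ for $j\ge3$. The set $\{d_1,d_2,a_3,\ldots,a_r\}$ is a $K_r$ in $Y-u$, and $u$ is adjacent to all of it, so $Y\supseteq K_{r+1}$ and $\chi(Y)\ge r+1$.

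The only point needing care is that the parts are large enough for condition $(T)$, which holds once $n\ge r(t_{r+1}+1)$. For $r=2$ there are no full parts, and the same argument goes through with the vacuous readings: the clique is the triangle $u,d_1,d_2$.
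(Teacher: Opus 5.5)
Your proof is correct and follows the paper's route: both show that the graph $Y$ of Construction~\ref{cons:multi} is a non-$r$-partite, $F$-free witness with the stated edge count. The differences are cosmetic. You get $F$-freeness by citing condition~$(T)$ from Lemma~\ref{lem:multi-embeddable}, which is legitimate since that lemma precedes the proposition and its part-size hypothesis holds once $n\ge r(t_{r+1}+1)$; the paper instead reruns essentially the same argument inline. You also certify non-$r$-partiteness by the explicit $K_{r+1}$ on $u,d_1,d_2,a_3,\ldots,a_r$, where the paper uses a colouring argument.
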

	
	\begin{proof}
		We show that the graph $Y$ of Construction~\ref{cons:multi} is non-$r$-partite and $F$-free, and we count its edges. Write $P_1:=W_1\setminus\{u\}$ and $P_i:=W_i$ for $2\le i\le r$. Then $Y-u$ is the complete $r$-partite graph with parts $P_1,\ldots,P_r$, and $u$ is adjacent to $D_1\subseteq P_1$, to $D_2\subseteq P_2$, and to every vertex of $P_3\cup\cdots\cup P_r$. Setting $d_i:=|N_Y(u)\cap P_i|$, we have
		\[
		d_1=d_2=t_{\min}-1\ (\ge 1),\qquad d_i=|P_i|\quad(3\le i\le r).
		\]
		
		\medskip
		\noindent\textbf{Edge count.}
		Construction~\ref{cons:multi} adds $|D_1|=t_{\min}-1$ edges and deletes $|W_2|-|D_2|=\lfloor n/r\rfloor-(t_{\min}-1)$ edges. Hence
		\[
		|E(Y)|=|E(T_{n,r})|+(t_{\min}-1)-\left(\left\lfloor\frac nr\right\rfloor-(t_{\min}-1)\right)
		=|E(T_{n,r})|-\left\lfloor\frac nr\right\rfloor+2(t_{\min}-1).
		\]
		
		\medskip
		\noindent\textbf{$Y$ is non-$r$-partite.}
		Suppose $Y$ admits a proper $r$-coloring. Since $Y-u$ is a complete $r$-partite graph with all parts nonempty, each part $P_i$ is monochromatic and the $r$ parts receive distinct colors. As $d_i\ge 1$ for every $i$, the vertex $u$ has a neighbor in each part and is therefore adjacent to all $r$ colors, so it cannot be colored, a contradiction.
		
		\medskip
		\noindent\textbf{$Y$ is $F$-free.}
		Since $Y-u$ is $r$-partite and $\chi(F)=r+1$, every copy of $F$ in $Y$ contains $u$. Suppose first that $u$ plays a vertex lying in a part of $F$ of size $t_j$ with $j\ge 3$. As $t_j\ge 2$, the graph $F-u=K_{1,1,t_3,\ldots,t_j-1,\ldots,t_{r+1}}$ still has $r+1$ nonempty parts, so $\chi(F-u)=r+1$; but $F-u\subseteq Y-u$ would then be $r$-partite, a contradiction. Hence $u$ plays one of the two singleton vertices of $F$, say $a$; let $b$ be the other singleton.
		
		The vertex $b$ is a neighbor of $u$, so $b\in P_\beta$ for some $\beta\in[r]$. The remaining $t_3+\cdots+t_{r+1}$ vertices of $F$ induce $K_{t_3,\ldots,t_{r+1}}$ and are adjacent to both $u$ and $b$. Being adjacent to $b\in P_\beta$, they avoid $P_\beta$; being a complete $(r-1)$-partite graph, they occupy $r-1$ pairwise distinct parts, which must be exactly $\{P_i:i\ne\beta\}$, one class per part. The class placed in $P_i$ lies in $N_Y(u)\cap P_i$ and so has size at most $d_i$. Since $\{i:i\ne\beta\}$ omits only the single index $\beta$, it contains at least one of $1$ and $2$; for that index $d_i=t_{\min}-1$, while the class placed there has size at least $t_{\min}$, a contradiction. Hence $Y$ is $F$-free.
		
		Since $Y$ is non-$r$-partite and $F$-free, $\mathrm{ex}_{r+1}(n,F)\ge |E(Y)|$, which gives the claim.
	\end{proof}
	
	\begin{rem}\label{rem:multi}
		The constant $2(t_{\min}-1)$ depends only on the smallest part size $t_{\min}$. The assumption $t_i\ge 2$ enters twice: it gives $d_1,d_2\ge 1$, which makes $Y$ non-$r$-partite, and it forces every class of $F$ to have size at least $t_{\min}$, which is exactly what the two reduced parts $P_1$ and $P_2$ exploit. A single reduced part would not suffice, since one could then take $b$ in that part and place the classes in the remaining full parts.
	\end{rem}
	
	We now turn to the matching upper bound. The counting rests on one purely arithmetic fact about balanced partitions, which we isolate first; it is independent of $F$ and is the source of the leading term $-\lfloor n/r\rfloor$.
	
	\begin{lem}\label{lem:balance}
		Let $p_1,\ldots,p_r$ be positive integers with $\sum_{i=1}^r p_i=n-1$, and let $p_{(1)}\le\cdots\le p_{(r)}$ be their nondecreasing rearrangement. Then
		\[
		g(p):=\sum_{i<j}p_ip_j+(n-1)-p_{(1)}-p_{(2)}\ \le\ |E(T_{n,r})|-\left\lfloor\frac nr\right\rfloor,
		\]
		with equality when $(p_1,\ldots,p_r)$ is the balanced partition of $n-1$, that is, the part sizes of $T_{n-1,r}$.
	\end{lem}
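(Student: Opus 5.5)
We prove the inequality by showing that $g$ does not decrease under a balancing move, so its maximum over all compositions of $n-1$ is attained at the balanced partition. We then evaluate $g$ there; the equality statement is that evaluation. Since $g$ is symmetric in the $p_i$, we may assume $p_1\le\cdots\le p_r$, so that $g(p)=\sum_{i<j}p_ip_j+(n-1)-p_1-p_2$.

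\emph{Evaluation at the balanced partition.} Let $q:=\lfloor (n-1)/r\rfloor$ and $\ell:=(n-1)-rq$. The balanced partition has $\ell$ parts of size $q+1$ and $r-\ell$ parts of size $q$, and $\sum_{i<j}p_ip_j=|E(T_{n-1,r})|$. This is the same computation as in the edge count of Lemma~\ref{lem:balancing}. If $\ell\le r-2$, then $p_{(1)}=p_{(2)}=q$ and $\lfloor n/r\rfloor=q$. Adding a vertex to a smallest part of $T_{n-1,r}$ gives $T_{n,r}$, so $|E(T_{n,r})|=|E(T_{n-1,r})|+(n-1)-q$, and therefore $g=|E(T_{n,r})|-q-q+\ldots$; more precisely $g=|E(T_{n-1,r})|+(n-1)-2q=|E(T_{n,r})|-q=|E(T_{n,r})|-\lfloor n/r\rfloor$. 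If $\ell=r-1$, then $\{p_{(1)},p_{(2)}\}=\{q,q+1\}$ and $\lfloor n/r\rfloor=q+1$. Again $|E(T_{n,r})|=|E(T_{n-1,r})|+(n-1)-q$, so $g=|E(T_{n-1,r})|+(n-1)-2q-1=|E(T_{n,r})|-(q+1)$. In both cases equality holds.

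\emph{Balancing moves.} Take any composition with $p_A\ge p_B+2$ for some indices $A,B$, and move one unit from $p_A$ to $p_B$. The product sum $\sum_{i<j}p_ip_j$ increases by exactly $p_A-p_B-1\ge1$. The term $-(p_{(1)}+p_{(2)})$ decreases by at most $1$, because only $p_B$ grows by one and each order statistic changes by at most one. Precisely, the sum of the two smallest entries is a nondecreasing function of each coordinate, and raising one coordinate by $1$ raises it by at most $1$. Lowering $p_A$ can only decrease that sum, so it does not hurt. Hence $g$ changes by at least $(p_A-p_B-1)-1\ge0$.

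Repeated moves terminate because $\sum p_i^2$ strictly decreases. They therefore reach a composition with all parts within one of each other, which is the balanced partition up to permutation. Since $g$ is symmetric and never decreased along the way, $g(p)\le g(\text{balanced})=|E(T_{n,r})|-\lfloor n/r\rfloor$.

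\emph{Main subtlety.} The only delicate point is the bound on the change of $p_{(1)}+p_{(2)}$ in the case $p_A-p_B=2$, where the product gain is exactly $1$ and the change is exactly nonnegative. It suffices to check carefully that the two smallest entries increase in total by at most one under the move. The positivity assumption $p_i\ge1$ is not needed beyond the statement itself.
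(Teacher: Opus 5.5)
Your proof is correct and is essentially the paper's argument. In both, a balancing move raises $\sum_{i<j}p_ip_j$ by $p_A-p_B-1$ while raising $p_{(1)}+p_{(2)}$ by at most one, and the value at the part sizes of $T_{n-1,r}$ then follows from $|E(T_{n,r})|=|E(T_{n-1,r})|+(n-1)-\lfloor (n-1)/r\rfloor$. Your monotonicity justification for $p_{(1)}+p_{(2)}$ and your two-case check that the second-smallest part of $T_{n-1,r}$ equals $\lfloor n/r\rfloor$ spell out steps the paper states without proof; the stray fragment ``$-q-q+\ldots$'' should be deleted.
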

	
	\begin{proof}
		The function $g$ is maximized when the parts are as equal as possible. Indeed, if $p_{(r)}-p_{(1)}\ge 2$, move one unit from a largest part to a smallest one. Then $\sum_{i<j}p_ip_j$ increases by $p_{(r)}-p_{(1)}-1\ge 1$, whereas $p_{(1)}+p_{(2)}$ increases by at most $1$, so $g$ does not decrease. Iterating, the maximum is attained at the balanced partition $T_{n-1,r}$. Finally, adding a vertex to a smallest part of $T_{n-1,r}$ and joining it to all other parts yields $T_{n,r}$, so $\sum_{i<j}p_ip_j+(n-1)-p_{(1)}=|E(T_{n,r})|$, while the second-smallest part of $T_{n-1,r}$ equals $\lfloor n/r\rfloor$. Hence $g(T_{n-1,r})=|E(T_{n,r})|-\lfloor n/r\rfloor$, which gives the claim.
	\end{proof}
	
	The upper bound is driven by the following count, which offsets the surplus of edges at a single vertex against the missing cross-pairs of the remaining $r$-partite graph.
	
	\begin{lem}\label{lem:reduction}
		Let $G$ be non-$r$-partite and $F$-free on $n$ vertices, and let $u$ be a vertex such that $G-u$ is $r$-partite, with parts $P_1,\ldots,P_r$. Put $p_i:=|P_i|$, $d_i:=|N_G(u)\cap P_i|$, and let $M':=\sum_{i<j}p_ip_j-|E(G-u)|$ be the number of missing cross-pairs of $G-u$. If some pair of indices $a\ne b$ satisfies $d_a+d_b-M'\le 2(t_{\min}-1)$, then
		\[
		|E(G)|\le |E(T_{n,r})|-\left\lfloor\frac nr\right\rfloor+2(t_{\min}-1).
		\]
	\end{lem}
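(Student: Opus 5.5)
We decompose $|E(G)|=|E(G-u)|+d(u)$ and write $|E(G-u)|=\sum_{i<j}p_ip_j-M'$ and $d(u)=\sum_{i}d_i$. The pair $a\ne b$ supplied by the hypothesis singles out two parts. For every other index we use the trivial bound $d_i\le p_i$, and for the pair $\{a,b\}$ we use the hypothesis $d_a+d_b\le M'+2(t_{\min}-1)$. This gives
\[
|E(G)|\le \sum_{i<j}p_ip_j-M'+\sum_{i\ne a,b}p_i+M'+2(t_{\min}-1)
=\sum_{i<j}p_ip_j+(n-1)-p_a-p_b+2(t_{\min}-1).
\]
The missing cross-pairs $M'$ cancel exactly, which is what the hypothesis was designed to achieve.

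Next we pass to the two smallest parts. For distinct indices $a\ne b$ we always have $p_a+p_b\ge p_{(1)}+p_{(2)}$. Hence the right-hand side is at most
\[
\sum_{i<j}p_ip_j+(n-1)-p_{(1)}-p_{(2)}+2(t_{\min}-1)=g(p)+2(t_{\min}-1),
\]
with $g$ as in Lemma~\ref{lem:balance}.

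Finally, Lemma~\ref{lem:balance} gives $g(p)\le|E(T_{n,r})|-\lfloor n/r\rfloor$, which is the claimed bound. One small check is needed first: Lemma~\ref{lem:balance} requires every $p_i$ to be positive. This holds because $G$ is non-$r$-partite. If some $P_i$ were empty, then $G-u$ would be $(r-1)$-colorable, and giving $u$ the $r$-th color would $r$-color $G$. So each $p_i\ge1$.

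No step here is a real obstacle; the lemma is essentially bookkeeping. The only delicate points are the positivity of the $p_i$ just discussed and making sure the trivial bound $d_i\le p_i$ is used only on the indices outside $\{a,b\}$, so that $M'$ cancels. The real work is deferred to establishing the hypothesis $d_a+d_b-M'\le2(t_{\min}-1)$ for an actual extremal graph, which happens outside this lemma.
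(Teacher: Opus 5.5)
Your proposal is correct and follows the paper's proof essentially line for line. You decompose $|E(G)|=\sum_{i<j}p_ip_j-M'+\sum_i d_i$, bound $d_i\le p_i$ off $\{a,b\}$ so that $M'$ cancels against the hypothesis, pass to the two smallest parts, and invoke Lemma~\ref{lem:balance}. Your check that every $p_i\ge1$ (since $G$ is non-$r$-partite) is a harmless extra detail that the paper leaves implicit.
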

	
	\begin{proof}
		Since $\sum_i p_i=n-1$ and $d_i\le p_i$ for every $i$,
		\[
		|E(G)|=|E(G-u)|+\sum_i d_i=\sum_{i<j}p_ip_j-M'+\sum_i d_i\le\sum_{i<j}p_ip_j+(n-1)-p_a-p_b+(d_a+d_b-M'),
		\]
		and the last summand is at most $2(t_{\min}-1)$ by hypothesis. Replacing $p_a,p_b$ by the two smallest parts only increases the right-hand side, and the resulting expression is exactly $g(p)+2(t_{\min}-1)$ with $g$ as in Lemma~\ref{lem:balance}; that lemma bounds $g(p)$ by $|E(T_{n,r})|-\lfloor n/r\rfloor$. The claim follows.
	\end{proof}
	
	\begin{prop}\label{prop:multi-upper}
		For sufficiently large $n$,
		\[
		\mathrm{ex}_{r+1}(n,F)\ \le\ |E(T_{n,r})|-\left\lfloor\frac nr\right\rfloor+2(t_{\min}-1).
		\]
	\end{prop}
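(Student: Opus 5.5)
Let $G$ be a non-$r$-partite $F$-free graph on $n$ vertices with $|E(G)|=\mathrm{ex}_{r+1}(n,F)$; we must show $|E(G)|$ does not exceed the bound. By Proposition~\ref{prop:multi-lower}, $|E(G)|\ge t_r(n)-n$. So Remark~\ref{rem:edge-cleaning} applies: $\rho(G)\ge 2|E(G)|/n=(1-\tfrac1r)n-o(n)$. Hence the cleaning of Lemmas~\ref{L3}--\ref{Lne} holds for $G$. We get a max-cut partition $V_1,\dots,V_r$ with near-balanced parts, a small set $L$ of low-degree vertices, and independent clean parts $V_i\setminus L$ in which any $f$ vertices in distinct parts have more than $f$ common neighbours in each other part.

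\textbf{Reducing to one exceptional vertex.} The spectral argument of Lemma~\ref{chiGu} is not available, so we argue by edge counting. First we show $L$ is small and has a simple structure. If some $v\in L$ can be recoloured into a part without creating internal edges, it is harmless. Otherwise we re-attach $v$ to be complete to all clean vertices outside one part. By the greedy embedding argument of Lemma~\ref{chiGu}, this keeps the graph $F$-free. Since $v\in L$ has degree at most $(1-\tfrac1r-3r\eta)n$, this raises $|E(G)|$ by $\Omega(\eta n)$. If the modified graph is still non-$r$-partite, this contradicts maximality. Iterating this shows there is a vertex $u$ with $G-u$ $r$-partite; the remaining obstruction to $r$-colouring must be concentrated at one vertex. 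If this step fails, a fallback is to process the low-degree vertices one at a time in an extremal order and keep the last one as $u$. Let $P_1,\dots,P_r$ be the colour classes of $G-u$, with $p_i,d_i,M'$ as in Lemma~\ref{lem:reduction}. Every $d_i\ge1$, since otherwise $G$ would be $r$-colourable.

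\textbf{Verifying the hypothesis of Lemma~\ref{lem:reduction}.} We need indices $a\ne b$ with $d_a+d_b-M'\le 2(t_{\min}-1)$. Relabel so that $d_1\le d_2\le\dots\le d_r$ and take $(a,b)=(1,2)$. If $d_2\le t_{\min}-1$, we are done since $M'\ge0$. Otherwise $d_1,d_2\ge1$ and $d_2\ge t_{\min}$. Choose $w\in N(u)\cap P_1$. The complete-bipartite-type embedding used in condition $(E)$ of Lemma~\ref{lem:multi-embeddable} forces missing edges. Concretely, $u$ plays $a$ and $w$ plays $b$. We would like a $t_{\min}$-set of common neighbours of $u,w$ in $P_2$, and $t_{j+1}$-sets in $N(u)\cap N(w)\cap P_j$ for $j\ge3$. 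These must form a complete $(r-1)$-partite graph.

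Each vertex $y$ of $N(u)\cap P_2$ that is not adjacent to $w$ contributes a missing pair $wy$ to $M'$. More precisely, for each $w\in N(u)\cap P_1$, $F$-freeness forces the following: either $w$ misses all but at most $t_{\min}-1$ vertices of $N(u)\cap P_2$, giving at least $d_2-t_{\min}+1$ missing pairs at $w$, or some other part is deficient. Summing over $w$ gives $M'\ge d_1(d_2-t_{\min}+1)$, up to the other alternatives. The inequality $d_1A\ge A+B_+$ from Case~B of Lemma~\ref{lem:attach} then yields $d_1+d_2-M'\le 2(t_{\min}-1)$.

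\textbf{Main obstacle.} The other alternatives are where the difficulty lies. These are the case where $N(u)\cap P_j$ is small for some $j\ge3$, and the case where $w$ has few common neighbours with $u$ in the large parts. In the first case we swap roles and take $(a,b)$ to be the two parts where $u$ has fewest neighbours, which then gives more room. In the second case $w$ is a low-degree vertex, so its many missing edges to clean vertices already contribute $\Omega(n)$ to $M'$, and this dominates $d_a+d_b\le$ anything needed. Handling the mixed case requires that the cleaned pools have linear size. This is supplied by \eqref{eq:e6} and by applying the greedy selection as in Lemma~\ref{lem:embed}. With the hypothesis verified, Lemma~\ref{lem:reduction} together with Lemma~\ref{lem:balance} gives the stated bound.
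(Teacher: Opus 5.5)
Your first step works, and it is a slightly simpler alternative to the paper's descent over subsets $S\subseteq L$. State its conclusion directly rather than by ``iterating'': for any $v\in L$, the re-attached graph $G'$ contains $G-v$, is $F$-free by the replacement argument of Lemma~\ref{chiGu} via \eqref{eq:e6}, and has strictly more edges. Edge-maximality therefore forces $G-v$ to be $r$-partite for \emph{every} $v\in L$.

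The gap is in the second step, which is the real content of the proposition (the paper's Lemma~\ref{lem:compensation}). As you set it up, with no information on $d(u)$, the inequality $M'\ge d_a+d_b-2(t_{\min}-1)$ is false. Take $r=3$ and $F=K_{1,1,2,5}$, let $G-u$ be complete $3$-partite, and give $u$ exactly three neighbours in each part. Then $G$ is non-$3$-partite and $F$-free, because the class of size $5$ never fits into a common neighbourhood of $u$ and the other singleton. Yet $M'=0<6-2$. Your remedy for a small $N(u)\cap P_j$ with $j\ge3$ is to take $(a,b)$ to be the two parts with fewest neighbours, but that is vacuous: $(1,2)$ was already chosen that way. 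Extremality has to enter through $d(u)$. If $d(u)\le D^\ast=(n-1)-\lfloor\frac{n-1}{r}\rfloor-\lfloor\frac nr\rfloor+2(t_{\min}-1)$, the bound follows at once from $|E(G-u)|\le|E(T_{n-1,r})|$. Otherwise the near-balance $p_i=n/r+O(\sqrt n)$ gives $d_c\ge p_c-O(\sqrt n)-2d_b$ for $c\notin\{a,b\}$. Only with this input does your greedy argument go through in the regime $d_b=O(1)$.

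When $d_b$ is unbounded, your ``mixed case'' is not handled. The pools $N(u)\cap P_c$ are only guaranteed to have size at least $d_b$, which may be sublinear. Estimate \eqref{eq:e6} and Lemma~\ref{lem:embed} concern linear clean parts and vertices with $O(\eta n)$ non-neighbours, and such non-neighbourhoods can swallow a sublinear pool, so they do not apply. For the same reason, ``$w$ has few common neighbours with $u$'' costs only about $d_b$ missing pairs, not $\Omega(n)$.

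More importantly, an embedding at $x\in A_a$ can be blocked by missing pairs among pool vertices that are not incident with $x$. Your per-$x$ tally does not count these, and a few of them can block many $x$ at once. The target holds with the exact additive constant $2(t_{\min}-1)$, so ``up to the other alternatives'' discards precisely what must be proved.

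The paper closes this with a heavy/light split, writing $K=t_3+\cdots+t_{r+1}$. Vertices with $m(y)\ge d_b/(8K)$ are heavy, and there are at most $32K$ of them when $M'<2d_b$. Usable pools, scarcity, and the domination criterion on sorted pool sizes versus class sizes then show that each $x\in A_a$ misses at least $d_b-t_{\min}+1-h$ light vertices. Adding the heavy vertices' own deficits gives $M'\ge d_a(d_b-t_{\min}+1-h)+hd_b/(16K)$, and the argument finishes by splitting on whether $d_a\le d_b/(32K)$. Your proposal needs an accounting of this kind to be complete.
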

	
	The reduction from a general extremal graph to the situation of Lemma~\ref{lem:reduction} rests on the following compensation lemma, which supplies, at the special vertex produced by the structural step below, the relation between $d_a+d_b$ and $M'$ that Lemma~\ref{lem:reduction} requires.
	
	\begin{lem}\label{lem:compensation}
		Let $r\ge3$, and let $n$ be sufficiently large. Let $G$ be non-$r$-partite and $F$-free on $n$ vertices with $|E(G)|\ge |E(T_{n,r})|-n$, and let $u$ be a vertex such that $G-u$ is $r$-partite with parts $P_1,\ldots,P_r$; write $p_i:=|P_i|$, $A_i:=N_G(u)\cap P_i$, $d_i:=|A_i|$, and $M':=\sum_{i<j}p_ip_j-|E(G-u)|$. Suppose
		\[
		d(u)\ >\ D^\ast:=(n-1)-\Bigl\lfloor\frac{n-1}{r}\Bigr\rfloor-\Bigl\lfloor\frac nr\Bigr\rfloor+2(t_{\min}-1),
		\]
		and let $a\ne b$ be indices attaining the two smallest values among $d_1,\ldots,d_r$, with $d_a\le d_b$. Then
		\[
		M'\ \ge\ d_a+d_b-2(t_{\min}-1).
		\]
	\end{lem}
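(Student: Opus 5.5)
The plan is to extract the missing cross-pairs $M'$ from $F$-freeness, using the embedding pattern from condition $(E)$ of Lemma~\ref{lem:multi-embeddable}. In that pattern, $u$ plays one singleton of $F$, a vertex $w\in A_b$ plays the other, $t_{\min}$ vertices of $A_a$ form the smallest class, and the remaining classes sit in the sets $A_i$, $i\ne a,b$.

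First, record the structural input. Since $|E(G)|\ge|E(T_{n,r})|-n$, Remark~\ref{rem:edge-cleaning} applies, so the cleaning of Section~\ref{sec:def} holds for $G$. Concretely:
\begin{itemize}
\item each $|P_i|=n/r+o(n)$;
\item outside a set $L$ of size at most $\eta n$, every vertex misses only $O(\eta n)$ crossing vertices;
\item bounded sets of such heavy vertices in distinct parts have more than $f$ common heavy neighbours in every other part, as in Lemma~\ref{lem:heavy}(b).
\end{itemize}
Next, since $d(u)>D^\ast=(1-\tfrac2r)n+O(1)$ and $d_a,d_b$ are the two smallest values, averaging gives $d_a+d_b\le\tfrac2r\bigl(d(u)\bigr)$. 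The assumption $d(u)>D^\ast$ then forces $d_i\ge(\tfrac1r-O(\eta))n$, hence $d_i\ge\tfrac n{4r}$, for every $i\ne a,b$; this is the analogue of Case~A in Lemma~\ref{lem:attach}. Finally, non-$r$-partiteness gives $d_a\ge1$.

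The key step is a one-vertex dichotomy, proved exactly like Lemma~\ref{lem:embed} with $(c,d)=(a,b)$ and with condition $(E)$ replaced by the direct embedding of Lemma~\ref{lem:multi-embeddable}. For every $w\in A_b$, either $|N(w)\cap A_a|\le t_{\min}-1$, or $w$ misses at least $\tfrac n{5r}-\gamma n$ crossing vertices. The symmetric statement holds for $v\in A_a$ relative to $A_b$, using that $d_a\le d_b$ only in the counting. Call $w$ \emph{defective} if the second alternative holds.

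With the dichotomy in hand, the counting splits into two cases.
\begin{itemize}
\item Suppose there is at least one defective vertex. Its $\Omega(n)$ missing pairs already exceed $d_a+d_b$, provided we know $d_a+d_b\le cn$ for a small constant $c$, or else we account for defective vertices separately. Here I would use $d_a+d_b\le \tfrac2r n+o(n)$ and compare the constants, refining the threshold with $\gamma$ if needed.
\item Suppose no vertex is defective. If $d_a\le t_{\min}-1$, pick $v\in A_a$. It misses at least $d_b-(t_{\min}-1)$ vertices of $A_b$, so
\[
M'\ge d_b-(t_{\min}-1)\ge d_a+d_b-2(t_{\min}-1).
\]
If instead $d_a\ge t_{\min}$, every $v\in A_a$ misses at least $B:=d_b-t_{\min}+1$ vertices of $A_b$. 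This gives $M'\ge d_aB$, and the inequality $d_aB\ge A'+B$ with $A':=d_a-t_{\min}+1\le B$ follows as in Case~B of Lemma~\ref{lem:attach}, using $d_a\ge t_{\min}\ge 2$.
\end{itemize}

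The main obstacle is the dichotomy for vertices $w\in A_b\cap L$, or $v\in A_a\cap L$, that are not heavy. For these we cannot invoke common-neighbourhood bounds directly, and their deficiency may be only of order $\eta n$ rather than $\tfrac n{5r}$. My first attempt is to note that any $w\in L$ misses at least $(3r\eta-3\sqrt\epsilon)n$ crossing vertices. This is already far larger than $d_a+d_b$ whenever $d_a+d_b=o(n)$. The remaining regime, where both $d_a$ and $d_b$ are linear, would be handled by running the embedding only through heavy vertices, since linear $d_a$ forces heavy members of $A_a$. There I would count missing pairs between the heavy parts of $A_a$ and $A_b$, where the product bound gives $\Omega(n^2)\gg d_a+d_b$.
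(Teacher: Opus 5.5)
You have a genuine gap in the case you flag as ``at least one defective vertex'', and adjusting constants will not close it. The dichotomy you import from Lemma~\ref{lem:embed} works at the cleaning scale. A defective vertex is guaranteed only $\frac n{5r}-\gamma n$ missing pairs. Reading alternative (b) with the fact that $u$ has at least $d_j-|L|\ge d_b-\eta n$ heavy neighbours in each part $j\ne a,b$ improves this to about $d_b-O(\gamma n)$, but no further. The loss of order $\gamma n$ is intrinsic: the greedy embedding needs pools larger than $\gamma n$ to absorb the $O(\eta n)$ non-neighbours of vertices already placed. The target inequality, by contrast, is exact up to the constant $2(t_{\min}-1)$.

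Take $d_a=1$ and $d_b$ close to $p_b\approx n/r$, which is compatible with $d(u)>D^\ast$. Suppose the single vertex of $A_a$ is defective and $A_b\cap L=\emptyset$. Then nothing in your proposal yields more than $d_b-O(\gamma n)$, while you need $d_b-2t_{\min}+3$. Your suggested comparison of $\frac1{5r}$ with $\frac2r$ goes the wrong way, and shrinking $\gamma$ cannot recover a deficit of order $\gamma n$. Summing the sharper bound over $A_a$ does handle $d_a\ge2$; the obstruction is $d_a=1$.

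Your last paragraph does not cover this regime either. The split into $d_a+d_b=o(n)$ and ``both linear'' omits $d_a=O(1)$, $d_b=\Theta(n)$. There $A_a$ may be a single vertex of $L$, so there are no heavy members of $A_a$ and no product bound, and a light vertex's $3r\eta n$ missing pairs fall far short of $d_b$.

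Two lesser points:
\begin{itemize}
\item Lemma~\ref{lem:embed} bounds only the heavy count $|N(w)\cap A_a\cap H|$, not $|N(w)\cap A_a|$. So in the non-defective case you must separately charge the light vertices of $A_b$.
\item The claim $d_i\ge(\frac1r-O(\eta))n$ for $i\ne a,b$ is false in general, since all $d_i\approx(r-2)n/r^2$ is allowed. The weaker $d_i\ge\frac n{4r}$ does hold and suffices.
\end{itemize}

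The missing idea is to measure exceptional vertices on the scale of $d_b$, via the contradiction hypothesis, rather than on the scale $\eta n$ of the cleaning. The paper assumes $M'\le d_a+d_b-2t_{\min}+1<2d_b$ and splits on $d_b$.

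If $d_b\le C$, then $M'=O(1)$, so a greedy embedding avoiding all endpoints of missing pairs loses only $O(1)$ candidates. This forces $|A_b\cap N(x)|\le t_{\min}-1$ for every $x\in A_a$, whence $M'\ge d_a(d_b-t_{\min}+1)\ge d_a+d_b-2(t_{\min}-1)$.

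If $d_b>C$, let $K=t_3+\cdots+t_{r+1}$. The vertices $y$ with $m(y)\ge d_b/(8K)$ number at most $h\le 32K$, because $M'<2d_b$. Every other vertex excludes fewer than $d_b/(8K)$ candidates. So an embedding through $x\in A_a$ succeeds unless $x$ misses at least $d_b-t_{\min}+1-h$ vertices with small $m$. The exceptional vertices' own missing pairs, at least $hd_b/(16K)$ of them, repay the loss of $h$.

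Every error term is then $O(1)$ rather than $O(\eta n)$, which is what the exact bound requires; the $\eta$-cleaning is not used at all.
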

	
	\begin{proof}
		Write $t:=t_{\min}$ and $K:=t_3+\cdots+t_{r+1}$, and let $C=C(F)$ be a sufficiently large constant; $C=300K^2$ suffices. Since $G$ is non-$r$-partite, $d_i\ge1$ for every $i$. If $d_b\le t-1$, then $d_a+d_b-2(t-1)\le0\le M'$ and there is nothing to prove, so assume $d_b\ge t$. Suppose for contradiction that
		\[
		M'\ \le\ B:=d_a+d_b-2t+1\ \ (<d_a+d_b\le 2d_b).
		\]
		For a vertex $y$ of $G-u$ let $m(y)$ denote its number of missing cross-pairs, so that $\sum_y m(y)=2M'$; for $x\in A_a$ and $p\ne a$ put $Q_p(x):=A_p\cap N(x)$.
		
		We first record the embedding criterion. Since $G-u$ is $r$-partite and $\chi(F)=r+1$, every copy of $F$ in $G$ uses $u$, and $u$ plays a singleton of $F$: if $u$ occupied a class of size $t_j\ge2$, then $F-u$ would retain $r+1$ nonempty classes inside the $r$-partite graph $G-u$, which is impossible. Taking the other singleton to be a vertex $x\in A_a$, a copy of $F$ is completed by placing the $r-1$ classes of $K_{t_3,\ldots,t_{r+1}}$ into the pools $Q_p(x)$, $p\ne a$, so that the classes are pairwise completely joined. Each class then lies in a single part of $G-u$ and is automatically independent; adjacency to $u$ and to $x$ holds by the definition of the pools; and each of the $r-1$ parts $p\ne a$ hosts exactly one class, since the classes are pairwise completely joined while the parts are independent. As $G$ is $F$-free, no such placement may exist for \emph{any} $x\in A_a$. Note also that an injective assignment of the class sizes $t_3,\ldots,t_{r+1}$ to a list of $r-1$ pool capacities exists if and only if, after sorting both lists nondecreasingly, the $j$-th smallest capacity is at least the $j$-th smallest class size for every $j$.
		
		\medskip
		\emph{Case A: $d_b\le C$.}
		Then $B\le 2C$, so $M'=O(1)$. Since $|E(G)|\ge|E(T_{n,r})|-n$ while $|E(G)|\le\sum_{i<j}p_ip_j+(n-1)$, the form $\sum_{i<j}p_ip_j$ is within $O(n)$ of its maximum over partitions of $n-1$, which forces $p_i=n/r+O(\sqrt n)$ for every $i$. Moreover
		\[
		\sum_i(p_i-d_i)=(n-1)-d(u)\ <\ \Bigl\lfloor\frac{n-1}{r}\Bigr\rfloor+\Bigl\lfloor\frac nr\Bigr\rfloor-2(t-1),
		\]
		while $(p_a-d_a)+(p_b-d_b)\ge p_a+p_b-2C$; subtracting, every part $c\notin\{a,b\}$ satisfies $p_c-d_c=O(\sqrt n)$ and hence $d_c\ge n/r-O(\sqrt n)$.
		
		We claim $|Q_b(x)|\le t-1$ for every $x\in A_a$. Suppose not, and pick $S\subseteq Q_b(x)$ with $|S|=t$. Assign the class of size $t$ to part $b$ via $S$, and the classes of sizes $t_4,\ldots,t_{r+1}$ to the $r-2$ parts $c\notin\{a,b\}$ in any order; in each such part choose the class greedily inside $Q_c(x)$, avoiding the endpoints of missing cross-pairs of the vertices already placed. At most $2M'=O(1)$ vertices are avoided in total, while $|Q_c(x)|\ge d_c-m(x)\ge n/r-O(\sqrt n)$, so the greedy choice succeeds and the resulting classes are pairwise completely joined. Then $F\subseteq G$, a contradiction, and the claim holds. Consequently every $x\in A_a$ misses at least $d_b-t+1$ vertices of $A_b$; these missing pairs are incident with $x$ and therefore distinct for distinct $x$, whence
		\[
		M'\ \ge\ d_a(d_b-t+1)=(d_a+d_b-2t+2)+(d_a-1)(d_b-t)+(t-2)\ \ge\ d_a+d_b-2(t-1),
		\]
		using $d_a\ge1$, $d_b\ge t$ and $t\ge2$. This contradicts $M'\le B$ and proves the lemma in Case~A.
		
		\medskip
		\emph{Case B: $d_b>C$.}
		By the choice of $a$ and $b$, every part $p\ne a$ has $d_p\ge d_b>C$. Call a vertex $y$ of $\bigcup_{p\ne a}A_p$ \emph{heavy} if $m(y)\ge d_b/(8K)$, and \emph{light} otherwise; the number $h$ of heavy vertices satisfies
		\[
		h\ \le\ \frac{2M'}{d_b/(8K)}\ \le\ \frac{2\cdot 2d_b\cdot 8K}{d_b}\ =\ 32K.
		\]
		For $x\in A_a$ and $p\ne a$ let $q_p(x):=|Q_p(x)\setminus\{\text{heavy vertices}\}|$ be the \emph{usable pool}, and call the part $p$ \emph{scarce} for $x$ if $q_p(x)\le d_b/4$.
		
		Suppose that for some $x\in A_a$ at most one part is scarce and the sorted usable pools dominate the sorted class sizes as above. We claim $F\subseteq G$. Process the parts $p\ne a$ in nondecreasing order of usable pool, placing in each the class assigned by a dominating injection, always choosing light vertices not yet excluded, where a chosen vertex excludes its non-neighbors from all later pools. Each chosen vertex is light and so excludes fewer than $d_b/(8K)$ vertices; after all $K$ choices at most $d_b/8$ vertices are excluded in total. The first part processed faces no exclusions, and every later part is not scarce, hence retains more than $d_b/4-d_b/8\ge t_{r+1}$ admissible vertices throughout. The classes so obtained are pairwise completely joined, giving $F\subseteq G$, a contradiction. Therefore, for every $x\in A_a$, either \emph{(i)} at least two parts are scarce for $x$, or \emph{(ii)} the domination fails.
		
		In case \emph{(i)}, each of two scarce parts $p$ gives $|Q_p(x)|$ small against the light portion of $A_p$: the vertex $x$ misses at least $(d_p-h)-d_b/4\ge d_b/2$ light vertices of $A_p$, hence at least $d_b$ in total. In case \emph{(ii)} there is a $j$ such that the $j$ smallest usable pools are all below the $j$-th smallest class size $t_{(j)}$, where $t\le t_{(j)}\le t_{r+1}\le K$. If $j=1$, some part $p$ has $q_p(x)\le t-1$, so $x$ misses at least $(d_p-h)-(t-1)\ge d_b-t+1-h$ light vertices of $A_p$. If $j\ge2$, then $x$ misses at least $(d_p-h)-K$ light vertices in each of two parts, hence at least $2(d_b-K-h)\ge d_b$ in total. In every case $x$ is incident with at least $d_b-t+1-h$ missing pairs whose other endpoint is light.
		
		These pairs are distinct for distinct $x$ and disjoint from the pairs incident with heavy vertices, of which there are at least $\frac12\sum_{y\,\mathrm{heavy}}m(y)\ge h\,d_b/(16K)$. Hence
		\[
		M'\ \ge\ d_a\bigl(d_b-t+1-h\bigr)+\frac{h\,d_b}{16K}.
		\]
		If $d_a\le d_b/(32K)$, then $d_ah\le h\,d_b/(32K)\le h\,d_b/(16K)$, so $M'\ge d_a(d_b-t+1)\ge d_a+d_b-2(t-1)$ by the identity of Case~A. If instead $d_a>d_b/(32K)$, then, using $h\le 32K$ and $d_b>C$,
		\[
		M'\ \ge\ d_a\bigl(d_b-t+1-32K\bigr)\ \ge\ \frac{d_ad_b}{2}\ >\ \frac{d_b^2}{64K}\ \ge\ 2d_b\ \ge\ d_a+d_b-2(t-1).
		\]
		Both branches contradict $M'\le B$, and the lemma follows.
	\end{proof}

		\begin{proof}[Proof of Proposition~\ref{prop:multi-upper}]
			Put $\alpha:=1-1/r$ and write
			\[
			h(q):=|E(T_{q,r})|-\left\lfloor\frac qr\right\rfloor+2(t_{\min}-1).
			\]
			For every $q\ge2$,
			\[
			h(q)-h(q-1)=q-1-\left\lfloor\frac qr\right\rfloor.\tag{$\dagger$}
			\]
			Suppose that counterexamples exist for arbitrarily large orders, and choose $n$ beyond all thresholds used below. Let $G$ be a non-$r$-partite $F$-free graph on $n$ vertices with
			\[
			|E(G)|=\mathrm{ex}_{r+1}(n,F)>h(n).
			\]
			In particular, $|E(G)|>|E(T_{n,r})|-n$, so the cleaning results of Section~\ref{sec:def} apply by Remark~\ref{rem:edge-cleaning}. Fix the corresponding constant $\eta>0$, maximum-crossing partition $V(G)=V_1\cup\cdots\cup V_r$, and low-degree set
			\[
			L=\{v:d_G(v)\le(\alpha-3r\eta)n\}.
			\]
			Then $|L|\le\eta n$, $L\ne\varnothing$, and $G-L$ is $r$-partite by Lemmas~\ref{cleanindep} and~\ref{Lne}.
			
			\emph{Step 1: finite descent to a one-vertex obstruction.} Among all sets $S\subseteq L$ for which $H:=G-S$ is non-$r$-partite and satisfies $|E(H)|>h(|V(H)|)$, choose one with $|S|$ maximum, and write $m:=|V(H)|$. Such an $S$ exists because $S=\varnothing$ is admissible. On the other hand, $S\ne L$, since $G-L$ is $r$-partite. Thus we may choose $u\in L\setminus S$.
			
			Since $m\ge n-|L|\ge(1-\eta)n$ and $u\in L$, for sufficiently large $n$ we have
			\[
			\begin{aligned}
				d_H(u)&\le d_G(u)\le(\alpha-3r\eta)n\\
				&\le\alpha m-1
				\le m-1-\left\lfloor\frac mr\right\rfloor
				=h(m)-h(m-1),
			\end{aligned}
			\]
			where the middle inequality follows from $m\ge(1-\eta)n$ and $(3r-\alpha)\eta n\ge1$, while the last equality is ($\dagger$). If $H-u$ were non-$r$-partite, then it would be $F$-free and
			\[
			|E(H-u)|=|E(H)|-d_H(u)>h(m)-\bigl(h(m)-h(m-1)\bigr)=h(m-1),
			\]
			contradicting the maximal choice of $S$. Hence $H-u$ is $r$-partite. This finite descent has an explicit terminal case: after all vertices of $L$ have been deleted, the graph $G-L$ is $r$-partite, so the process must stop before reaching it. Since $m\ge(1-\eta)n$, the order $m$ is still sufficiently large for Lemma~\ref{lem:compensation}.
			
			\emph{Step 2: count.} Since $H$ is non-$r$-partite and $H-u$ is $r$-partite, $\chi(H-u)=r$; otherwise an $(r-1)$-coloring of $H-u$, together with a new color for $u$, would $r$-color $H$. Let $P_1,\ldots,P_r$ be the color classes of $H-u$, and write
			\[
			p_i:=|P_i|,
			\qquad A_i:=N_H(u)\cap P_i,
			\qquad d_i:=|A_i|,
			\qquad M':=\sum_{i<j}p_ip_j-|E(H-u)|.
			\]
			Since $H$ is non-$r$-partite, $d_i\ge1$ for every $i$; otherwise $u$ could be assigned the color of a class with $d_i=0$, giving an $r$-coloring of $H$.
			
			Consider first $r=2$. For $x\in A_1$, the common neighbors of $u$ and $x$ in $P_2$ are exactly $A_2\cap N(x)$, and a copy of $F=K_{1,1,t_{\min}}$ would arise from $t_{\min}$ of them; hence $|A_2\cap N(x)|\le t_{\min}-1$, and symmetrically $|A_1\cap N(y)|\le t_{\min}-1$ for $y\in A_2$. Therefore $|E(A_1,A_2)|\le(t_{\min}-1)\min(d_1,d_2)$, while trivially $|E(A_1,A_2)|\le d_1d_2$, so
			\[
			\begin{aligned}
				|E(H)|&\leq \bigl(p_1p_2-d_1d_2+|E(A_1,A_2)|\bigr)+(d_1+d_2)\\
				&\le p_1p_2+\max_{d_1,d_2\ge1}\bigl(d_1+d_2-d_1d_2+\min\{(t_{\min}-1)\min(d_1,d_2),\,d_1d_2\}\bigr).
			\end{aligned}
			\]
			The maximum equals $2(t_{\min}-1)$, attained at $d_1=d_2=t_{\min}-1$, so $|E(H)|\le p_1p_2+2(t_{\min}-1)\le |E(T_{m-1,2})|+2(t_{\min}-1)=h(m)$, a contradiction.
			
			Now let $r\ge 3$. Set
			\[
			D^\ast_m:=(m-1)-\Bigl\lfloor\frac{m-1}{r}\Bigr\rfloor-\Bigl\lfloor\frac mr\Bigr\rfloor+2(t_{\min}-1),
			\]
			so that $h(m)=|E(T_{m-1,r})|+D^\ast_m$ by ($\dagger$). If $d_H(u)\le D^\ast_m$, then, since $H-u$ is $r$-partite on $m-1$ vertices,
			\[
			|E(H)|=|E(H-u)|+d_H(u)\le |E(T_{m-1,r})|+D^\ast_m=h(m),
			\]
			a contradiction. If instead $d_H(u)>D^\ast_m$, then $|E(H)|>h(m)\ge |E(T_{m,r})|-m$, so Lemma~\ref{lem:compensation}, applied to $H$ with $a$ and $b$ two indices attaining the smallest values among $d_1,\ldots,d_r$, gives $M'\ge d_a+d_b-2(t_{\min}-1)$. Lemma~\ref{lem:reduction} then gives $|E(H)|\le h(m)$, again a contradiction. Therefore no sufficiently large counterexample exists.
		\end{proof}

	\begin{thm}\label{thm:multi}
		Let $r\ge 2$,  $2\leq t_3\leq t_4\leq \ldots \leq t_{r+1}$, and let $F=K_{1,1,t_3,\ldots,t_{r+1}}$ with $t_{\min}=t_3$. For sufficiently large $n$,
		\[
		\mathrm{ex}_{r+1}(n,F)=|E(T_{n,r})|-\left\lfloor\frac nr\right\rfloor+2(t_{\min}-1),
		\]
		and consequently
		\[
		\mathrm{EX}_{r+1,\rho}(n,F)\subseteq \mathrm{EX}_{r+1}(n,F).
		\]
	\end{thm}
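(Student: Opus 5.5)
The theorem is an assembly of results already proved in this section and in Section~\ref{sec:proof}. The plan is first to establish the edge formula from its two halves, and then to feed that formula into the reduction principle.

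For the equality, Proposition~\ref{prop:multi-lower} gives
\[
\mathrm{ex}_{r+1}(n,F)\ge|E(T_{n,r})|-\lfloor n/r\rfloor+2(t_{\min}-1)
\]
for large $n$, witnessed by Construction~\ref{cons:multi}. Proposition~\ref{prop:multi-upper} supplies the matching upper bound. Its proof splits into two cases.
\begin{itemize}
\item For $r=2$, a direct count at the special vertex $u$ suffices.
\item For $r\ge3$, we use the finite descent to a one-vertex obstruction $H$, then the compensation Lemma~\ref{lem:compensation}, and then the counting Lemma~\ref{lem:reduction} together with the balancing inequality of Lemma~\ref{lem:balance}.
\end{itemize}
Both propositions hold for all $n$ beyond a threshold depending only on $F$. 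Taking the larger threshold therefore gives the stated equality for all sufficiently large $n$.

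For the inclusion, we verify the two hypotheses of Theorem~\ref{main} with $s=t_{\min}=t_3\ge2$.
\begin{itemize}
\item $F$ is edge-color-critical with $\chi(F)=r+1\ge3$: deleting the edge between the two singleton parts allows them to share a colour, as recorded at the start of this section.
\item $F$ is $t_3$-embeddable with auxiliary constant $k=t_{r+1}$ by Lemma~\ref{lem:multi-embeddable}.
\item The numerical hypothesis $\mathrm{ex}_{r+1}(n,F)=|E(T_{n,r})|-\lfloor n/r\rfloor+2(s-1)$ is exactly the equality just established.
\end{itemize}
Theorem~\ref{main} then yields $\mathrm{EX}_{r+1,\rho}(n,F)\subseteq\mathrm{EX}_{r+1}(n,F)$ for all sufficiently large $n$.

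The only point needing care is uniformity of thresholds. The constants $\epsilon,\eta$ in Section~\ref{sec:def} must be chosen small enough for Lemma~\ref{lem:balancing} ($3\sqrt\epsilon\le\delta_0(F)$) and for the cleaning used inside Proposition~\ref{prop:multi-upper} via Remark~\ref{rem:edge-cleaning}. Both requirements depend only on $F$, so a single choice works, after which $n$ is taken large. The case $r=2$, where the full parts are vacuous, is already handled inside Lemma~\ref{lem:attach} and Lemma~\ref{lem:balancing}, so it needs no extra argument.

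The substantive obstacle, if any, is the upper bound of Proposition~\ref{prop:multi-upper}, in particular Case~B of Lemma~\ref{lem:compensation}. There the heavy/light bookkeeping must show $M'\ge d_a+d_b-2(t_{\min}-1)$ whenever $u$ has large degree. Since that is already proved, the theorem itself reduces to citing these results in order.
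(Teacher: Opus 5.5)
Your proposal is correct and follows the paper's proof: the equality comes from combining Propositions~\ref{prop:multi-lower} and~\ref{prop:multi-upper}, and the inclusion from applying Theorem~\ref{main} with $s=t_{\min}\ge2$, with the embeddability hypothesis supplied by Lemma~\ref{lem:multi-embeddable}. Your remarks on choosing $\epsilon$ and the thresholds uniformly in terms of $F$ spell out bookkeeping the paper leaves implicit; they do not change the route.
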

	
	\begin{proof}
		The equality is immediate from Propositions~\ref{prop:multi-lower} and~\ref{prop:multi-upper}. In particular the hypothesis of Theorem~\ref{main} holds with $a=2(t_{\min}-1)$, and the inclusion follows.
	\end{proof}

	\section{Concluding Remarks}\label{sec:remark}
	
	A word on the relationship of this work to that of Wang and Zhao~\cite{WCG}. We arrived at the non-$r$-partite problem independently, and the edge extremal count of Section~\ref{sec:multipartite} for $t_3\ge2$ was obtained on our own by the majorization argument given there. While the present paper was in preparation---with the spectral reduction of Section~\ref{sec:proof} still incomplete---the second author was made aware of~\cite{WCG}, which determines the same edge extremal number and, moreover, characterizes the edge extremal graphs. The spectral reduction given here---the containment $\mathrm{EX}_{r+1,\rho}(n,F)\subseteq\mathrm{EX}_{r+1}(n,F)$ itself---is proved by independent, purely spectral means; it rests only on the embeddability condition of Definition~\ref{def:embeddable}, which for the complete multipartite graphs of Section~\ref{sec:multipartite} we verify directly.
	
	Theorem~\ref{main} isolates the source of difficulty in the non-$r$-partite spectral problem: once the edge extremal number has the prescribed correction $2(s-1)$ for an $s$-embeddable graph, the spectral extremal graphs are forced to be edge extremal as well. The remaining content is then combinatorial, an instance of which Theorem~\ref{thm:multi} resolves for complete multipartite forbidden graphs with $t_3\ge2$. There the penalty for being non-$r$-partite is governed by a single local parameter, the smallest non-singleton part: a non-$r$-partite graph must meet every part of its near-Tur\'an structure, and avoiding the forbidden graph then costs two throttled parts rather than one.
	
	The argument yields more than the inclusion. For every $F$ to which Theorem~\ref{main} applies, the spectral extremal graph is unique: it is the balanced complete $r$-partite graph together with one further vertex joined to all of some $r-2$ parts and to exactly $s-1$ vertices in each of the remaining two. This is finer than the edge extremal question, where several graphs may share the maximum edge count. For the complete multipartite family with $t_{\min}=2$, in particular, the edge extremal graphs form a larger family (Wang and Zhao~\cite{WCG}), of which the spectral radius selects exactly one.
	
	The endpoint $t_3=1$ has a genuinely different exactness mechanism. Two singleton neighbors of the exceptional vertex must be retained, while their mutual crossing edge must be deleted. The saturation procedure in Section~\ref{sec:t3-one} produces a $K_{r+1}$-free $Y$-type supergraph; a constant-order spectral gap forces $r-2$ large terminal neighborhood classes, and the Li--Peng theorem identifies the unique balanced equality case as $Y_r(n)$. Thus the endpoint cannot be obtained by formally setting $s=1$ in Theorem~\ref{main}.
	
	Two features of the proof may extend beyond the present setting. Balancing the extremal configuration---showing that the parts are balanced and the throttled parts the smallest---cannot be seen by moving a single vertex and comparing Rayleigh quotients: on the $\Theta(1/n)$ scale that governs the comparison the first-order estimate is inconclusive, and some changes that do raise the spectral radius even register as first-order losses. We instead compare the secular functions of an explicit low-dimensional family of graphs directly. A related obstruction appears at the attachment step, where configurations meeting the edge extremal count exactly leave no first-order margin; there a Temple-type residual inequality converts the residual of the Perron vector into a second-order gain in the spectral radius. Both devices meet the same difficulty---a vanishing first-order signal in a spectral extremal comparison---and appear applicable to other non-$r$-partite spectral problems, in which a bounded local modification competes against a global balancing effect.
	
	Several questions remain. The method requires $F$ to be $s$-embeddable; it would be desirable to weaken this and treat every edge-color-critical $F$ satisfying the edge hypothesis, or to describe exactly which $F$ are $s$-embeddable. This is a genuine restriction: it selects the forbidden graphs for which avoiding a copy costs two throttled parts, and it is not met by every edge-color-critical graph.
	
	A second question concerns the edge hypothesis itself: for which edge-color-critical $F$ does the non-$r$-partite extremal number stay within a bounded additive term of $|E(T_{n,r})|-\lfloor n/r\rfloor$, and when does the precise correction equal $2(s-1)$? Theorem~\ref{main} requires this precise value together with $s$-embeddability. The complete multipartite graphs with exactly two singleton parts satisfy both conditions, while the theta graphs of Fang and Lin satisfy related edge estimates in some ranges but fall outside the present framework, and so remain an interesting test case.
	
	Finally, beyond the present framework one would like to identify the spectral extremal graphs and understand what replaces the bound when the extremal number is farther from $|E(T_{n,r})|-\lfloor n/r\rfloor$.

\end{document}